\newtheorem{dl}{Theorem}[section]
\newtheorem{yl}[dl]{Lemma}
\newtheorem{dy}[dl]{Definition}
\newtheorem{tl}[dl]{Corollary}
\newtheorem{lz}[dl]{Example}
\newtheorem{xinzhi}[dl]{Proposition}
\newtheorem{remark}[dl]{Remark}
\numberwithin{equation}{section}
\newproof{pot1}{Proof of Theorem \ref{mainthnew}}
\newproof{pot2}{Proof of Theorem \ref{mainthnew2}}
\newproof{pot3}{Proof of Theorem \ref{qanalogue}}
\newproof{pot4}{Proof of Theorem \ref{333}}
\def\pf{\noindent {\it Proof.} }
\begin{document}
%\begin{frontmatter}
\title{Further study on elliptic interpolation formulas  for the elliptic Askey-Wilson polynomials and allied identities}
\author{Jin Wang\fnref{fn3,fn4}}
\fntext[fn3]{This  work was supported by NSF of Zhejiang Province (Grant~No.~LQ20A010004).}
\fntext[fn4]{E-mail address: jinwang@zjnu.edu.cn}
\address[Canada]{Department of Mathematics, Zhejiang Normal  University,
Jinhua 321004,~P.~R.~China}
\author{Xinrong Ma\fnref{fn1,fn2}}
\fntext[fn1]{This  work was supported by NSFC grant No. 11971341.}
\fntext[fn2]{E-mail address: xrma@suda.edu.cn.}
\address[P.R.China]{Department of Mathematics, Soochow University, Suzhou 215006, P.R.China}
\begin{abstract}
In this paper,  we introduce the so-called elliptic Askey-Wilson polynomials which are homogeneous polynomials  in two special theta functions. With regard to the significance  of polynomials of such kind, we establish some general elliptic interpolation formulas  by the methods of matrix inversions and of polynomial representations. Furthermore,  we find that the basis of elliptic interpolation space due to Schlosser can be uniquely characterized via the elliptic Askey-Wilson polynomials.  As applications of these elliptic interpolation formulas, we establish some new elliptic function identities, including   an extension of Weierstrass' theta identity, a generalized elliptic Karlsson-Minton type identity, and an elliptic analogue  of Gasper's summation formula for very-well-poised ${}_{6+2m}\phi_{5+2m}$ series.
\end{abstract}
\begin{keyword} elliptic hypergeometric series; elliptic Askey-Wilson polynomial;  theta function; triple product; elliptic interpolation; basis;  matrix inversion; symmetric difference; polynomial representation; summation and transformation; Weierstrass' theta identity.
\vspace{5pt}
\\
{\sl AMS subject classification 2010}: Primary  33D15; Secondary 33E05, 41A05
\end{keyword}
%\end{frontmatter}
\maketitle
\thispagestyle{empty}
%%%%%%%%\setcounter{page}{0}%%%%%%%%%%%%
%\markboth{X. R. Ma}{A new series expansion formula and its applications}
%%%%%%%%%%%%%%%%%%%%%%%%%%%%%%%%%%%%%%%%%%%%%%%%%%%%%%%%%%%%%%
%\tableofcontents
%%%%%%%%%%%%%%%%%%%%%%%%%%%%%%%%%%%%%%%%%%%%%%%%%%%%%%%%%%%%%%
%\vspace{20pt}
\parskip 7pt
\section{Introduction}
%%%%%%%%%%%%%%%%%%%%%%%%%%%%%%%%%%%%%%%%%%%%%%%%%%%%%%%%%%%%%%%%%%%%%%%
Throughout this paper, we will adopt the standard notation and terminology for basic and elliptic
hypergeometric series  found in the book  \cite{10} by Gasper and Rahman. For instance, the $p$-shifted factorial with $|p|<1$ is defined by
\[(x;p)_\infty:
\:=\:\prod_{n=0}^{\infty}(1-xp^n) \quad\text{and}\quad
(x;p)_n:\:=\:\frac{(x;p)_\infty}{(p^nx;p)_\infty}
\]
for any integer $n$. Its multi-parameter form is compactly abbreviated to
\[(x_1,x_2,\ldots,x_m;p)_n:\:=\:\prod_{k=1}^m(x_k;p)_n.\]
We also need  the modified Jacobi  theta function with
 argument $x\neq 0$ and norm $p$
\begin{align}
\theta(x;p):=(x,p/x;p)_\infty.\label{jacobintheta-def}
\end{align}
The well known Jacobi triple product identity (cf. \cite[(II.28)]{10}) asserts that
\begin{align}
\theta(x;p)=\frac{1}{(p;p)_\infty}\sum_{n=-\infty}^\infty (-1)^np^{n(n-1)/2}x^n.\label{jacobintheta}
\end{align}
By convention, the multi-parameter notation
\[\theta(x_1,x_2,\ldots,x_m;p):\:=\:\prod_{k=1}^m\theta(x_k;p).\]
We also adopt the notation
$$(x;q,p)_n:=\prod_{k=0}^{n-1}\theta(xq^k;p)$$
for the theta $q,p$-shifted factorial (cf. \cite[Eq. (11.2.5)]{10}) together with
\[(x_1,x_2,\ldots,x_m;q,p)_n:\:=\:\prod_{k=1}^m(x_k;q,p)_n.\]
 An ${}_{r+1}E _{r}$ theta   hypergeometric series with base $q$, norm $p$ and  argument $x$   is defined  to be
\begin{align}
{}_{r+1}E _{r}\left[\begin{matrix}a_{1},a_{2},\dots ,a_{r+1}\\ b_{1},b_{2},\dots ,b_{r}\end{matrix}
; q,p; x\right]:=\sum _{n=0} ^{\infty }
\frac{(a_{1},a_2,\ldots
,a_{r+1};q,p)_n}{(q,b_{1},b_2,\ldots,b_{r};q,p)_n}x^n.\label{fpfbasic-1}
\end{align}
Further, if $
a_1q=a_2b_1=a_3b_2=\ldots=a_{r+1}b_{r}$ and $$a_2=qa_1^{1/2},a_3=-qa_1^{1/2},a_4=q(a_1/p)^{1/2},a_5=-q(a_1/p)^{1/2},$$
the ${}_{r+1}E _{r}$ is called \emph{very-well-poised} (VWP).
In particular, when the norm $p=0$ in \eqref{fpfbasic-1} , the ${}_{r+1}E _{r}$ reduces to the usual   basic hypergeometric
 VWP ${}_{r+1}\phi _{r}$ series. Moreover, the ${}_{r+1}V _{r}$ VWP \emph{elliptic} hypergeometric series  (cf. \cite[Eq. (11.2.19)]{10}) is defined to be
\begin{align}
{}_{r+1}V _{r}(a_1;a_6,\ldots,a_{r+1};q,p;x):=\sum _{n=0} ^{\infty }\frac{\theta(a_1q^{2n};p)}{\theta(a_1;p)}
\frac{(a_{1},a_6,\ldots
,a_{r+1};q,p)_n(qx)^n}{(q,a_{1}q/a_6,\ldots,a_{1}q/a_{r+1};q,p)_n}.\label{fpfbasic}
\end{align}
When $x=1$, we denote such series in shorthand notation $${}_{r+1}V _{r}(a_1;a_6,\ldots,a_{r+1};q,p).$$
The ${}_{r+1}V _{r}$ is elliptically balanced if and only if $(qa_6a_7\cdots a_{r+1})^2=(a_1q)^{r-5}.$
%%%%%%%%%%%%%%%%%%%%%%%%%%%%%%%%%%%%%%%%%%%%%%%%%%%%%%%%%%%%%%%%%%%%%%%%%%

As it turns out, function expansions \cite{history} and polynomial interpolations \cite{gas} are two rather old and fundamental subjects in Approximation Theory and Numerical
Analysis. Both  have also received great attention in the context of  various (ordinary, basic, elliptic) hypergeometric series \cite{annaby,sahoo,cooper,ismailadded0,ismail,ito,schlosseradd,schlosser} so far. In this regard,  Ismail \cite{ismailadded0} established a $q$-Taylor theorem  expanding of functions in terms of the Askey-Wilson monomial basis.

\begin{dl}[\mbox{\rm Cf. \cite[Theorem 1.3]{ismailadded0}}]\label{ismail-standon} If $f(x)$ is a polynomial of degree $N$, then
\begin{align}
f(x)=\sum_{k=0}^{N}f_k\phi_k(x;a),\label{1-1}
\end{align}
where
\begin{align*}
f_k&:=\frac{(q-1)^k}{(2a)^k(q;q)_k}q^{-k(k-1)/4}
(\mathcal{D}_{q}^{(k)}f)(x_k),~~ x_k:=(aq^{k/2}+q^{-k/2}/a)/2.
\end{align*}
In the above, the Askey-Wilson monomials $\phi_k(x;a)$ are defined by
$$
\phi_k(x;a):=(ae^{i\theta},ae^{-i\theta};q)_k, \qquad x=\cos \theta
$$
and the Askey-Wilson operator is defined to be
\begin{align*}
\big(\mathcal{D}_{q}f\big)(x):=\frac{\breve{f}\big(q^{1/2}e^{i\theta}\big)-\breve{f}\big(q^{-1/2}e^{i\theta}\big)}
{\iota\big(q^{1/2}e^{i\theta}\big)-\iota\big(q^{-1/2}e^{i\theta}\big)},
\end{align*}
where $\iota(t):=(t+1/t)/2, \breve{f}(t):=f(\iota(t)).$ As usual, for integer $k\geq 1,
\mathcal{D}_{q}^{(k)}=\mathcal{D}_{q}\big(\mathcal{D}_{q}^{(k-1)}\big), $ $\mathcal{D}_{q}^{(1)}=\mathcal{D}_{q}.$
\end{dl}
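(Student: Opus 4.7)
The plan is to exploit the fact that $\{\phi_k(x;a)\}_{k\geq 0}$ forms a basis of the polynomial ring in $x$, and then to single out the coefficients by repeatedly applying the Askey-Wilson operator $\mathcal{D}_q$ at a cleverly chosen evaluation point. I would begin by observing that $\phi_k(x;a)=(ae^{i\theta},ae^{-i\theta};q)_k$ is, when rewritten in $x=(e^{i\theta}+e^{-i\theta})/2$, a polynomial of degree exactly $k$ (its leading term comes from the product $(-a)^{2k}e^{ik\theta}e^{-ik\theta}$ times lower-order pieces in $x$). Therefore $\{\phi_k(x;a)\}_{k=0}^{N}$ spans the space of polynomials of degree $\leq N$, and any polynomial $f$ of degree $N$ admits a unique expansion $f(x)=\sum_{k=0}^{N} c_k \phi_k(x;a)$. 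The task reduces to identifying $c_k$ with the asserted formula.

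The core computation is the following \emph{lowering relation}: for every $n\geq 1$,
\begin{align*}
\mathcal{D}_q \phi_n(x;a)\;=\;-\frac{2a(1-q^n)}{1-q}\,\phi_{n-1}(x;aq^{1/2}).
\end{align*}
I would derive this directly from the definition of $\mathcal{D}_q$: with $\breve{\phi}_n(t;a)=(at,a/t;q)_n$, the numerator $\breve{\phi}_n(q^{1/2}t;a)-\breve{\phi}_n(q^{-1/2}t;a)$ can be factored by pulling out the common factor $(aq^{1/2}t,aq^{1/2}/t;q)_{n-1}=\breve{\phi}_{n-1}(t;aq^{1/2})$ from both terms; what remains in the numerator collapses, via the identity $(1-aq^{n-1/2}t)(1-aq^{-1/2}/t)-(1-aq^{-1/2}t)(1-aq^{n-1/2}/t)$, to a multiple of $(t-1/t)$ which cancels the denominator $\iota(q^{1/2}t)-\iota(q^{-1/2}t)=\tfrac{1}{2}(q^{1/2}-q^{-1/2})(t-1/t)$. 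A routine simplification yields the constant above.

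Iterating this relation $k$ times (each application shifts $a\mapsto aq^{1/2}$) gives
\begin{align*}
\mathcal{D}_q^{(k)} \phi_n(x;a)\;=\;\prod_{j=0}^{k-1}\!\left(-\frac{2aq^{j/2}(1-q^{n-j})}{1-q}\right)\phi_{n-k}(x;aq^{k/2}),
\end{align*}
which vanishes identically for $n<k$. Now I would evaluate both sides at $x=x_k=(aq^{k/2}+q^{-k/2}/a)/2$. Setting $t=aq^{k/2}$ in $\breve{\phi}_{n-k}(t;aq^{k/2})=(a^2q^k,1;q)_{n-k}$, the factor $(1;q)_{n-k}$ vanishes whenever $n>k$; hence only the $n=k$ term of the expansion survives, and $\phi_0=1$. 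Collecting the prefactor gives $(\mathcal{D}_q^{(k)} f)(x_k)=c_k\cdot(-1)^k(2a)^k(q;q)_k(1-q)^{-k}q^{k(k-1)/4}$, and solving for $c_k$ recovers exactly the claimed formula. The main technical obstacle is the careful bookkeeping in verifying the lowering relation, since the telescoping in the numerator and the cancellation with the denominator $\iota(q^{1/2}t)-\iota(q^{-1/2}t)$ is where all the $q$-powers have to be tracked correctly; the rest of the argument is a clean induction and a well-chosen evaluation.
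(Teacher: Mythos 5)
This theorem is quoted in the paper as background (cited to Ismail's original article) and is not proved there, so there is nothing in the paper to compare against; judged on its own, your argument is correct and is essentially Ismail's standard proof. The lowering relation $\mathcal{D}_q\phi_n(x;a)=-\tfrac{2a(1-q^n)}{1-q}\phi_{n-1}(x;aq^{1/2})$ checks out (the leftover numerator factor is $a(q^{-1/2}-q^{n-1/2})(t-1/t)$ against the denominator $\tfrac12(q^{1/2}-q^{-1/2})(t-1/t)$), the iterate produces exactly $(-1)^k(2a)^kq^{k(k-1)/4}(q;q)_k(1-q)^{-k}$, and the evaluation $t=aq^{k/2}$ kills all terms with $n>k$ via the factor $(1;q)_{n-k}$, so solving for $c_k$ gives precisely the stated $f_k$. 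The only blemish is your parenthetical justification that $\deg\phi_k=k$: the clean reason is that each quadratic factor equals $1-2aq^jx+a^2q^{2j}$, linear in $x$ with nonzero leading coefficient $-2aq^j$, so the product has degree exactly $k$.
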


We should remark that it is this theorem by which  Ismail \cite{ismailadded0} and Stanton \cite{ismailadded2} presented  a unified approach to some basic results from basic hypergeometric series such as the $q$-Pfaff-Saalschutz ${}_3\phi_2$  \cite[(II.12)]{10} and Jackson's VWP ${}_8\phi_7$ summation formulas \cite[(II.22)]{10}, Sears' ${}_4\phi_3$  \cite[(III.15)]{10} and Watson's ${}_8\phi_7$ series transformations \cite[(III.17)]{10}, while some new proofs were given in \cite{ismail} by Ismail  and Simeonov. Also, by the same theorem, Cooper \cite{cooper} proved Watson's  VWP ${}_6\phi_5$ summation formula \cite[(II.20)]{10}. In their paper \cite{ismailadded1} published in 2003, Ismail and Stanton  extended the above polynomial $q$-Taylor theorem to one for entire functions of exponential growth. After that, by combining the above  $q$-Taylor theorem with the idea of polynomial interpolations,  they  \cite{ismailadded2} further put forward  a Lagrange-type interpolation formula for polynomials. The meaning of interpolation is that any polynomial $f(x)$ of degree $N$ is completely determined by its evaluation at the $N+1$ special (interpolation) points, say $\iota(x_k), k=0,1,\ldots, N.$

\begin{dl} [\mbox{\rm Cf. \cite[Theorem 3.4]{ismailadded2}}] With the same notation as above.
For polynomial $f(x)$ of degree at most $N$ and with $x=\cos\theta$, we have the expansion
\begin{align}
\frac{(q,a^2q;q)_N}{(aqe^{i\theta},aqe^{-i\theta};q)_N}f(x)=\sum_{k=0}^N\frac{1-a^2q^{2k}}{1-a^2}
\frac{(a^2,ae^{i\theta},ae^{-i\theta},q^{-N};q)_k}{(q,aqe^{i\theta},aqe^{-i\theta},a^2q^{N+1};q)_k}q^{k(1+N)}f(\iota(aq^k)).
\label{ismailformula}
\end{align}
\end{dl}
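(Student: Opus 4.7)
The plan is to recast \eqref{ismailformula} as an identity between polynomials of degree at most $N$ in $x=\cos\theta$ after clearing the denominator. Note that
\[
(aqe^{i\theta},aqe^{-i\theta};q)_N=\prod_{k=1}^N(1-aq^ke^{i\theta})(1-aq^ke^{-i\theta})
\]
is a polynomial of degree $N$ in $x$ with simple zeros at $x=\iota(aq^k)$, $k=1,\ldots,N$. Multiplying both sides of \eqref{ismailformula} by this factor, the left-hand side becomes $(q,a^2q;q)_N f(x)$, while the $k$th summand on the right becomes $c_k\,f(\iota(aq^k))\,P_k(x)$, where
\[
P_k(x):=\phi_k(x;a)\,\phi_{N-k}(x;aq^{k+1})
\quad\text{and}\quad
c_k:=\frac{1-a^2q^{2k}}{1-a^2}\frac{(a^2,q^{-N};q)_k}{(q,a^2q^{N+1};q)_k}q^{k(1+N)}.
\]
Since each $P_k$ has degree $N$ in $x$, both sides are now polynomials of degree at most $N$.

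The decisive observation is that $P_k$ vanishes precisely at the $N$ nodes $x=\iota(aq^j)$ with $j\in\{0,\ldots,N\}\setminus\{k\}$: the factor $\phi_k(x;a)$ kills the nodes $j=0,1,\ldots,k-1$, while $\phi_{N-k}(x;aq^{k+1})$ kills $j=k+1,\ldots,N$. Hence evaluating the cleared right-hand side at $x=\iota(aq^m)$ picks out only the $k=m$ term; taking $e^{i\theta}=aq^m$ gives $P_m(\iota(aq^m))=(a^2q^m,q^{-m};q)_m\,(a^2q^{2m+1},q;q)_{N-m}$. The whole theorem therefore reduces to the scalar identity
\[
(q,a^2q;q)_N = c_m\,(a^2q^m,q^{-m};q)_m\,(a^2q^{2m+1},q;q)_{N-m},\qquad m=0,1,\ldots,N.
\]
Once this is verified, both sides of the cleared \eqref{ismailformula} are polynomials of degree at most $N$ agreeing at the $N+1$ distinct points $\iota(a),\iota(aq),\ldots,\iota(aq^N)$, hence coincide.

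The remaining step, and the main obstacle in the sense of bookkeeping, is to verify the displayed scalar identity. It is a routine $q$-Pochhammer exercise: one uses $(1-a^2q^{2m})/(1-a^2)=(a^2q;q)_{2m}/(a^2;q)_{2m}$ together with $(a^2;q)_{2m}=(a^2;q)_m(a^2q^m;q)_m$ and $(a^2q;q)_{2m}(a^2q^{2m+1};q)_{N-m}=(a^2q;q)_{N+m}=(a^2q;q)_N(a^2q^{N+1};q)_m$ to isolate the factor $(a^2q;q)_N$; then the elementary evaluations $(q^{-m};q)_m=(-1)^m q^{-\binom{m+1}{2}}(q;q)_m$ and $(q^{-N};q)_m(q;q)_{N-m}=(-1)^m q^{-mN+\binom{m}{2}}(q;q)_N$ absorb the signs and collapse all residual $q$-powers. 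Notably, this approach avoids any appeal to a higher summation theorem such as Jackson's ${}_8\phi_7$ sum: the Lagrange-interpolation structure, combined with the fact that products of Askey--Wilson monomials factor the interpolation nodes, does the entire job.
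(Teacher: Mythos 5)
Your argument is correct, and I verified the scalar identity at its core: after the telescoping $\frac{1-a^2q^{2m}}{1-a^2}(a^2;q)_m(a^2q^m;q)_m=(a^2q;q)_{2m}$ and $(a^2q;q)_{2m}(a^2q^{2m+1};q)_{N-m}=(a^2q;q)_N(a^2q^{N+1};q)_m$, the residual powers of $q$ do cancel exactly, so $(q,a^2q;q)_N=c_m P_m(\iota(aq^m))$ holds for each $m$. The only point worth stating explicitly is that the $N+1$ nodes $\iota(aq^j)$ are pairwise distinct only for generic $a,q$ (one needs $q^{j-i}\neq 1$ and $a^2q^{i+j}\neq 1$ for $0\le i<j\le N$), after which the identity extends by rationality in $a$. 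However, your route is genuinely different from the paper's. The paper does not prove this statement at all: it is quoted verbatim from Ismail and Stanton, whose own derivation goes through the $q$-Taylor theorem for the Askey--Wilson operator. The paper's machinery for the analogous formulas (Theorem \ref{mainthm} and its elliptic relatives, whose $p=0$ case recovers \eqref{ismailformula}) is two-staged: first an expansion in a Newton-type basis obtained from the $(f,g)$-matrix inversion (Lemmas \ref{polybasis}--\ref{polybasistwo} transported by the substitution \eqref{transformation-new}), then a specialization of the nodes to a geometric progression and an appeal to the Frenkel--Turaev ${}_{10}V_9$ summation (whose $p=0$ case is Jackson's ${}_8\phi_7$ sum) to collapse an inner sum. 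Your proof instead clears denominators and observes that $\phi_k(x;a)\,\phi_{N-k}(x;aq^{k+1})$ is already a cardinal (Lagrange) basis for the nodes $\iota(aq^j)$, reducing everything to one Pochhammer evaluation; this is more elementary, requiring neither the Askey--Wilson operator nor any ${}_8\phi_7$-level summation theorem. What the paper's heavier route buys is uniformity: the same inversion-plus-Frenkel--Turaev template lifts verbatim to the elliptic level and to the multi-parameter generalizations (Theorems \ref{generialized} and \ref{karlsson-type}), where one must in any case argue inside the finite-dimensional space $\mathcal{L}_N(P(x),Q(x))$ rather than the ring of ordinary polynomials.
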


One of keys to \eqref{ismailformula} is, as  pointed out in \cite[Section 12.2]{ismailbook} by Ismail and Stanton, that the set $\{\phi_k(x;a)|0\leq k\leq N\}$ forms a basis for the vector space of polynomials, named the {\sl Askey-Wilson basis}.
%================================================================
With regard to studying elliptic hypergeometric series,    Schlosser  \cite{schlosseradd} extended Ismail's $q$-Taylor theorem to the elliptic analogues. One of his main results can be restated as follows.

\begin{dl}[\mbox{\rm Cf. \cite[Theorem 4.2]{schlosseradd}}]\label{schlosser-yoo} Let  $W_c^N$ be the  linear space   spanned by  the set
\begin{align}\bigg\{\frac{g_k(x)}{(cx,c/x;q,p)_k}\bigg\}_{k=0}^N,\label{2-2-22-1206}
\end{align}
where, for integer $k: 0\leq k\leq N,$
\begin{align} \label{2-2-22}
\left\{
   \begin{array}{l}
     g_k(x)=g_k(1/x),\\
    g_k(px)=\displaystyle\frac{1}{p^kx^{2k}}g_k(x).
   \end{array}
 \right.
\end{align}
 Then, for any $f(x)\in W_c^N$,  we have the expansion
\begin{align}
f(x)=\sum_{k=0}^{N}f_k\frac{(ax,a/x;q,p)_k}{(cx,c/x;q,p)_k},\label{2-2}
\end{align}
where
\begin{align}
f_k:=\frac{(-1)^kq^{-k(k-1)/4}\theta(q;p)^k}{(2a)^k(q,c/a,acq^{k-1};q,p)}
\big(\mathcal{D}_{c,q,p}^{(k)}f\big)(aq^{k/2})\nonumber
\end{align}
and the well-poised elliptic Askey-Wilson operator $\mathcal{D}_{c,q,p}$  is defined by
\begin{align*}
\big(\mathcal{D}_{c,q,p}f\big)(x):=2q^{1/2}x\frac{\theta(cxq^{-1/2},cxq^{1/2},
cq^{-1/2}/x,cq^{1/2}/x;p)}{\theta(q,x^2;p)}\big(f(q^{1/2}x)-f(q^{-1/2}x)
\big).
\end{align*}
\end{dl}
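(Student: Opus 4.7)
The plan is to mimic Ismail's proof of Theorem \ref{ismail-standon} in the elliptic setting, with the elliptic well-poised operator $\mathcal{D}_{c,q,p}$ replacing the ordinary Askey--Wilson operator $\mathcal{D}_q$. The two main ingredients are: (i) the elliptic Askey--Wilson functions
$$\Phi_k(x) := \frac{(ax, a/x; q, p)_k}{(cx, c/x; q, p)_k}, \qquad 0 \le k \le N,$$
lie in $W_c^N$ and form a basis of it, and (ii) $\mathcal{D}_{c,q,p}$ sends $\Phi_k$ to an explicit constant multiple of a parameter-shifted copy of $\Phi_{k-1}$.

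For (i), I first verify that $g_k(x) := (ax, a/x; q, p)_k$ satisfies the two conditions in \eqref{2-2-22}. Symmetry $g_k(x) = g_k(1/x)$ is immediate. The quasi-periodicity follows from the two elementary theta identities
$$\theta(py; p) = -\theta(y; p)/y, \qquad \theta(y/p; p) = -(y/p)\,\theta(y; p).$$
Applied to each factor of $g_k(px) = \prod_{j=0}^{k-1}\theta(apxq^j;p)\,\theta(aq^j/(px);p)$, these produce a multiplier $1/(px^2)$ per factor, whose $k$-fold product yields the required $1/(p^kx^{2k})$. Hence $\Phi_k \in W_c^N$ for $k = 0, \ldots, N$. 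Linear independence of $\{\Phi_k\}_{k=0}^N$ is a triangularity argument: since $(ax, a/x;q,p)_k$ vanishes at $x = a, aq, \ldots, aq^{k-1}$, evaluating a putative linear relation successively at $x = aq^j$ for $j = 0, 1, \dots, N$ forces every coefficient to vanish. Because $W_c^N$ is spanned by $N+1$ functions, $\{\Phi_k\}_{k=0}^N$ is indeed a basis.

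For (ii), the crucial computation is the shift rule
$$\mathcal{D}_{c,q,p}\Phi_k(x) = C_k\, \frac{(aq^{1/2}x, aq^{1/2}/x; q, p)_{k-1}}{(cq^{-1/2}x, cq^{-1/2}/x; q, p)_{k-1}},$$
for an explicit prefactor $C_k$ depending on $a, c, q, k$. This should follow from a direct manipulation of the theta products coming from the definition of $\mathcal{D}_{c,q,p}\Phi_k(x)$, invoking the Weierstrass-type three-term theta identity to expose a telescoping cancellation between numerator and denominator. Iterating the shift rule $k$ times and evaluating at $x = aq^{k/2}$, we find that $\mathcal{D}_{c,q,p}^{(k)}\Phi_j(aq^{k/2})$ acquires the factor $(1; q, p)_{j-k}$, which vanishes for $j > k$ because $\theta(1;p) = 0$, while $\mathcal{D}_{c,q,p}^{(k)}\Phi_j \equiv 0$ for $j < k$ because $\mathcal{D}_{c,q,p}$ annihilates constants. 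Hence only the $k$-th term of $f = \sum_{j=0}^N f_j \Phi_j$ survives when $\mathcal{D}_{c,q,p}^{(k)}$ is evaluated at $aq^{k/2}$, and solving for $f_k$ yields the stated formula.

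The main obstacle is the precise determination of $C_k$ and the verification that the parameter shifts $a \mapsto aq^{1/2}$ and $c \mapsto cq^{-1/2}$ close up under iteration. Since $\mathcal{D}_{c,q,p}$ involves four well-poised theta factors, the telescoping with the numerator and denominator theta products of $\Phi_k$ must be arranged so that the residual pieces cleanly reassemble into the shifted Askey--Wilson function on the right. Once this shift rule is in place, the remaining bookkeeping to match Schlosser's prefactor is routine.
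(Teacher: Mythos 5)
First, a point of orientation: the paper does not prove this statement. Theorem \ref{schlosser-yoo} is quoted verbatim from Schlosser \cite[Theorem 4.2]{schlosseradd} as background, and the basis property of $\big\{(ax,a/x;q,p)_k/(cx,c/x;q,p)_k\big\}_{k=0}^N$ that you invoke is likewise only cited there (from \cite[Lemma 4.1]{schlosseradd}). So there is no internal proof to compare against; what you have written is in effect a reconstruction of Schlosser's original argument, and its architecture --- basis, lowering operator, evaluation at $x=aq^{k/2}$, killing the terms $j>k$ via $(1;q,p)_{j-k}=0$ and the terms $j<k$ via annihilation of constants --- is the right one.

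The genuine gap is in your step (ii), which is the entire analytic content of the theorem and which you both misstate and leave unproved. Carrying out the cancellation you describe: in $\Phi_k(q^{1/2}x)-\Phi_k(q^{-1/2}x)$ the common numerator factor is $(aq^{1/2}x,aq^{1/2}/x;q,p)_{k-1}$ and the common denominator factor is $(cq^{1/2}x,cq^{1/2}/x;q,p)_{k-1}$, with leftover denominator thetas $\theta(cq^{k-1/2}x,cq^{-1/2}/x;p)$ and $\theta(cq^{-1/2}x,cq^{k-1/2}/x;p)$. After the Weierstrass step supplies $\theta(x^2;p)$ to cancel the $\theta(x^2;p)$ in the prefactor, the four factors $\theta(cxq^{\pm1/2},cq^{\pm1/2}/x;p)$ of the prefactor cancel $\theta(cq^{-1/2}x,cq^{-1/2}/x;p)$ and the leading ($i=0$) factors of $(cq^{1/2}x,cq^{1/2}/x;q,p)_{k-1}$, while the leftover $\theta(cq^{k-1/2}x,cq^{k-1/2}/x;p)$ reattaches at the tail. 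The net denominator is $(cq^{3/2}x,cq^{3/2}/x;q,p)_{k-1}$: the shift is $c\mapsto cq^{3/2}$, not $c\mapsto cq^{-1/2}$. This is also forced by the stated coefficient $f_k$: iterating $a\mapsto aq^{1/2}$, $c\mapsto cq^{3/2}$ on a function of degree $k-j+1$ at step $j$ produces exactly the constants $(c/a;q,p)_k\,(acq^{k-1};q,p)_k\,(q;q,p)_k$ appearing in $f_k$'s denominator, whereas your shift would not. The error does not destroy the vanishing argument (which uses only the $a$-shift and the degree drop), but the shift rule together with its constant $C_k$ is precisely the non-routine computation here, and ``should follow from a direct manipulation\ldots invoking the Weierstrass-type three-term theta identity'' is where the proof actually lives; as written, the proposal asserts the key lemma in a form that is false and does not establish it.
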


 Just like   the \emph{Askey-Wilson basis} which is crucial to Ismail's expansion theorem,  the set
$$
\bigg\{\frac{(ax,a/x;q,p)_k}{(cx,c/x;q,p)_k}\bigg\}_{k=0}^N
$$
is proved to be a basis of $W_c^N$  in \cite[Lemma 4.1]{schlosseradd} by Schlosser and plays an important role in Theorem \ref{schlosser-yoo}.
As further applications of his expansion theorem, Schlosser and Yoo \cite{schlosser} successfully  extended Ismail and Stanton's interpolation formula \eqref{ismailformula} to the following

\begin{dl} [\mbox{\rm Cf. \cite[Theorem 2.6]{schlosser}}]\label{schlosserthm}
If $f(x)$ is in $W_c^N$, then
\begin{align}
\frac{(q,a^2q,cx,c/x;q,p)_N}{(ac,c/a,aqx,aq/x;q,p)_N}f(x)\label{schlosseryoo}\\
=\sum_{k=0}^Nq^k\frac{\theta(a^2q^{2k};p)}{\theta(a^2;p)}
\frac{(a^2,aq/c,ax,a/x,acq^{N},q^{-N};q,p)_k}{(q,ac,aqx,aq/x,aq^{1-N}/c,a^2q^{N+1};q,p)_k}f(aq^k).\nonumber
\end{align}
\end{dl}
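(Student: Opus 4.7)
The plan is to reduce Theorem \ref{schlosserthm} to a single instance of the Frenkel-Turaev ${}_{10}V_9$ summation formula by combining Schlosser's expansion theorem with linearity. Both sides of \eqref{schlosseryoo} depend linearly on $f$, and by \cite[Lemma 4.1]{schlosseradd} the set
\[
B_m(x) := \frac{(ax,a/x;q,p)_m}{(cx,c/x;q,p)_m}, \qquad m = 0,1,\ldots,N,
\]
is a basis of the interpolation space $W_c^N$. Hence it suffices to verify \eqref{schlosseryoo} when $f = B_m$ for each $m = 0, 1, \ldots, N$, and then extend to arbitrary $f \in W_c^N$ by linearity.

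For such $f = B_m$, one computes $B_m(aq^k) = (a^2q^k, q^{-k};q,p)_m / (acq^k,(c/a)q^{-k};q,p)_m$, which vanishes for $0 \leq k < m$ because $(q^{-k};q,p)_m$ then contains the factor $\theta(1;p) = 0$. The sum on the right-hand side of \eqref{schlosseryoo} therefore collapses to indices $k \geq m$, and the shift $k \mapsto k + m$ together with the splitting $(\alpha;q,p)_{k+m} = (\alpha;q,p)_m \, (\alpha q^m;q,p)_k$ recasts the truncated sum as a terminating, very-well-poised, elliptically balanced ${}_{10}V_9$ series in $k$ of length $N-m+1$.

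The key step is then to evaluate this ${}_{10}V_9$ series by the Frenkel-Turaev summation formula \cite[\S 11.4]{10}. In the base case $m = 0$ the right-hand side is literally ${}_{10}V_9(a^2; aq/c, ax, a/x, acq^N, q^{-N};q,p)$, whose six numerator parameters satisfy the required balancing $(q\,a_6 a_7 a_8 a_9 a_{10})^2 = (a^2 q)^4$. Applying Frenkel-Turaev produces four factors of the form $(q^{1-N}/\alpha;q,p)_N$, which by the elliptic reflection identity
\[
(q^{1-N}/\alpha;q,p)_N = (-1)^N q^{-\binom{N}{2}} \alpha^{-N} (\alpha;q,p)_N
\]
rearrange into $(cx, c/x, ac, c/a;q,p)_N$; the signs, powers of $q$, and powers of $c$ then cancel to reproduce exactly the left-hand prefactor of \eqref{schlosseryoo}.

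The principal obstacle is extending this base-case computation to general $m$: after the parameter shift $\alpha \mapsto \alpha q^m$ that absorbs the factor $(a^2, aq/c, ax, a/x, acq^N, q^{-N};q,p)_m$, one must verify that the shifted six parameters still satisfy the elliptic balancing condition, apply Frenkel-Turaev once more, and check that the resulting product combines with the factor $B_m(x)$ multiplying the left-hand prefactor to yield an identity. The bookkeeping of reflection-induced signs and $q$-powers throughout this procedure is delicate but routine. An alternative line of attack, more in line with the matrix-inversion theme highlighted in the abstract, is to expand $f(x) = \sum_{k=0}^N f_k B_k(x)$ via Theorem \ref{schlosser-yoo}, evaluate at $x = aq^j$ to obtain the lower-triangular system $f(aq^j) = \sum_{k \leq j} f_k B_k(aq^j)$, invert it by an appropriate elliptic Bailey-type matrix inversion, and substitute the resulting expressions for $f_k$ back into \eqref{2-2}.
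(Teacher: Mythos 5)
Your proposal is essentially correct, but be aware that the paper itself offers no proof of Theorem \ref{schlosserthm}: it is quoted as background from Schlosser and Yoo \cite[Theorem~2.6]{schlosser}, so there is no internal argument to compare against. Judged on its own merits, your reduction is legitimate and complete in outline. Both sides of \eqref{schlosseryoo} are linear in $f$, and \cite[Lemma~4.1]{schlosseradd} guarantees that the functions $B_m$ span $W_c^N$, so checking the basis elements suffices. Your vanishing claim is correct ($(q^{-k};q,p)_m$ contains the factor $\theta(1;p)=0$ when $k<m$), and after the shift $k\mapsto k+m$ the sum is indeed a terminating, elliptically balanced ${}_{10}V_9$ with base $a^2q^{2m}$ and upper parameters $aq/c$, $axq^m$, $aq^m/x$, $acq^{N+m}$, $q^{-(N-m)}$; Lemma \ref{lemma28} evaluates it as $(a^2q^{2m+1},cxq^m,cq^m/x,q;q,p)_{N-m}/(acq^{2m},axq^{m+1},aq^{m+1}/x,c/a;q,p)_{N-m}$, whose $x$-dependent factors combine with the $x$-dependent part of the $j$-free prefactor to give exactly the $x$-dependence of the left side times $B_m(x)$, leaving only an $x$-free constant to verify. (In the base case $m=0$, using the form of Frenkel--Turaev stated in Lemma \ref{lemma28} the four factors come out directly as $q,cx,c/x,a^2q$ over $ac,aqx,aq/x,c/a$, so the reflection step you describe is not even needed.) The bookkeeping you defer is genuinely routine. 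For comparison, the analogous results the paper does prove, Theorems \ref{mainthm} and \ref{generialized}, proceed by a different route: $f$ is first expanded in a Newton-type elliptic interpolation basis at the nodes $Cq^k$ via the matrix-inversion results (Theorems \ref{mainthmyl} and \ref{mainthmchenfu}), and Frenkel--Turaev is then used to evaluate the resulting inner sums in closed form. Your primary approach is more elementary and tailored to this one identity, whereas the paper's machinery yields a family of such formulas at once; your suggested alternative (expand by Theorem \ref{schlosser-yoo}, evaluate at $x=aq^j$, invert the triangular system) is essentially Schlosser and Yoo's own derivation and is the one closest in spirit to the paper's matrix-inversion theme.
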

In regard to applications of polynomial interpolations to $q$-series, one might not ignore a series of research works \cite{nankaipaper-0,nankaipaper,fu} by Chen, Fu and Lascoux. Indeed, as a discrete analogue of the aforementioned interpolation formulas, it is  proved in \cite{nankaipaper} by  Chen and Fu  that
\begin{dl}[\mbox{\rm Cf. \cite[Theorem 1.1]{nankaipaper}}]\label{mainthm-chen} Let  $N\geq 0$ be  integer and $f(x)$  a theta function satisfying $$f(x)=(px^2/c)^{N}f(c/(px)).$$ Then we have
\begin{align}f(x)=
\sum_{k=0}^{N}C_k
\prod_{i=1}^{k}\theta(x/b_i,c/(b_ix);p)\prod_{i=1}^{N-k}\theta(x/x_i,c/(x_ix);p),\label{4}
\end{align}
where
\begin{align}
C_k:=\frac{f(b_1)}{\prod_{i=1}^{N-k+1}\theta(b_1/x_i,c/(x_ib_1);p)}\delta_{1(b)}\delta_{2(b)}
\cdots\delta_{k(b)}\label{coeffi-old}\\
\times\theta(b_{k+1}/x_{N-k+1},c/(x_{N-k+1}b_{k+1});p).\nonumber
\end{align}
\end{dl}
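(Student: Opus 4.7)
The plan is to prove Theorem 1.4 by first establishing that the space $V_N$ of theta functions satisfying $f(x)=(px^2/c)^N f(c/(px))$ has dimension $N+1$, then showing that the $N+1$ summands appearing on the right-hand side of \eqref{4} form a basis of $V_N$, and finally identifying the coefficients $C_k$ via a triangular linear system obtained from evaluating both sides of \eqref{4} at the points $b_1,b_2,\ldots,b_{N+1}$. The base case $N=0$ reduces to the tautology $f(x)=C_0$.

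First I would check membership in $V_N$. Writing $B_k(x):=\prod_{i=1}^{k}\theta(x/b_i,c/(b_ix);p)\prod_{i=1}^{N-k}\theta(x/x_i,c/(x_ix);p)$, it suffices to verify the symmetry on a single factor $\theta(y/\alpha,c/(\alpha y);p)$, and this follows from $\theta(py;p)=-y^{-1}\theta(y;p)$ together with its dual $\theta(y/p;p)=-(y/p)\theta(y;p)$ by a direct computation. That $\dim V_N=N+1$ is a standard consequence of the theory of theta functions on the underlying elliptic curve, equivalently a Fourier series count built on the Jacobi triple product \eqref{jacobintheta}.

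To show $\{B_k\}_{k=0}^N$ is a basis of $V_N$, note that $B_k$ vanishes at $x=b_1,\ldots,b_k$ because of the factors $\theta(x/b_i,c/(b_ix);p)$ combined with $\theta(1;p)=0$, while it is generically nonzero at $x=b_{k+1}$. Evaluating an arbitrary relation $\sum_{k}\alpha_k B_k=0$ successively at $b_1,b_2,\ldots,b_{N+1}$ produces a lower triangular linear system whose diagonal entries $B_k(b_{k+1})$ are nonzero products of theta functions at non-coincident arguments, forcing each $\alpha_k=0$. Now writing any $f\in V_N$ as $f=\sum_{k=0}^N C_k B_k$ and applying the same evaluation scheme, $f(b_1)=C_0 B_0(b_1)$ immediately yields
\[
C_0=\frac{f(b_1)}{\prod_{i=1}^{N}\theta(b_1/x_i,c/(x_ib_1);p)},
\]
which matches the $k=0$ case of \eqref{coeffi-old}. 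For $k\geq 1$, the recursion $C_k B_k(b_{k+1})=f(b_{k+1})-\sum_{j<k}C_j B_j(b_{k+1})$ expresses $C_k$ as a nested combination of $f(b_1),\ldots,f(b_{k+1})$, and this combination is precisely what the operators $\delta_{1(b)}\cdots\delta_{k(b)}$ in the statement are designed to encode.

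The main obstacle I anticipate lies in this last unwinding step: turning the explicit recursion for $C_k$ into the compact closed form of \eqref{coeffi-old} requires identifying a delicate cancellation pattern among the many theta factors of the form $\theta(b_j/x_i,c/(x_ib_j);p)$, and is most cleanly carried out by an auxiliary induction showing that the partial sum $\sum_{j<k}C_j B_j(b_{k+1})$ simplifies telescopically. The remaining ingredients—the symmetry check, the dimension count, and the triangular linear system itself—are routine.
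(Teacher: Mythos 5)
This statement is not proved in the paper at all: Theorem \ref{mainthm-chen} is quoted verbatim from Chen and Fu \cite{nankaipaper} as background, and the paper's own contribution in this direction is the differently packaged Theorem \ref{mainthmchenfu}, proved by entirely different means (Lemma \ref{polybasistwo} plus the substitution \eqref{transformation-new}). So there is no internal proof to compare against; your skeleton is, in spirit, the Newton-type interpolation argument of the original source. The sound parts of your plan are the membership check (each factor $\theta(x/\alpha,c/(\alpha x);p)$ picks up the multiplier $px^2/c$ under $x\mapsto c/(px)$, and each $B_k$ has exactly $N$ such factors), the vanishing $B_k(b_j)=0$ for $j\le k$ via $\theta(1;p)=0$, and the resulting triangular system.

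The genuine gap is the one you flag yourself: the passage from the recursion $C_kB_k(b_{k+1})=f(b_{k+1})-\sum_{j<k}C_jB_j(b_{k+1})$ to the closed form \eqref{coeffi-old} is asserted (``precisely what the operators are designed to encode'') rather than proved, and that identification \emph{is} the content of the theorem --- existence and uniqueness of some expansion $f=\sum_kC_kB_k$ is the easy part. You verify the claim only at $k=0$. Already at $k=1$ one must check that applying $\delta_{1(b)}$ to $f(b_1)/\prod_{i=1}^{N}\theta(b_1/x_i,c/(x_ib_1);p)$ and multiplying by $\theta(b_2/x_N,c/(x_Nb_2);p)$ reproduces $\big(f(b_2)-C_0B_0(b_2)\big)/B_1(b_2)$; this does work out after cancelling $\prod_{i=1}^{N-1}\theta(b_2/x_i,c/(x_ib_2);p)$, but the general step requires an induction on $k$ tracking how each new $\delta_{k(b)}$ both swaps $b_k\leftrightarrow b_{k+1}$ and shortens the $x_i$-product from length $N-k+1$ to $N-k$, i.e.\ a Leibniz-type computation with the $\delta$ operators in Lascoux's calculus. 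Without it the coefficient formula is unproved. Two smaller defects: the dimension count $\dim V_N=N+1$ does not follow from the displayed functional equation alone, which is a mere involution symmetry cutting out an infinite-dimensional space of holomorphic functions on $\mathbb{C}^*$; you must also invoke the quasi-periodicity $f(px)=(c/(px^2))^{N}f(x)$ hidden in the words ``theta function''. And the nonvanishing of the diagonal entries $B_k(b_{k+1})$ needs explicit genericity hypotheses on the $b_i$ and $x_i$ (no two arguments congruent modulo $p^{\mathbb{Z}}$), which should be stated rather than waved at.
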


Recall that the divided difference operator $\delta_{i(a)}$  acting on the left of function $f$ in variables $\{a_n\}_{n\geq 1}$  is defined by
$$
f(\ldots,a_i,a_{i+1},\ldots)\delta_{i(a)}=
\frac{f(\ldots,a_{i+1},a_i,\ldots)-f(\ldots,a_i,a_{i+1},\ldots)}{\theta(a_{i+1}/a_i,c/(a_ia_{i+1});p)}.
$$
A full treatise on the operator $\delta_{n(\bullet)}$ acting on symmetric functions and applications to rational interpolation can be found in \cite{lascoux} due  to Lascoux.
%===========================================================

Recently, by means of the $(f,g)$-inversion formula, one of the authors set up in  \cite{wangjinpaper} that

\begin{dl}[{\rm Cf. \cite[Theorem 1.6]{wangjinpaper}}]\label{mainthmyl} Define
\begin{align}\label{PQdef}
  P(x):=\theta(-x^2;p^2)(-p;p)_\infty, ~~~  Q(x):=x\,\theta(-px^2;p^2)(-p;p)_\infty.
\end{align}
Then,  for arbitrary  polynomial $\sum_{k=0}^N\lambda_kx^k$ of degree at most $N$, we have
\begin{align}\sum_{k=0}^{N}\lambda_kP(x)^{k}Q(x)^{N-k}&=
x^N\sum_{k=0}^{N}H_k(N)b_k\theta(x_kb_k,x_k/b_k;p)\label{13}\\
&\quad\times
\prod_{i=0}^{k-1}\theta(b_ix,b_i/x;p)\prod_{i=k+1}^{N}\theta(x_ix,x_i/x;p),\nonumber
\end{align}
where the coefficients
\begin{align}
H_n(N)&:=\sum_{k=0}^{n}\frac{1}{b_k^{N+1}}\frac{\sum_{j=0}^{N}\lambda_j P(b_k)^jQ(b_k)^{N-j}} {\prod_{i=n}^{N}\theta(x_ib_k,x_i/b_k;p)\prod_{i=0,i\neq k}^{n}\theta(b_ib_k,b_i/b_k;p)}.\label{coeffi}
\end{align}
\end{dl}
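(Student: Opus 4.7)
My plan is to follow the matrix-inversion strategy advertised by the citation of \cite{wangjinpaper}: specialize the claimed identity at $x=b_m$ for $m=0,1,\ldots,N$ to turn it into a lower-triangular linear system in the unknowns $H_k(N)$, and then invert this system by an elliptic $(f,g)$-inversion.

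\emph{Step 1 (reduction to a single space).} By linearity in $(\lambda_0,\ldots,\lambda_N)$, it suffices to expand each monomial $P(x)^mQ(x)^{N-m}$, $0\le m\le N$, in the proposed basis $\{x^N\phi_k(x)\}_{k=0}^N$, where
\[
\phi_k(x):=b_k\,\theta(x_kb_k,x_k/b_k;p)\prod_{i=0}^{k-1}\theta(b_ix,b_i/x;p)\prod_{i=k+1}^{N}\theta(x_ix,x_i/x;p).
\]
Using the elementary quasi-periodicities $\theta(px;p)=-\theta(x;p)/x$ and $\theta(1/x;p)=-\theta(x;p)/x$ (together with the analogous relations with norm $p^2$ coming from \eqref{jacobintheta}), I would verify that $P(1/x)=P(x)/x^2=P(px)$ and $Q(1/x)=Q(x)/x^2=Q(px)$, and that $\theta(ax,a/x;p)$ is invariant under $x\mapsto 1/x$ and transforms by $(px^2)^{-1}$ under $x\mapsto px$. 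This places both $x^{-N}P(x)^mQ(x)^{N-m}$ and every $\phi_k(x)$ inside the $(N+1)$-dimensional space $V_N$ of theta functions $g$ satisfying $g(1/x)=g(x)$ and $g(px)=(p^Nx^{2N})^{-1}g(x)$; for generic $b_i,x_i$ the $\phi_k$ are linearly independent (by the triangular vanishing argument in Step 2), so they form a basis of $V_N$ and the coefficients $H_k(N)$ exist and are unique.

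\emph{Step 2 (triangular evaluation).} Next I would specialize \eqref{13} to $x=b_m$. The key observation is that $\phi_k(b_m)=0$ whenever $k>m$, since in that range the product $\prod_{i=0}^{k-1}\theta(b_ib_m,b_i/b_m;p)$ contains the vanishing factor $\theta(b_m/b_m;p)=\theta(1;p)=0$. Thus the $N+1$ evaluations collapse to the lower-triangular system
\[
\sum_{j=0}^N\lambda_j P(b_m)^jQ(b_m)^{N-j}=b_m^N\sum_{k=0}^{m}H_k(N)\,\phi_k(b_m),\qquad 0\le m\le N.
\]

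\emph{Step 3 (inversion and main obstacle).} Solving this triangular system for $H_n(N)$ and matching against \eqref{coeffi}, the entire proof reduces to the orthogonality identity
\[
\sum_{j=m}^{n}\frac{\phi_m(b_j)}{b_j\prod_{i=n}^{N}\theta(x_ib_j,x_i/b_j;p)\prod_{i=0,\,i\ne j}^{n}\theta(b_ib_j,b_i/b_j;p)}=\delta_{nm},\qquad 0\le m\le n\le N.
\]
Verifying this purely theta-functional identity, which amounts to an elliptic Lagrange-type partial fraction, is the main obstacle. I would handle it by invoking the elliptic $(f,g)$-inversion of \cite{wangjinpaper} with kernel $f(x,y)=\theta(xy,x/y;p)$, whose Weierstrass three-term relation is precisely what certifies that the triangular matrix $[\phi_m(b_j)]_{m,j}$ and its conjectured inverse are indeed mutually reciprocal. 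Once the orthogonality is secured, substitution into the triangular system yields \eqref{coeffi} verbatim, and the expansion \eqref{13} follows on the full space $V_N$ because both sides agree at $N+1$ distinct points $b_0,\ldots,b_N$ of a space of dimension $N+1$.
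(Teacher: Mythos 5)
Your strategy is genuinely different from the one the paper (and the cited source \cite{wangjinpaper}) uses, and the difference is exactly where your argument has a gap. The paper's route is: first prove the purely polynomial expansion of Lemma \ref{polybasis}, whose coefficients come from the $(f,g)$-expansion formula with the trivial kernel $f(x,y)=g(x,y)=x-y$ (so the orthogonality reduces to $(x-a)(b-c)+(x-b)(c-a)+(x-c)(a-b)=0$), and then push it through the substitution $x\mapsto P(x)/Q(x)$, $b_i\mapsto P(b_i)/Q(b_i)$, $x_i\mapsto P(x_i)/Q(x_i)$ via Lemma \ref{lemma2}, $y\,\theta(xy,x/y;p)=P(x)Q(y)-P(y)Q(x)$ --- this is precisely how Theorem \ref{mainthmchenfu} is proved in the present paper. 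On that route the existence of the expansion for every element of $\mathcal{L}_N(P(x),Q(x))$ is automatic, inherited from the polynomial identity, and no facts about spaces of theta functions are needed. You instead work directly in the theta world, which is a legitimate and arguably more conceptual route, but it shifts the burden onto a statement you do not prove.

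The gap: you declare $V_N$ to be ``$(N+1)$-dimensional'' without proof or citation, and your conclusion hinges on it. The triangular-vanishing argument only gives linear independence of the $\phi_k$, i.e.\ $\dim V_N\ge N+1$; to know that $x^{-N}P(x)^mQ(x)^{N-m}$ lies in the \emph{span} of the $\phi_k$ --- equivalently, that agreement at the $N+1$ points $b_0,\ldots,b_N$ forces equality --- you need the upper bound $\dim V_N\le N+1$. That is a true but nontrivial fact about $BC_1$/$D_N$ theta functions (essentially \cite[Lemma 4.1]{schlosseradd}; cf.\ \cite{17,20}), proved by a Laurent-coefficient or zero-counting argument, and without it the closing sentence of Step 3 is circular. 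Two smaller points: the antisymmetric kernel certifying your orthogonality is $g(x,y)=y\,\theta(xy,x/y;p)=P(x)Q(y)-P(y)Q(x)$, not $\theta(xy,x/y;p)$, which satisfies $g(y,x)=-(y/x)\,g(x,y)$ rather than $g(y,x)=-g(x,y)$; and your orthogonality matrices only coincide with the $(f,g)$-inversion pair of Lemma \ref{dl2} after conjugation by the $j$-dependent diagonal factor $\prod_{i=1}^{N}\theta(x_ib_j,x_i/b_j;p)$, a verification you defer to ``invoking'' the inversion. Both of these are repairable, but as written the dimension count in Step 1 and the deferred matching in Step 3 are where the real work is hiding.
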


Especially noteworthy is that one of the most interesting results in \cite{wangjinpaper} reveals  a surprising fact: Weierstrass' theta identity {\rm(cf. \cite[Exercise 2.16(i)]{10})}
\begin{align}
\theta(xa,x/a,bc,b/c;p)-\theta(xc,x/c,ab,b/a;p)=\frac{b}{a}
\theta(xb,x/b,ac,a/c;p)\label{weierstrass}
\end{align}
is equivalent to the almost self-evident algebraic identity
\begin{align}
(x-a)(b-c)+(x-b)(c-a)+(x-c)(a-b)=0.\label{tripleknown}
\end{align}
We refer the reader  to  Koornwinder's paper \cite{koornwinder} for the history and applications of Weierstrass' theta identity to the theory of theta functions.

 Before stating our main theorems, we first need to introduce a new kind of polynomials.
\begin{dy}\label{PQcondition} Let $P(x)$ and $Q(x)$ be given by \eqref{PQdef}.
Any homogeneous polynomial in $P(x)$ and $Q(x)$ of degree $N$ in the form
\begin{align}
\sum_{k=0}^{N}\lambda_kP(x)^{k}Q(x)^{N-k}\label{condition}
\end{align}
is called an elliptic Askey-Wilson polynomial of degree $N$. For brevity, we will use the notation
  $\mathcal{L}_N(P(x),Q(x))$ to denote the set of all homogeneous polynomials in $P(x)$ and $Q(x)$ of degree $N$.
\end{dy}
\begin{remark}\label{remark1.8} The reason why we call \eqref{condition} the elliptic Askey-Wilson polynomial is that
\begin{align}
(ax,a/x;q,p)_k=\prod_{i=0}^{k-1}\theta(aq^ix,aq^i/x;p)
=\frac{1}{x^k}\prod_{i=0}^{k-1}\big(P(aq^i)Q(x)-Q(aq^i)P(x)
\big).\label{pqdecomp}
\end{align}
See Lemma \ref{lemma2} below for the second equality. The special case $p=0$ leads us  to
the Askey-Wilson monomials
$
\phi_k(x;a)=(ax,a/x;q)_k.
$
\end{remark}
In the sense of Definition \ref{PQcondition},  it is easily seen that
\begin{xinzhi}\label{closed} For any integers $m,n\geq 0$, if $f(x)\in\mathcal{L}_{m}(P(x),Q(x))$, $g(x)\in\mathcal{L}_{n}(P(x),Q(x))$, then $f(x)g(x)\in\mathcal{L}_{m+n}(P(x),Q(x)).$
\end{xinzhi}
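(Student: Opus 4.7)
The plan is to prove this by a direct expansion, working straight from Definition \ref{PQcondition}. Since $\mathcal{L}_N(P(x),Q(x))$ is defined as the set of homogeneous polynomials of total degree $N$ in the two ``variables'' $P(x)$ and $Q(x)$, the statement is really the familiar fact that the product of two homogeneous forms of degrees $m$ and $n$ in two indeterminates is homogeneous of degree $m+n$. The only subtlety worth noting is that $P(x)$ and $Q(x)$ are specific theta-type functions of $x$, so one should argue within the algebra of functions rather than within a polynomial ring in two abstract indeterminates.

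First I would write out the two elements explicitly. By hypothesis there exist scalars $\lambda_0,\ldots,\lambda_m$ and $\mu_0,\ldots,\mu_n$ with
\begin{align*}
f(x)=\sum_{k=0}^{m}\lambda_k\,P(x)^{k}Q(x)^{m-k},\qquad g(x)=\sum_{j=0}^{n}\mu_j\,P(x)^{j}Q(x)^{n-j}.
\end{align*}
Next I would multiply these out and collect:
\begin{align*}
f(x)g(x)=\sum_{k=0}^{m}\sum_{j=0}^{n}\lambda_k\mu_j\,P(x)^{k+j}Q(x)^{(m+n)-(k+j)}.
\end{align*}
Then I would re-index by setting $i=k+j$, so that $i$ runs from $0$ to $m+n$, obtaining
\begin{align*}
f(x)g(x)=\sum_{i=0}^{m+n}\nu_i\,P(x)^{i}Q(x)^{(m+n)-i},\qquad \nu_i:=\sum_{\substack{k+j=i\\ 0\leq k\leq m,\,0\leq j\leq n}}\lambda_k\mu_j.
\end{align*}
This expression matches the shape \eqref{condition} for degree $m+n$, so $f(x)g(x)\in\mathcal{L}_{m+n}(P(x),Q(x))$.

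There is essentially no obstacle here: the argument uses nothing beyond the definition and the distributive law. I would not even need to invoke properties of $P(x)$ and $Q(x)$ (such as the theta-function identities that will be important later in the paper); the closure of $\bigoplus_N\mathcal{L}_N(P(x),Q(x))$ under multiplication holds as a formal statement about homogeneous expressions in any two functions. Consequently the proof can be presented in a couple of lines, and the proposition is best viewed as a housekeeping remark that legitimizes treating $\bigoplus_{N\ge 0}\mathcal{L}_N(P(x),Q(x))$ as a graded algebra of elliptic Askey--Wilson polynomials.
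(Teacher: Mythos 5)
Your proof is correct and is exactly the routine expansion that the paper has in mind when it states the proposition with the phrase ``it is easily seen that'' and omits any written proof. Nothing further is needed.
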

The main purpose of this paper is, as further development of \cite{wangjinpaper}, to establish a few new interpolation formulas for any elliptic Askey-Wilson polynomials. At first, we can show a new interpolation formula which can be regarded as an explicit version of  Theorem \ref{mainthm-chen} of Chen and Fu.

\begin{dl}\label{mainthmchenfu}For any integer $N\geq 0$, let $\{x_n\}_{n\geq 0}$ and $\{b_n\}_{n\geq 0}$ be two sequences such that $x_k\neq b_i, b_i\neq b_j, 0\leq i\neq j,k\leq N$.  For any  $f(x)\in \mathcal{L}_{N}(P(x),Q(x))$,   we have the expansion
\begin{align}f(x)=x^N
\sum_{k=0}^{N}b_kH_k(N)\prod_{i=0}^{k-1}\theta(b_ix,b_i/x;p)\prod_{i=1}^{N-k}
\theta(x_ix,x_i/x;p),\label{mathdlchenfu}
\end{align}
where, for any $n\leq N$, the coefficient
\begin{align}
H_n(N)&=\sum_{k=0}^{n}\frac{f(b_k)}{b_k^{N+1}}\frac{\theta(x_{N-n+1}b_n,x_{N-n+1}/b_n;p)}
{\theta(x_{N-n+1}b_k,x_{N-n+1}/b_k;p)}\nonumber\\
&\qquad\times\prod_{i=1}^{N-n}\frac{1}{\theta(x_ib_k,x_i/b_k;p)}\prod_{i=0,i\neq k}^{n}\frac{1} {\theta(b_ib_k,b_i/b_k;p)}.\label{coeffi-chenfu-new}
\end{align}
 \end{dl}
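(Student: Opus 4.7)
The plan is to derive the formula by specializing and reindexing Theorem \ref{mainthmyl}, which is already available in the excerpt. Since $f(x)\in\mathcal{L}_N(P(x),Q(x))$, Definition \ref{PQcondition} produces scalars $\lambda_0,\ldots,\lambda_N$ with $f(x)=\sum_{j=0}^N\lambda_j P(x)^j Q(x)^{N-j}$, whence $\sum_{j=0}^N\lambda_j P(b_k)^j Q(b_k)^{N-j}=f(b_k)$. Plugging this identity back into \eqref{coeffi} replaces the polynomial-coefficient expression by an expression in the sample values $f(b_k)$.

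Next, I would reverse the ordering of the interpolation nodes $\{x_n\}$. Because Theorem \ref{mainthmyl} is valid for an arbitrary sequence, I can substitute $x_i\mapsto x_{N+1-i}$. Under this renaming, the product $\prod_{i=k+1}^N\theta(x_ix,x_i/x;p)$ in \eqref{13} becomes $\prod_{j=1}^{N-k}\theta(x_jx,x_j/x;p)$, which is exactly the second product in \eqref{mathdlchenfu}, and the isolated factor $\theta(x_kb_k,x_k/b_k;p)$ becomes $\theta(x_{N+1-k}b_k,x_{N+1-k}/b_k;p)$. Since this factor depends only on $b_k$ and not on the running variable $x$, I would absorb it into the $k$-th summation coefficient, which produces the basis expansion \eqref{mathdlchenfu} with a renormalized coefficient $H_k(N)$.

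Finally, I would verify that this absorption produces the stated formula \eqref{coeffi-chenfu-new}. The renamed denominator product $\prod_{i=n}^N\theta(x_ib_k,x_i/b_k;p)$ in \eqref{coeffi} becomes $\prod_{j=1}^{N+1-n}\theta(x_jb_k,x_j/b_k;p)$. Pulling out its single factor $\theta(x_{N+1-n}b_k,x_{N+1-n}/b_k;p)$ and combining it with the absorbed numerator factor $\theta(x_{N+1-n}b_n,x_{N+1-n}/b_n;p)$ gives the ratio occurring in \eqref{coeffi-chenfu-new}, while the leftover $\prod_{j=1}^{N-n}\theta(x_jb_k,x_j/b_k;p)$ is precisely the remaining product there. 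The symmetric-in-$b$ factor $\prod_{i=0,i\neq k}^n\theta(b_ib_k,b_i/b_k;p)$ is unaffected by the whole manoeuvre, and the overall prefactor $b_k/b_k^{N+1}$ persists, matching \eqref{mathdlchenfu} and \eqref{coeffi-chenfu-new}.

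There is no substantive obstacle, since the argument is in essence a reparameterization of Theorem \ref{mainthmyl} together with a repackaging of the coefficients in terms of the sampled values $f(b_k)$. The only subtle point is the boundary case $n=0$, in which the index $N-n+1=N+1$ lies outside the nominal range of the nodes $x_1,\ldots,x_N$ used in \eqref{mathdlchenfu}; however, the sum defining $H_0(N)$ contains only the single term $k=0$, so the ratio $\theta(x_{N+1}b_0,x_{N+1}/b_0;p)/\theta(x_{N+1}b_k,x_{N+1}/b_k;p)$ equals $1$ trivially, and the formula is well-defined in terms of $x_1,\ldots,x_N$ alone, as stated.
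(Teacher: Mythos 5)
Your argument is correct, and it reaches the theorem by a genuinely different route than the paper. The paper does not derive Theorem \ref{mainthmchenfu} from Theorem \ref{mainthmyl}; instead it first proves a second polynomial expansion lemma (Lemma \ref{polybasistwo}, with basis $\prod_{i=0}^{k-1}(b_i-x)\prod_{i=1}^{N-k}(x_i-x)$), computing the coefficients by inverting a lower-triangular linear system and solving the resulting recurrence for the inverse-matrix entries by induction, and then lifts that polynomial identity to the theta-function setting via the substitution \eqref{transformation-new} and Lemma \ref{lemma2} --- i.e.\ it runs the same two-step machinery used for Theorem \ref{mainthmyl} over again on a new basis. Your observation is that this is unnecessary: reversing the nodes $x_i\mapsto x_{N+1-i}$ in \eqref{13} turns $\prod_{i=k+1}^{N}\theta(x_ix,x_i/x;p)$ into $\prod_{i=1}^{N-k}\theta(x_ix,x_i/x;p)$, and absorbing the $x$-independent factor $\theta(x_{N-k+1}b_k,x_{N-k+1}/b_k;p)$ into the coefficient converts \eqref{coeffi} exactly into \eqref{coeffi-chenfu-new} (the index bookkeeping you give --- $\prod_{i=n}^{N}\mapsto\prod_{j=1}^{N+1-n}$, then peeling off the $j=N+1-n$ factor --- checks out). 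Your handling of the $n=0$ boundary, where the phantom node $x_{N+1}$ cancels inside the single-term sum, is also the right thing to say; one should just add that the auxiliary node must be chosen distinct from the $b_j$ so that the intermediate identity is defined, which is harmless since it is generic and drops out. What each approach buys: yours exposes that Theorem \ref{mainthmchenfu} is essentially a renormalized relabeling of Theorem \ref{mainthmyl} (indeed the same relabeling turns Lemma \ref{polybasis} into Lemma \ref{polybasistwo}), at the cost of treating Theorem \ref{mainthmyl} as a black box; the paper's proof is longer but self-contained and illustrates the general ``choose a polynomial basis, invert the triangular system, then transform'' template that the authors advertise as applicable to other bases.
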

Closely related with Theorem \ref{mainthmyl} and Weierstrass' theta identity \eqref{weierstrass} is the following   Lagrange-type interpolation formula.
\begin{dl}\label{mainthmyl-latest} With the same assumption as Theorem \ref{mainthmchenfu}. Then for any $f(x)\in \mathcal{L}_{N}(P(x),Q(x))$, we have
\begin{align}
\frac{f(x)}{x^N}=\sum_{k=0}^{N}\frac{f(b_k)}{b_k^N}\prod_{i=0,i\neq k}^{N}\frac{\theta(b_ix,b_i/x;p)}{\theta(b_ib_k,b_i/b_k;p)}\label{13-13}.
\end{align}
\end{dl}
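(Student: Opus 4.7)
The plan is to reduce \eqref{13-13} to an instance of classical Lagrange interpolation inside the finite-dimensional space $\mathcal{L}_N(P(x),Q(x))$, by exhibiting the dual basis adapted to the nodes $b_0,\ldots,b_N$. Concretely, I would introduce the candidate Lagrange basis
\[
L_k(x) := x^N \prod_{\substack{i=0\\ i\neq k}}^{N}
\frac{\theta(b_i x, b_i/x; p)}{\theta(b_i b_k, b_i/b_k; p)}, \qquad 0 \leq k \leq N,
\]
and verify two properties: each $L_k$ lies in $\mathcal{L}_N(P(x),Q(x))$, and $L_k(b_j)=b_k^N\delta_{j,k}$.

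For the first property, the decisive input is the bilinearization recorded in Remark~\ref{remark1.8} (whose underlying identity is Lemma \ref{lemma2}), namely $\theta(b_i x,b_i/x;p)=(P(b_i)Q(x)-Q(b_i)P(x))/x$. Substituting this into the definition of $L_k$ cancels the overall factor $x^N$ and rewrites $L_k(x)$ as a product of $N$ linear forms in $P(x),Q(x)$, which by Proposition~\ref{closed} lies in $\mathcal{L}_N(P(x),Q(x))$. For the Kronecker property, setting $x=b_j$ with $j\neq k$ forces the factor at $i=j$ to contain $\theta(b_j^2,1;p)=0$ (using $\theta(1;p)=0$), and setting $x=b_k$ produces term-by-term cancellation of numerator and denominator, leaving just the prefactor $b_k^N$.

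With these two facts in hand, the $L_k$'s are $N+1$ linearly independent elements of $\mathcal{L}_N(P(x),Q(x))$, a space of dimension at most $N+1$ by Definition~\ref{PQcondition}; so $\{L_k\}_{k=0}^{N}$ is a basis. Expanding $f(x)=\sum_{k=0}^{N}c_k L_k(x)$ and evaluating at $x=b_j$ gives $f(b_j)=c_j b_j^N$, whence $c_j=f(b_j)/b_j^N$. Substituting back and dividing through by $x^N$ yields \eqref{13-13}. I do not foresee a genuine obstacle: the only delicate point is the tacit genericity of $\{b_k\}$ that keeps every denominator $\theta(b_i b_k,b_i/b_k;p)$ nonzero (consistent with the stated hypothesis $b_i\neq b_j$), and essentially all of the work is offloaded to the Remark \ref{remark1.8} factorization.
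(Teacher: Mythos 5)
Your proposal is correct, but it proceeds differently from the paper. The paper does not build the elliptic Lagrange basis directly: it first recovers the classical polynomial Lagrange formula \eqref{assume-new2} by taking the limit $x_n\to b_n$ in the coefficient formula \eqref{wangcoeff} of Lemma \ref{polybasis} (itself a product of the $(f,g)$-inversion machinery), and then transports that polynomial identity to the elliptic setting via the substitution \eqref{transformation-new} together with Lemma \ref{lemma2}, finally renaming $Q(x)^N f(P(x)/Q(x))$ as $f(x)$. You instead work entirely inside $\mathcal{L}_N(P(x),Q(x))$: you verify that the functions $L_k$ lie in that space (again via Lemma \ref{lemma2}), check the Kronecker property $L_k(b_j)=b_k^N\delta_{j,k}$ using $\theta(1;p)=0$, and close the argument with a dimension count, since $\mathcal{L}_N$ is spanned by the $N+1$ monomials $P(x)^kQ(x)^{N-k}$. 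Your route is more self-contained and elementary — it needs neither the $(f,g)$-inversion, nor Lemma \ref{polybasis}, nor the limiting procedure — whereas the paper's route exhibits the theorem as the confluent case $x_i=b_i$ of Theorem \ref{mainthmyl} and thus fits it into the unified transformation framework the authors are promoting. Both arguments share the same tacit genericity assumption (all $\theta(b_ib_k,b_i/b_k;p)\neq 0$, $b_k\neq 0$), which you correctly flag and which is already implicit in the statement of the theorem itself.
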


Furthermore,  using  Theorem  \ref{mainthmchenfu}, we can establish
\begin{dl}\label{mainthm} For any $f(x)\in \mathcal{L}_{N}(P(x),Q(x))$, we have the expansion
 \begin{align}\bigg(\frac{C}{x}\bigg)^N\frac{(q,C^2q;q,p)_{N}}
 {(Cxq,Cq/x;q,p)_{N}}f(x)=\sum_{k=0}^{N}q^{k}\frac{\theta(C^2q^{2k};p)} {\theta(C^2;p)}\frac{(C^2,C/x,Cx,q^{-N};q,p)_{k}} {(q,Cxq,Cq/x,C^2q^{N+1};q,p)_{k}}f\big(Cq^k\big).\label{xe33-3}
\end{align}
%\begin{align}\bigg(\frac{B}{x}\bigg)^N\frac{(q,B^2q;q,p)_{N}}{(Bxq,Bq/x;q,p)_{N}}f(x)
%=\sum_{k=0}^{N}q^{k}\frac{\theta(B^2q^{2k};p)}
%{\theta(B^2;p)}\frac{(B^2,B/x,Bx,q^{-N};q,p)_{k}}
%{(q,Bxq,Bq/x,B^2q^{N+1};q,p)_{k}} f(Bq^k).\label{wangjinfind}
%\end{align}
\end{dl}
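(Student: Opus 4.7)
The plan is to specialize the Lagrange-type interpolation of Theorem \ref{mainthmyl-latest} (a direct consequence of Theorem \ref{mainthmchenfu}) to the geometric grid $b_k = Cq^k$, $k = 0, 1, \ldots, N$, and then translate every theta product that appears into elliptic $q,p$-shifted factorials. Because $f \in \mathcal{L}_N(P(x), Q(x))$, Theorem \ref{mainthmyl-latest} immediately yields
\begin{equation*}
\frac{f(x)}{x^N} = \sum_{k=0}^{N} \frac{f(Cq^k)}{(Cq^k)^N} \prod_{i=0,\,i \neq k}^{N} \frac{\theta(Cq^i x, Cq^i/x; p)}{\theta(C^2 q^{i+k}, q^{i-k}; p)},
\end{equation*}
after which the argument is purely algebraic simplification.

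Three of the four products collapse at once from the definition $(a;q,p)_n = \prod_{j=0}^{n-1} \theta(a q^j; p)$, giving
\begin{equation*}
\prod_{i=0,\,i \neq k}^{N} \theta(Cq^i x; p) = \frac{(Cx; q, p)_{N+1}}{\theta(Cq^k x; p)}, \qquad \prod_{i=0,\,i \neq k}^{N} \theta(C^2 q^{i+k}; p) = \frac{(C^2; q, p)_{N+1} (C^2 q^{N+1}; q, p)_k}{(C^2; q, p)_k \, \theta(C^2 q^{2k}; p)},
\end{equation*}
together with the analogous formula for the $Cq^i/x$ product. The nontrivial piece is $\prod_{i=0,\,i \neq k}^{N} \theta(q^{i-k}; p)$, which I will handle by applying the reflection rule $\theta(y^{-1}; p) = -y^{-1} \theta(y; p)$ to the $k$ factors with $i < k$, then using the standard inversion
\begin{equation*}
(q^{-N}; q, p)_k = (-1)^k q^{k(k-1)/2 - Nk} \frac{(q; q, p)_N}{(q; q, p)_{N-k}}
\end{equation*}
to eliminate $(q; q, p)_{N-k}$; the two $(-1)^k$ factors cancel and the three $q$-exponents telescope to the clean identification
\begin{equation*}
\prod_{i=0,\,i \neq k}^{N} \theta(q^{i-k}; p) = q^{-k(N+1)} \frac{(q; q, p)_k (q; q, p)_N}{(q^{-N}; q, p)_k}.
\end{equation*}

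Substituting these closed forms and multiplying through by $(C/x)^N (q, C^2 q; q, p)_N / (Cxq, Cq/x; q, p)_N$, the factor $(Cx; q, p)_{N+1} / [(Cxq; q, p)_N \theta(Cq^k x; p)]$ collapses to $(Cx; q, p)_k / (Cxq; q, p)_k$ (and similarly for $1/x$), while $(C^2 q; q, p)_N / (C^2; q, p)_{N+1} = 1/\theta(C^2; p)$ and $q^{-kN} \cdot q^{k(N+1)} = q^k$, together reproducing exactly the VWP coefficient $q^k \theta(C^2 q^{2k}; p)/\theta(C^2; p) \cdot (C^2, Cx, C/x, q^{-N}; q, p)_k / (q, Cxq, Cq/x, C^2 q^{N+1}; q, p)_k$. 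The only real obstacle is meticulous bookkeeping of the signs and $q$-powers produced by the reflection identity and the inversion of $(q^{-N}; q, p)_k$: one must verify that the two $(-1)^k$ factors annihilate each other and that the three exponents $-k(k+1)/2$, $k(k-1)/2 - Nk$, and $-kN$ combine precisely to the single $q^k$ demanded by the very-well-poised form on the right-hand side.
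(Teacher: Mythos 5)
Your proof is correct, and it takes a genuinely different and shorter route than the paper. The paper gives two proofs, both of which start from a two-parameter interpolation grid ($x_i=Aq^{-i},\ b_i=Cq^i$ applied to Theorem \ref{mainthmchenfu}, or $x_i=Bq^{i-1},\ b_i=Cq^i$ applied to Theorem \ref{mainthmyl}), exchange the order of summation, and then must evaluate the resulting inner sum as a terminating ${}_{10}V_9$ via Frenkel and Turaev's summation formula \eqref{frenkel-sum} before the auxiliary parameter ($A$ or $B$) finally cancels. You instead specialize the confluent Lagrange-type formula \eqref{13-13} of Theorem \ref{mainthmyl-latest} directly to the geometric nodes $b_k=Cq^k$, which produces the single sum immediately; all that remains is the conversion of the three node products into $q,p$-shifted factorials. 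I checked the delicate one: with $\theta(q^{-j};p)=-q^{-j}\theta(q^j;p)$ one gets $\prod_{i=0,i\neq k}^{N}\theta(q^{i-k};p)=(-1)^kq^{-k(k+1)/2}(q;q,p)_k(q;q,p)_{N-k}$, and eliminating $(q;q,p)_{N-k}$ through the stated inversion of $(q^{-N};q,p)_k$ indeed yields $q^{-k(N+1)}(q;q,p)_k(q;q,p)_N/(q^{-N};q,p)_k$, after which $q^{k(N+1)}\cdot q^{-kN}=q^k$ and the remaining ratios collapse exactly as you say (e.g.\ $(Cx;q,p)_{N+1}/[(Cxq;q,p)_N\theta(Cq^kx;p)]=(Cx;q,p)_k/(Cxq;q,p)_k$ and $(C^2q;q,p)_N/(C^2;q,p)_{N+1}=1/\theta(C^2;p)$). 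What your approach buys is economy: it needs no summation theorem at all, only the reflection rule $\theta(1/y;p)=-y^{-1}\theta(y;p)$. What the paper's approach buys is the explicit link between this interpolation formula and the ${}_{10}V_9$ machinery (and, in the second proof, a demonstration that the answer is independent of the auxiliary grid parameter). One small caveat you share with the paper: the nodes $Cq^k$ must be admissible, i.e.\ $q^{i-k}$ and $C^2q^{i+k}$ must avoid integer powers of $p$ so that no $\theta(b_ib_k,b_i/b_k;p)$ vanishes; this genericity is implicit throughout and is not a defect of your argument.
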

Once taking Proposition \ref{closed} into account,  we
can show an even more general interpolation formula.
\begin{dl}\label{generialized} Assume that $f(x)\in \mathcal{L}_{N_0}(P(x),Q(x))$. For $m$ integers $N_i\geq 0$, let $N=\sum_{i=0}^mN_i$.
Then we have the expansion
 \begin{align}&\bigg(\frac{C}{x}\bigg)^{N_0}\frac{(q,C^2q;q,p)_{N}}{(Cxq,Cq/x;q,p)_{N}}
 \bigg(\prod_{i=1}^m\frac{(A_ix,A_i/x;q,p)_{N_i}}{(A_iC,A_i/C;q,p)_{N_i}}\bigg)~f(x)\label{xe33-3-3}\\
&=\sum_{k=0}^{N}q^{k}\frac{\theta(C^2q^{2k};p)} {\theta(C^2;p)}\frac{(C^2,C/x,Cx,q^{-N};q,p)_{k}} {(q,Cxq,Cq/x,C^2q^{N+1};q,p)_{k}}f(Cq^k)\prod_{i=1}^m\frac{(A_iCq^{N_i},Cq/A_i;q,p)_k}{(Cq^{-N_i+1}/A_i,A_iC;q,p)_k}.\nonumber
\end{align}
\end{dl}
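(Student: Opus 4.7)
The plan is to deduce \eqref{xe33-3-3} from the single-variable interpolation formula \eqref{xe33-3} of Theorem \ref{mainthm} by applying that theorem to a suitably chosen elliptic Askey-Wilson polynomial built from $f(x)$ and the auxiliary factors $(A_ix,A_i/x;q,p)_{N_i}$. The key structural observation is Remark \ref{remark1.8}: each $(A_ix,A_i/x;q,p)_{N_i}$ equals $x^{-N_i}$ times an element of $\mathcal{L}_{N_i}(P(x),Q(x))$. Compensating for this missing power of $x$ and invoking Proposition \ref{closed} gives
\[
G(x):=x^{N-N_0}\,f(x)\prod_{i=1}^{m}(A_ix,A_i/x;q,p)_{N_i}\in \mathcal{L}_N(P(x),Q(x)).
\]

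Next I apply Theorem \ref{mainthm} to $G(x)$. The left-hand side carries $(C/x)^N$, and the right-hand side involves $G(Cq^k)=(Cq^k)^{N-N_0}f(Cq^k)\prod_i(A_iCq^k,A_iq^{-k}/C;q,p)_{N_i}$. Dividing both sides of the resulting identity by the overall constant $C^{N-N_0}\prod_{i=1}^{m}(A_iC,A_i/C;q,p)_{N_i}$ collapses the left-hand side to exactly the expression in \eqref{xe33-3-3}, since $x^{N-N_0}(C/x)^N=C^{N-N_0}(C/x)^{N_0}$ so that the remaining prefactor is $(C/x)^{N_0}$ and each $(A_ix,A_i/x;q,p)_{N_i}$ acquires the normalizing denominator $(A_iC,A_i/C;q,p)_{N_i}$.

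The main step is then to transform the $k$-th summand on the right into the asserted form. For each $i$, the theta-reflection $\theta(1/y;p)=-y^{-1}\theta(y;p)$ gives
\[
(A_iq^{-k}/C;q,p)_{N_i}=(-1)^{N_i}(A_i/C)^{N_i}q^{N_i(N_i-1)/2-N_ik}(Cq^{k-N_i+1}/A_i;q,p)_{N_i},
\]
and combining this with the elementary rearrangement $(a;q,p)_{k+N_i}=(a;q,p)_k(aq^k;q,p)_{N_i}=(a;q,p)_{N_i}(aq^{N_i};q,p)_k$ yields the cleaner ratio
\[
\frac{(A_iCq^k,A_iq^{-k}/C;q,p)_{N_i}}{(A_iC,A_i/C;q,p)_{N_i}}=q^{-N_ik}\,\frac{(A_iCq^{N_i},Cq/A_i;q,p)_k}{(A_iC,Cq^{-N_i+1}/A_i;q,p)_k}.
\]
Taking the product over $i=1,\ldots,m$ produces an overall factor $q^{-k(N-N_0)}$, which exactly cancels the $q^{k(N-N_0)}$ coming from $(Cq^k)^{N-N_0}$ in $G(Cq^k)$, leaving precisely the summand prescribed by \eqref{xe33-3-3}.

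I expect the main obstacle to be the sign-and-power bookkeeping in the last step, together with the realization that the compensating factor $x^{N-N_0}$ in the definition of $G$ is indispensable: without it, applying Theorem \ref{mainthm} directly to $f(x)\prod_i(A_ix,A_i/x;q,p)_{N_i}$ leaves an uncancelled $(x/(Cq^k))^{N-N_0}$ inside each summand, because both the $x$-exponent on the left and the $q$-exponent on the right would fail to match the statement of \eqref{xe33-3-3}. Once this multiplication is in place, the theta-reflection identity produces the correct cancellation in a single stroke.
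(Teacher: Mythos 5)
Your proposal is correct and follows essentially the same route as the paper: the auxiliary polynomial $G(x)=x^{N-N_0}f(x)\prod_{i=1}^m(A_ix,A_i/x;q,p)_{N_i}$ is exactly the paper's $F(x)=f(x)\prod_{i=1}^m x^{N_i}(A_ix,A_i/x;q,p)_{N_i}$, and your key ratio identity is precisely the combination of \eqref{added-1-1} and \eqref{added-2} that the paper invokes. The only cosmetic difference is that you rederive that shifted-factorial rearrangement from the theta reflection rather than citing the stated lemma.
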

Notice  that there is a subtle difference between Theorem \ref{schlosserthm} and the special case $m=1$ of Theorem \ref{generialized}. It is this difference that inspires us to explore $W_c^N$. Up to this point, we find a new  characteristic of $W_c^N$, which states that $x^Ng_N(x)\in \mathcal{L}_{N}(P(x),Q(x))$  being  an elliptic Askey-Wilson polynomial of degree $N$. For comparison purpose, we mention here that   $g_N(x)$ is referred to as $BC_1$ theta functions of degree one by Rains \cite[Definition 1]{17} and to as $D_N$ theta functions by  Rosengren and Schlosser  \cite[Definition 3.1]{20}, respectively. The reader may consult  loc.cit. for their full  exposition.

\begin{dl}\label{chara} Let $P(x)$ and $Q(x)$ be given by  \eqref{PQdef}, and $g_m(x)$ be such a function that
$$
\frac{g_m(x)}{(cx,c/x;q,p)_m}\in W_c^N~~(m\leq N).
$$
 Then,  there must exist a sequence of complex numbers $\{\lambda_k\}_{k=0}^ N$ being independent of $x$, such that
\begin{align}g_m(x)=\frac{1}{x^m}\bigg(\sum_{k=0}^N\lambda_k P(x)^kQ(x)^{N-k}\bigg)\prod_{k=0}^{N-m-1}\frac{1}{P(cq^{m+k})Q(x)-Q(cq^{m+k})P(x)}.\label{threethreethree}
\end{align}
In particular,
\begin{align}g_N(x)=\frac{1}{x^N}\sum_{k=0}^N\lambda_k P(x)^kQ(x)^{N-k}.\label{oneoneone}
\end{align}
\end{dl}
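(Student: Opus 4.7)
The plan is to handle first the extremal case $m=N$, equation \eqref{oneoneone}, and then to deduce the general statement \eqref{threethreethree} from it. For the reduction: if $g_m(x)/(cx,c/x;q,p)_m\in W_c^N$, then upon writing each spanning ratio $g_k(x)/(cx,c/x;q,p)_k$ over the common denominator $(cx,c/x;q,p)_N=(cx,c/x;q,p)_k\prod_{i=k}^{N-1}\theta(cq^ix,cq^i/x;p)$ and checking the quasi-periodicity of each $\theta$-factor, one sees that
\[
h(x):=g_m(x)\prod_{i=m}^{N-1}\theta(cq^ix,cq^i/x;p)
\]
itself satisfies \eqref{2-2-22} with index $N$. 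Once \eqref{oneoneone} is known for $h$, the factorization $\theta(ax,a/x;p)=(P(a)Q(x)-Q(a)P(x))/x$ recalled in Remark \ref{remark1.8} converts the formula for $h$ directly into \eqref{threethreethree} for $g_m$.

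For the case $m=N$, set $F(x):=x^N g_N(x)$. A short calculation from $g_N(1/x)=g_N(x)$ and $g_N(px)=g_N(x)/(p^Nx^{2N})$ gives
\[
F(1/x)=F(x)/x^{2N},\qquad F(px)=F(x)/x^{2N}.
\]
Standard $\theta$-inversions yield $P(1/x)=P(px)=P(x)/x^2$ and $Q(1/x)=Q(px)=Q(x)/x^2$, so each generator $P(x)^jQ(x)^{N-j}$ of $\mathcal{L}_N(P,Q)$ satisfies exactly the same two relations; hence the task reduces to showing $F\in\mathcal{L}_N(P,Q)$.

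I would pick $N+1$ points $b_0,\ldots,b_N$ in sufficiently general position (pairwise distinct modulo $\langle p\rangle$, with each $b_j$ distinct from every $p^\ell/b_k$ modulo $\langle p\rangle$) and form the Lagrange-type candidate
\[
\widetilde F(x):=x^N\sum_{k=0}^{N}\frac{F(b_k)}{b_k^N}\prod_{\substack{i=0\\ i\neq k}}^{N}\frac{\theta(b_ix,b_i/x;p)}{\theta(b_ib_k,b_i/b_k;p)},
\]
patterned on Theorem \ref{mainthmyl-latest}. By the $\theta$-to-$(P,Q)$ factorization this lies in $\mathcal{L}_N(P,Q)$, and direct evaluation gives $\widetilde F(b_k)=F(b_k)$ for each $k$.

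The main obstacle is the uniqueness step: showing $G(x):=F(x)-\widetilde F(x)\equiv 0$. Since $G$ is holomorphic on $\mathbb{C}^*$ and inherits both quasi-periodicity relations, the vanishing $G(b_k)=0$ propagates via the functional equations to $G(1/b_k)=0$ and $G(p/b_k)=0$ for every $k$. I will close by a Jacobi-type zero count on the elliptic curve $\mathbb{C}^*/\langle p\rangle$: a nonzero holomorphic function on $\mathbb{C}^*$ with $G(px)=G(x)/x^{2N}$ has exactly $2N$ zeros (counted with multiplicity) in a fundamental annulus $\{|p|\le|x|<1\}$, whereas the $2(N+1)$ points $\{b_k,\,p/b_k\}_{k=0}^{N}$ are distinct zeros of $G$ lying in such an annulus for generic $b_k$. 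Since $2(N+1)>2N$, this forces $G\equiv 0$, so $F\in\mathcal{L}_N(P,Q)$; this establishes \eqref{oneoneone}, and the reduction of the first paragraph then delivers \eqref{threethreethree}.
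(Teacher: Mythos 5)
Your proposal is correct, but it reaches \eqref{oneoneone} by a genuinely different route than the paper. The paper's proof is purely algebraic and rests on an external input: it invokes Schlosser's basis lemma \cite[Lemma 4.1]{schlosseradd} to expand $g_N(x)/(cx,c/x;q,p)_N$ in the basis $\{(ax,a/x;q,p)_k/(cx,c/x;q,p)_k\}_{k=0}^N$, clears denominators, and then applies the factorization $y\,\theta(xy,x/y;p)=P(x)Q(y)-P(y)Q(x)$ of Lemma \ref{lemma2} to each product of $N$ theta pairs, so that every basis element times $x^N$ visibly lands in $\mathcal{L}_N(P(x),Q(x))$. You instead bypass Schlosser's lemma entirely: you extract from the definition of $W_c^N$ only the quasi-periodicity \eqref{2-2-22} of $F(x)=x^Ng_N(x)$, build a competitor inside $\mathcal{L}_N(P(x),Q(x))$ by Lagrange interpolation at $N+1$ generic nodes (the kernel of Theorem \ref{mainthmyl-latest}), and force the difference to vanish by counting zeros of a section of degree $2N$ on $\mathbb{C}^*/\langle p\rangle$ against the $2(N+1)$ zeros $\{b_k,\,p/b_k\}$. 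The zero count is right ($G(px)=x^{-2N}G(x)$ gives exactly $2N$ zeros per fundamental annulus by the argument principle), and your reduction of general $m$ to $m=N$ by multiplying through by $\prod_{i=m}^{N-1}\theta(cq^ix,cq^i/x;p)$ coincides with the paper's. What your route buys is a stronger, self-contained statement --- the identification of the full $(N+1)$-dimensional space of degree-$N$ theta functions satisfying \eqref{2-2-22} with $x^{-N}\mathcal{L}_N(P(x),Q(x))$, which in effect re-proves the relevant half of Schlosser's lemma --- at the cost of two inputs the paper does not need: the holomorphy of the $g_k$ on $\mathbb{C}^*$ (implicit in the Rains/Rosengren--Schlosser definitions the paper cites, but not written into \eqref{2-2-22}) and the standard elliptic zero-counting fact. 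You should state the holomorphy assumption explicitly when writing this up.
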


The rest of our paper is organized as follows. In succeeding section,  some preliminary results about the $(f,g)$-inversion formula and polynomial expansions  are given in details. They are keys to Theorems \ref{mainthmchenfu} and \ref{mainthmyl-latest}. The full proofs of the main theorems are given in Section
 3. Some elliptic function identities
 including an extension of Weierstrass' theta identity and an elliptic analogue  of Gasper's summation formula for VWP ${}_{6+2m}\phi_{5+2m}$ series, as well as  a generalized elliptic Karlsson-Minton type identity are  presented in Section 4.

\section{Preliminaries}
One of our main ingredients for Theorems \ref{mainthmchenfu} and \ref{mainthmyl-latest},  instead of the Askey-Wilson operator $\mathcal{D}_q$ and the divided difference operator $\delta_{n(\bullet)}$, is the technique of matrix inversions (in the sense of \eqref{inversedef}). Among matrix inversions, a typical result is the following
\begin{yl}[The $(f,g)$-inversion formula. {\rm Cf. \cite[Theorem 1.3]{0020}}]\label{fglm}
 Let $A=(A_{n,k})_{n\geq k\geq 0}$ and $B=(B_{n,k})_{n\geq k\geq 0}$ be a pair of
{\sl infinite-dimensional
    lower-triangular}  matrices
with entries given by
\begin{align}
A_{n,k}&=\frac{\prod_{i=k}^{n-1}f(x_i,b_k)}
{\prod_{i=k+1}^{n}g(b_i,b_k)}\label{news1115550}\qquad\mbox{and}\\
B_{n,k}&=
\frac{f(x_k,b_k)}{f(x_n,b_n)}\frac{\prod_{i=k+1}^{n}f(x_i,b_n)}
{\prod_{i=k}^{n-1}g(b_i,b_n)},\label{news1115551}\quad\mbox{respectively},
\end{align}
where $\{x_n\}_{n\geq 0}$ and
$\{b_n\}_{n\geq 0}$ are two arbitrary sequences such that none of the denominators
in the right-hand sides of (\ref{news1115550}) and
(\ref{news1115551}) vanish. Then $A=(A_{n,k})_{n\geq k\geq 0}$ and
$B=(B_{n,k})_{n\geq k\geq 0}$ is a matrix inversion, namely,
  \begin{align}
 \sum_{n\geq i\geq k}A_{n,i}B_{i,k}=\sum_{n\geq i\geq k}B_{n,i}A_{i,k}=\delta_{n,k},\label{inversedef}
\end{align}
where $\delta_{n,k}$ denotes the usual Kronecker delta,
if and
only if for all complex numbers $a,b,c,x$,
 \begin{eqnarray}
g(a,b)f(x,c)+g(b,c)f(x,a)+g(c,a)f(x,b)=0\label{triid}
\end{eqnarray}
with a prior requirement  that  $g(x,y)=-g(y,x).$
\end{yl}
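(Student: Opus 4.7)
My plan is to establish the two implications separately, reducing the matrix product $AB$ to a combinatorial identity in $f$ and $g$ via the explicit forms of the entries.

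For the direction ``inversion $\Rightarrow$ three-term identity'', I would specialize to the smallest nontrivial case $n=k+2$. Substituting \eqref{news1115550} and \eqref{news1115551}, the relation $(AB)_{k+2,k}=0$ reduces, after dividing through by the common factor $f(x_k,b_k)$, to
\[
\frac{f(x_{k+1},b_{k+2})}{g(b_k,b_{k+2})g(b_{k+1},b_{k+2})}
+\frac{f(x_{k+1},b_{k+1})}{g(b_{k+2},b_{k+1})g(b_k,b_{k+1})}
+\frac{f(x_{k+1},b_k)}{g(b_{k+1},b_k)g(b_{k+2},b_k)}=0.
\]
Clearing denominators and applying $g(y,x)=-g(x,y)$ recasts this as \eqref{triid} with $(a,b,c,x)=(b_k,b_{k+1},b_{k+2},x_{k+1})$. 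Since the sequences $\{x_n\},\{b_n\}$ are otherwise free, \eqref{triid} follows for all complex $a,b,c,x$.

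For the converse, I would first collapse the telescoping products in $A_{n,j}B_{j,k}$ to obtain the cleaner form
\[
A_{n,j}B_{j,k} = f(x_k,b_k)\,\frac{\prod_{i=k+1}^{n-1} f(x_i,b_j)}{\prod_{i=k,\,i\neq j}^{n} g(b_i,b_j)}\qquad (n>k),
\]
which, together with the trivial diagonal case $A_{kk}B_{kk}=1$, reduces $(AB)_{n,k}=\delta_{n,k}$ to proving
\[
S(n,k):=\sum_{j=k}^{n} \frac{\prod_{i=k+1}^{n-1} f(x_i,b_j)}{\prod_{i=k,\,i\neq j}^{n} g(b_i,b_j)}=0 \qquad (n>k).
\]
I would prove $S(n,k)=0$ by induction on $n-k$. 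The base case $n-k=1$ is the antisymmetry of $g$. For the inductive step, \eqref{triid} with $(a,b,c)=(b_j,b_{n-1},b_n)$ expresses $f(x_{n-1},b_j)$ as a $g$-linear combination of $f(x_{n-1},b_{n-1})$ and $f(x_{n-1},b_n)$ for each interior index $j\in\{k,\dots,n-2\}$, splitting the corresponding summand of $S(n,k)$ into two pieces. The pulled $g(b_j,b_{n-1})$ or $g(b_n,b_j)$ cancels (up to a sign from antisymmetry) the corresponding factor in the denominator, and each resulting partial sum --- after relabelling one entry of the $b$-sequence --- coincides with $S(n-1,k)$ for the modified sequence minus a single boundary term. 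The $S(n-1,k)$ parts vanish by the inductive hypothesis, and the leftover boundary contributions are precisely what is needed to cancel the untouched summands of $S(n,k)$ at $j=n-1$ and $j=n$, closing the induction.

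The companion identity $(BA)_{n,k}=\delta_{n,k}$ follows automatically, since a lower-triangular infinite matrix with nonvanishing diagonal has a unique two-sided inverse as soon as a one-sided inverse exists. The principal obstacle will be the index bookkeeping in the inductive step --- verifying that after the three-term substitution the two truncated partial sums really do embed into shorter instances of $S$ under the relabelled $b$-sequence, and that every sign produced by the antisymmetry of $g$ combines so that the boundary remainders exactly cancel the two untouched summands; everything else is a routine collapse of telescoping products.
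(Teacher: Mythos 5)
Your proposal is correct, but there is nothing in the paper to compare it against: the authors do not prove Lemma~\ref{fglm} at all --- it is imported verbatim from Ma's earlier work (the reference \cite[Theorem 1.3]{0020}) and used as a black box. Judged on its own terms, your argument is a sound direct verification. I checked the two places where sketches of this kind usually break. First, the collapsed form $A_{n,j}B_{j,k}=f(x_k,b_k)\prod_{i=k+1}^{n-1}f(x_i,b_j)\big/\prod_{i=k,\,i\neq j}^{n}g(b_i,b_j)$ is right for all $j$, including the boundary cases $j=k$ and $j=n$, so $(AB)_{n,k}=0$ for $n>k$ is indeed equivalent to $S(n,k)=0$ (using that $f(x_k,b_k)\neq 0$, which is guaranteed by the nonvanishing hypothesis on the denominators of $B$). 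Second, the inductive step does close: applying \eqref{triid} at $(b_j,b_{n-1},b_n;x_{n-1})$ for $j\in\{k,\dots,n-2\}$ splits $T_j$ into $\frac{f(x_{n-1},b_n)}{g(b_{n-1},b_n)}U_j-\frac{f(x_{n-1},b_{n-1})}{g(b_{n-1},b_n)}V_j$, where the $V_j$ are summands of $S(n-1,k)$ for the original sequence and the $U_j$ are summands of $S(n-1,k)$ for the sequence with $b_{n-1}$ replaced by $b_n$; the induction hypothesis (which must be, and is, stated for arbitrary sequences) gives $\sum_{j\le n-2}V_j=-V_{n-1}$ and $\sum_{j\le n-2}U_j=-U_n$, and one verifies $T_n=\frac{f(x_{n-1},b_n)}{g(b_{n-1},b_n)}U_n$ and $T_{n-1}=-\frac{f(x_{n-1},b_{n-1})}{g(b_{n-1},b_n)}V_{n-1}$, so the boundary remainders cancel $T_{n-1}+T_n$ exactly. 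The passage from $AB=I$ to $BA=I$ for unitriangular infinite matrices is standard. The only caveat worth recording is in the ``only if'' direction: the $n=k+2$ specialization yields \eqref{triid} only for quadruples $(a,b,c,x)$ at which the relevant denominators are nonzero, so extending to \emph{all} complex $a,b,c,x$ tacitly requires $f$ and $g$ to be continuous (or analytic) so that the degenerate cases follow by a limiting argument --- the same implicit assumption already present in the cited source.
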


As the earlier work of \cite{0020} displays, the $(f,g)$-inversion formula contains many known inverse relations   useful to the study of $q$-series as special cases.   The reader is referred to \cite{egobook,henrichi} for further details on inverse relations and the classical lagrange inversion formula, and to \cite{0020,0021} for applications of the $(f,g)$-inversion formulas.

For completeness,  we recall here  two  variants of  the $(f,g)$-inversion  formula.

\begin{yl}[{\rm Cf. \cite[Lemma 2.2]{wangjinpaper}}]\label{dl2}
 Let $\{x_n\}_{n\geq 0}$ and $\{b_n\}_{n\geq 0}$
 be arbitrary complex sequences such that $b_n$'s are pairwise
 distinct, $g(x,y)=-g(y,x),f(x,y)$ is subject to $(\ref{triid})$.
  Then the linear system with respect to two sequences $\{F_n\}_{n\geq 0}$ and $\{G_n\}_{n\geq 0}$ \begin{eqnarray}
F_n=\sum_{k=0}^{n}G_k f(x_k,b_k)
\frac{\prod_{i=0}^{k-1}g(b_i,b_n)}{\prod_{i=1}^{k}f(x_i,b_n)}\label{27}
\end{eqnarray}
is equivalent to \begin{eqnarray}
G_n=\sum_{k=0}^{n}F_k\frac{\prod_{i=1}^{n-1} f(x_i,b_k)}
 {\prod_{i=0,i\neq k}^{n}g(b_i,b_k)}.\label{28}
\end{eqnarray}
\end{yl}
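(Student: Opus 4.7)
The plan is to reduce Lemma~\ref{dl2} directly to the $(f,g)$-inversion formula of Lemma~\ref{fglm} through a single \emph{diagonal rescaling}, thereby avoiding a from-scratch verification of the underlying orthogonality via the triangle identity~\eqref{triid}. Denote by $M_{n,k}$ the coefficient of $G_k$ in~\eqref{27} and by $N_{n,k}$ the coefficient of $F_k$ in~\eqref{28}. My goal is to exhibit a sequence $\{\alpha_n\}_{n\geq 0}$, depending on $n$ alone, such that
\[
M_{n,k}=\alpha_n\, B_{n,k}\qquad\text{and}\qquad N_{n,k}=\frac{A_{n,k}}{\alpha_k}
\]
hold simultaneously for every $n\geq k\geq 0$, with $A_{n,k}$ and $B_{n,k}$ as in~\eqref{news1115550}--\eqref{news1115551}.

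To locate $\alpha_n$, I would form the ratio $M_{n,k}/B_{n,k}$ and invoke the product-splittings
\[
\prod_{i=0}^{k-1} g(b_i,b_n)\cdot\prod_{i=k}^{n-1} g(b_i,b_n)=\prod_{i=0}^{n-1} g(b_i,b_n),\qquad \prod_{i=1}^{k} f(x_i,b_n)\cdot\prod_{i=k+1}^{n} f(x_i,b_n)=\prod_{i=1}^{n} f(x_i,b_n),
\]
which force the $k$-dependence to cancel and deliver the clean formula
\[
\alpha_n=\frac{f(x_n,b_n)\prod_{i=0}^{n-1} g(b_i,b_n)}{\prod_{i=1}^{n} f(x_i,b_n)}.
\]
The companion identity $N_{n,k}=A_{n,k}/\alpha_k$ then follows from the analogous splittings
$\prod_{i=k}^{n-1}f(x_i,b_k)\cdot\prod_{i=1}^{k}f(x_i,b_k)=f(x_k,b_k)\prod_{i=1}^{n-1}f(x_i,b_k)$, in which the index $i=k$ is duplicated, together with
$\prod_{i=0}^{k-1}g(b_i,b_k)\cdot\prod_{i=k+1}^{n}g(b_i,b_k)=\prod_{i=0,i\neq k}^{n}g(b_i,b_k)$, after which the stray factor $f(x_k,b_k)$ cancels between numerator and denominator.

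Once both factorizations are in place, dividing~\eqref{27} through by $\alpha_n$ recasts the forward system as $F_n/\alpha_n=\sum_{k=0}^{n}B_{n,k}\,G_k$, to which Lemma~\ref{fglm} applies verbatim; its inverse statement then reads $G_n=\sum_{k=0}^{n}A_{n,k}\,(F_k/\alpha_k)=\sum_{k=0}^{n}N_{n,k}F_k$, which is precisely~\eqref{28}. The reverse implication is obtained by reading the same rescaling backwards, and the hypothesis that the $b_n$'s are pairwise distinct (together with the implicit nonvanishing of the displayed denominators) guarantees $\alpha_n\neq 0,\infty$, so that the rescaling is legitimate throughout. The only genuine obstacle is the bookkeeping required to confirm that the prescribed $\alpha_n$ is genuinely $k$-free; all of the substantive analytic content, namely the triangle condition~\eqref{triid}, has already been absorbed in Lemma~\ref{fglm} and need not be reinvoked.
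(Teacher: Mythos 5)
Your proposal is correct and is essentially the intended argument: the paper only recalls Lemma \ref{dl2} as a variant of the $(f,g)$-inversion formula of Lemma \ref{fglm} (citing \cite{wangjinpaper} without reproving it), and your diagonal rescaling with $\alpha_n=f(x_n,b_n)\prod_{i=0}^{n-1}g(b_i,b_n)\big/\prod_{i=1}^{n}f(x_i,b_n)$ does verify that the coefficient of $G_k$ in \eqref{27} equals $\alpha_n B_{n,k}$ while the coefficient of $F_k$ in \eqref{28} equals $A_{n,k}/\alpha_k$. The triangular orthogonality \eqref{inversedef} then transfers verbatim, so the equivalence of \eqref{27} and \eqref{28} follows exactly as you describe.
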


As demonstrated clearly in  \cite{wangjinpaper},   the above $(f,g)$-inversion formula  can be reformulated as follows.
\begin{yl}[The $(f,g)$-expansion formula. {\rm Cf. \cite[Lemma 2.3]{wangjinpaper}}] \label{fglemma}  With all conditions as in Lemma \ref{dl2}. If there exists an expansion of the form
\begin{eqnarray}
F(x)=\sum_{k=0}^N G_k f(x_k,b_k)\frac{\prod_{i=0}^{k-1}g(b_i,x)}{\prod_{i=1}^{k}f(x_i,x)},
\label{equfun0}
\end{eqnarray}
where $N$ is either finite or infinite integer,
then for all $n\leq N$, the coefficients
\begin{eqnarray}
G_n=\sum_{k=0}^{n}F(b_k)\frac{\prod_{i=1}^{n-1} f(x_i,b_k)}
 {\prod_{i=0,i\neq k}^{n}g(b_i,b_k)}.\label{280}
\end{eqnarray}
\end{yl}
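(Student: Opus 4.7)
The strategy is to reduce Lemma \ref{fglemma} directly to Lemma \ref{dl2} by a single specialization of the free variable $x$. Concretely, I would substitute $x = b_n$ into the hypothesized expansion \eqref{equfun0} for each fixed $n \leq N$. The decisive observation is that the standing antisymmetry $g(x,y) = -g(y,x)$ forces $g(b_n,b_n) = 0$, so that for every summation index $k \geq n+1$ the numerator $\prod_{i=0}^{k-1} g(b_i, b_n)$ contains the zero factor $g(b_n,b_n)$. Every such term therefore vanishes, truncating the (possibly infinite) series down to the finite range $0 \leq k \leq n$ and yielding
\begin{equation*}
F(b_n) = \sum_{k=0}^{n} G_k\, f(x_k,b_k)\, \frac{\prod_{i=0}^{k-1} g(b_i, b_n)}{\prod_{i=1}^{k} f(x_i, b_n)}.
\end{equation*}

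This is exactly the linear system \eqref{27} of Lemma \ref{dl2} with $F_n$ identified with $F(b_n)$. Since Lemma \ref{dl2} asserts that \eqref{27} and \eqref{28} are equivalent under the prevailing hypotheses --- pairwise distinct $\{b_n\}_{n\geq 0}$, antisymmetry of $g$, and the triangle relation \eqref{triid} --- the coefficients $G_n$ are uniquely recovered by \eqref{28}, and reading off the latter with $F_k \mapsto F(b_k)$ delivers the claimed formula \eqref{280} verbatim. Put differently, Lemma \ref{fglemma} is simply the interpolation reading of the $(f,g)$-inversion: one direction of the matrix inverse tells us precisely how to recover the expansion coefficients from the function values at the nodes $\{b_k\}$.

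The only step requiring attention is the well-definedness of the substitution $x = b_n$, namely that every denominator $\prod_{i=1}^{k} f(x_i, b_n)$ appearing in \eqref{equfun0} for $1 \leq k \leq n$ is nonzero. But this is already built into the non-vanishing hypothesis inherited from Lemma \ref{fglm}, where $\{x_n\}_{n\geq 0}$ and $\{b_n\}_{n\geq 0}$ are assumed so that no denominator in the inverse pair $(A_{n,k}), (B_{n,k})$ vanishes. With that caveat handled, the argument is purely formal and requires no theta-function or elliptic input whatsoever; the main (and only) obstacle is recognizing that the antisymmetry of $g$ alone is what collapses the sum and aligns the statement with the already-available matrix inversion.
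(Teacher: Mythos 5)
Your argument is correct and is exactly the intended derivation: the paper records this lemma as a reformulation of Lemma \ref{dl2} (citing the earlier work rather than reproving it), and that reformulation proceeds precisely by setting $x=b_n$ in \eqref{equfun0}, using $g(b_n,b_n)=0$ from antisymmetry to truncate the sum to $k\leq n$, and then reading off $G_n$ from the inverse relation \eqref{28}. Your remarks on the infinite-$N$ case and on the non-vanishing of the denominators at $x=b_n$ cover the only points needing care, so nothing is missing.
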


To proceed further, we need  two expansion formulas of polynomials. The first one is Lemma 3.2 of \cite{wangjinpaper}. We record its proof below for completeness.
\begin{yl}\label{polybasis} Let $\{x_n\}_{n\geq 0}$ and $\{b_n\}_{n\geq 0}$ be two sequences such that $x_k\neq b_i, b_i\neq b_j, 0\leq i\neq j, k\leq N$. Then for any polynomial $f(x)$ of degree at most $N$, there holds
\begin{align}
f(x)=\sum_{k=0}^{N}\lambda_k\prod_{i=0}^{k-1}(b_i-x)\prod_{i=k+1}^{N}(x_i-x),
\label{assume-212}
\end{align}
where, for $n\leq N$,
\begin{align}
\lambda_n=(x_n-b_n)\sum_{k=0}^{n}
f(b_k)\prod_{i=n}^{N}\frac{1} {x_i-b_k}\prod_{i=0,i\neq k}^{n}\frac{1}{b_i-b_k}.\label{wangcoeff}
\end{align}
\end{yl}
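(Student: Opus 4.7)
The plan is to reduce Lemma \ref{polybasis} to the $(f,g)$-inversion/expansion machinery already set up in the excerpt, specialized to the simplest possible pair of kernels. Taking
\[ f(u,v):=v-u,\qquad g(u,v):=v-u, \]
the function $g$ is antisymmetric and the triple identity \eqref{triid} is exactly the elementary identity \eqref{tripleknown} highlighted in the paper, so Lemmas \ref{dl2} and \ref{fglemma} both apply verbatim to this choice.

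First I would evaluate the claimed expansion \eqref{assume-212} at $x=b_n$. Since $b_i\neq b_j$ for $i\neq j$, the factor $\prod_{i=0}^{k-1}(b_i-b_n)$ vanishes for every $k>n$, so the expansion collapses to the lower-triangular system
\[ f(b_n)=\sum_{k=0}^{n}\lambda_k\prod_{i=0}^{k-1}(b_i-b_n)\prod_{i=k+1}^{N}(x_i-b_n),\qquad 0\leq n\leq N. \]
The hypothesis $x_k\neq b_i$ guarantees that the diagonal entries $\prod_{i=0}^{n-1}(b_i-b_n)\prod_{i=n+1}^{N}(x_i-b_n)$ are nonzero, so this triangular system determines the $\lambda_k$ uniquely in terms of the values $f(b_0),\ldots,f(b_N)$.

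Next I would normalize: divide both sides by $\prod_{i=1}^{N}(x_i-b_n)$ and set $F_n:=f(b_n)/\prod_{i=1}^{N}(x_i-b_n)$ and $G_k:=\lambda_k/(b_k-x_k)$. The identities $(b_i-b_n)=-g(b_i,b_n)$ and $(x_i-b_n)=-f(x_i,b_n)$ pair off identical factors $(-1)^k$ in numerator and denominator, so the normalized system is precisely \eqref{27} in Lemma \ref{dl2} for the chosen kernel pair. Invoking the equivalent formulation \eqref{28} expresses $G_n$ explicitly, and substituting back $\lambda_n=(b_n-x_n)G_n$ together with a short sign bookkeeping --- the factor $(-1)^{n-1}$ from $\prod_{i=1}^{n-1}f(x_i,b_k)$, the factor $(-1)^{n}$ from the $n$-term product $\prod_{i=0,\,i\neq k}^{n}g(b_i,b_k)$, and an overall flip in $b_n-x_n=-(x_n-b_n)$ --- should reproduce \eqref{wangcoeff} exactly.

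To upgrade the pointwise equality at $b_0,\ldots,b_N$ to an identity of polynomials, I would note that each summand on the right-hand side of \eqref{assume-212} is a product of $k+(N-k)=N$ linear factors in $x$, so both sides are polynomials of degree at most $N$, and agreement at the $N+1$ distinct points $b_0,\ldots,b_N$ forces them to coincide identically. The main (and really only) obstacle in this outline is the sign bookkeeping when matching the normalized system to \eqref{27}: one must carefully track which way each difference has been written. Conceptually, the lemma is nothing more than the $(f,g)$-expansion formula of Lemma \ref{fglemma} applied to the trivial kernels $f(u,v)=g(u,v)=v-u$, after clearing the denominator $\prod_{i=1}^{N}(x_i-x)$.
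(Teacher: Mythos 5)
Your proposal is correct and follows essentially the same route as the paper: the paper's proof also applies the $(f,g)$-expansion formula (Lemma \ref{fglemma}) with the linear kernels $f(x,y)=g(x,y)=x-y$ to the normalized function $F(x)=f(x)/\prod_{i=1}^{N}(x_i-x)$, and your sign bookkeeping with the opposite convention $v-u$ does work out to \eqref{wangcoeff}. The only difference is cosmetic --- you make explicit the triangular-system and degree-counting argument that guarantees existence and uniqueness of the $\lambda_k$, which the paper leaves implicit.
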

\pf It suffices to show that any polynomial $f(x)$ in $x$ of degree at most $N$ can be expressed as a linear combination of polynomials
$$
\left\{\prod_{i=0}^{k-1}(b_i-x)\prod_{i=k+1}^{N}(x_i-x)\right\}_{k=0}^N.
$$
To prove this, let us assume that
\begin{align}
f(x)=\sum_{k=0}^{N}\lambda_k\prod_{i=0}^{k-1}(b_i-x)\prod_{i=k+1}^{N}(x_i-x).\label{assume}
\end{align}
It remains to determine the coefficients $\lambda_k$. For this, we observe that \eqref{assume} corresponds to the special case of  Lemma \ref{fglemma}, in which $f(x,y)=g(x,y)=x-y$  and
\begin{align*}
F(x)=\frac{f(x)}{\prod_{i=1}^{N}(x_i-x)}.
\end{align*}
Solving  \eqref{assume} for $\lambda_k$ by Lemma \ref{fglemma}, we obtain
\begin{align*}
\lambda_n&=(x_n-b_n)\sum_{k=0}^{n}F(b_k)\frac{\prod_{i=1}^{n-1} (x_i-b_k)}
 {\prod_{i=0,i\neq k}^{n}(b_i-b_k)}\\
 &=(x_n-b_n)\sum_{k=0}^{n}\frac{f(b_k)}
 {\prod_{i=n}^{N} (x_i-b_k)\prod_{i=0,i\neq k}^{n}(b_i-b_k)}.
\end{align*}
The lemma is confirmed.\qed

The above conclusion suggests that the set
$$
\bigg\{\prod_{i=0}^{k-1}(b_i-x)\prod_{i=k+1}^{N}(x_i-x)\bigg|x_i\neq b_j,i\neq j, 0\leq k\leq N\bigg\}
$$
is  a basis of vector space of polynomials of degree at most $N$.
From this point forward, we proceed to find another  basis for this vector space.

\begin{yl}\label{polybasistwo} Let $N\geq 0$ be   integer and $\{x_n\}_{n\geq 0}$ and $\{b_n\}_{n\geq 0}$ be two sequences such that $x_k\neq b_i, b_i\neq b_j, 0\leq i\neq j, k\leq N$. Then for any polynomial $f(x)$ of degree at most $N$, there holds
\begin{align}
f(x)=\sum_{k=0}^{N}\lambda_k\prod_{i=0}^{k-1}(b_i-x)\prod_{i=1}^{N-k}(x_i-x),\label{assume-214}
\end{align}
where, for $n\leq N$,
\begin{align}
\lambda_n=\sum_{k=0}^{n}f(b_k)
\frac{x_{N-n+1}-b_n} {x_{N-n+1}-b_k}\prod_{i=1}^{N-n}\frac{1} {x_i-b_k}\prod_{i=0,i\neq k}^{n}\frac{1}{b_i-b_k}.\label{secondcoefficient}
\end{align}
\end{yl}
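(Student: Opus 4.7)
The plan is to deduce Lemma \ref{polybasistwo} directly from the already-proved Lemma \ref{polybasis} by a simple relabeling of the interpolation nodes $\{x_i\}$. The key observation is that the two lemmas differ only in which nodes are kept in the second product as $k$ varies: Lemma \ref{polybasis} retains $x_{k+1},\ldots,x_N$, whereas Lemma \ref{polybasistwo} retains $x_1,\ldots,x_{N-k}$. These two collections are related by the index reflection $i\mapsto N-i+1$.

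Concretely, I would introduce the auxiliary sequence $y_i := x_{N-i+1}$ for $1 \leq i \leq N$, and apply Lemma \ref{polybasis} with $\{x_i\}$ replaced by $\{y_i\}$. Since the hypothesis requires only that the nodes are pairwise distinct from the $\{b_j\}$, and this is preserved under reflection, Lemma \ref{polybasis} applies. Under this relabeling,
\[
\prod_{i=k+1}^{N}(y_i-x) \;=\; \prod_{i=k+1}^{N}(x_{N-i+1}-x) \;=\; \prod_{j=1}^{N-k}(x_j-x),
\]
so the resulting expansion is exactly \eqref{assume-214}.

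It then remains to verify that the coefficient \eqref{wangcoeff} transports to \eqref{secondcoefficient}. Substituting $y_i = x_{N-i+1}$ into \eqref{wangcoeff} yields
\[
\lambda_n \;=\; (x_{N-n+1}-b_n)\sum_{k=0}^{n} f(b_k)\prod_{i=n}^{N}\frac{1}{x_{N-i+1}-b_k}\prod_{i=0,i\neq k}^{n}\frac{1}{b_i-b_k},
\]
and reindexing the first product via $j=N-i+1$ converts $\prod_{i=n}^{N}(x_{N-i+1}-b_k)^{-1}$ into $\prod_{j=1}^{N-n+1}(x_j-b_k)^{-1}$. Factoring out the single term $j=N-n+1$ from this product gives the expression \eqref{secondcoefficient} claimed in the lemma.

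The only potentially tricky point is bookkeeping with the two index ranges (one involving $i=n,\ldots,N$, the other $j=1,\ldots,N-n+1$) and correctly identifying the factor $(x_{N-n+1}-b_k)$ that must be separated to produce the fraction $(x_{N-n+1}-b_n)/(x_{N-n+1}-b_k)$ in \eqref{secondcoefficient}; but this is purely routine. Alternatively, one could bypass Lemma \ref{polybasis} and reprove the statement from scratch by applying the $(f,g)$-expansion formula (Lemma \ref{fglemma}) with $f(x,y)=g(x,y)=x-y$ and the auxiliary function $F(x)=f(x)/\prod_{i=1}^{N-k}(x_i-x)$ after suitable rewriting, but the reflection argument is strictly shorter and reuses work already done.
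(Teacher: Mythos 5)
Your proposal is correct, and it is a genuinely different (and shorter) route than the one taken in the paper. The paper proves Lemma \ref{polybasistwo} from scratch: it substitutes $x=b_n$ into \eqref{assume-214} to obtain a lower-triangular linear system $AX=\beta$, derives the recurrence $B_{n,k}=-\sum_{i=k}^{n-1}B_{i,k}\prod_{j=i}^{n-1}\frac{x_{N-j}-b_n}{b_j-b_n}$ for the entries of $A^{-1}$, and then solves that recurrence by induction on $n-k$ to reach the closed form \eqref{280-new}. Your reflection $y_i:=x_{N-i+1}$ bypasses all of this by reducing the statement to the already-proved Lemma \ref{polybasis}: the product $\prod_{i=k+1}^{N}(y_i-x)$ becomes $\prod_{j=1}^{N-k}(x_j-x)$, the coefficient \eqref{wangcoeff} becomes $(x_{N-n+1}-b_n)\sum_k f(b_k)\prod_{j=1}^{N-n+1}(x_j-b_k)^{-1}\prod_{i\neq k}(b_i-b_k)^{-1}$, and peeling off the $j=N-n+1$ factor gives \eqref{secondcoefficient} exactly; since the products on the right of \eqref{assume-214} are $N+1$ linearly independent polynomials of degree $N$, the coefficients are in any case unique, so the identification is forced. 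Your argument both reuses prior work and explains structurally why the two coefficient formulas are so similar, which the paper's computation does not. The only bookkeeping point worth making explicit is the case $n=0$: there the reflected formula calls on $y_0=x_{N+1}$, which lies outside the index range covered by the hypothesis $x_k\neq b_i$, $k\leq N$; this is harmless because the offending factor appears as the ratio $(x_{N+1}-b_0)/(x_{N+1}-b_0)=1$ and cancels (one may equally well assign $y_0$ any generic value), but you should say a word about it rather than leave it implicit.
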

\pf
 As previously, it suffices to show that any polynomial $f(x)$ in $x$ of degree at most $N$ can be expressed as a linear combination of polynomials
$$
\left\{\prod_{i=0}^{k-1}(b_i-x)\prod_{i=1}^{N-k}(x_i-x)\right\}_{k=0}^N.
$$
In other words, assume that
\begin{align}
f(x)=\sum_{k=0}^{N}\lambda_k\prod_{i=0}^{k-1}(b_i-x)\prod_{i=1}^{N-k}(x_i-x).\label{assume-new}
\end{align}
All we need to do is to find the coefficients $\lambda_k$. Unlike the proof for Lemma \ref{polybasis},  we proceed to find $\lambda_k$  via the use of recurrence relations. To that end, we take $x=b_n$ in  \eqref{assume-new} for $n=0,1,2,\ldots,N$ in succession. Then we have a system of linear equations in $N+1$ unknown $\lambda_k$'s as below:
\begin{align}
f(b_n)=\sum_{k=0}^{n}\lambda_k\prod_{i=0}^{k-1}(b_i-b_n)\prod_{i=1}^{N-k}(x_i-b_n).\label{assume-new-1}
\end{align}
Or equivalently, in terms of linear algebras,
\begin{align}
A X=\beta,\label{assume-new-2}
\end{align}
where the $(n,k)$-entry of the coefficient matrix $A$
$$
A_{n,k}=\prod_{i=0}^{k-1}(b_i-b_n)\prod_{i=1}^{N-k}(x_i-b_n),
$$
$X$ and $\beta$ stand, respectively, for the $(N+1)$-dimensional column vectors such that
\begin{align*}
X^T:=(\lambda_0,\lambda_1,\ldots,\lambda_N),\quad \beta^T:=(f(b_0),f(b_1),\ldots,f(b_N)),
\end{align*}
where the superscript $T$ denotes the transpose of vectors.
As it is easily seen, the coefficient matrix $A$ is lower-triangular with the $(n,n)$-entry
$$
A_{n,n}=\prod_{i=0}^{n-1}(b_i-b_n)\prod_{i=1}^{N-n}(x_i-b_n)\neq 0.
$$
Hence the linear equations \eqref{assume-new-2} has the unique solution. By solving  \eqref{assume-new-2} for $\lambda_k$, we obtain
\begin{align*}
X&=A^{-1}\beta,
\end{align*}
viz., for $A^{-1}=(B_{n,k})$,
\begin{align}
\lambda_n=\sum_{k=0}^n B_{n,k}f(b_k).\label{secondtemp}
\end{align}
It remains to find any explicit expression for $B_{n,k}$. To do this,  we first set up  certain  recurrence relation for the entries of $(B_{n,k})$.  What we obtained is  that for any $n-k\geq 1$,
\begin{align}
B_{n,k}=-\sum_{i=k}^{n-1}B_{i,k}\prod_{j=i}^{n-1}\frac{x_{N-j}-b_n}{b_j-b_n}
\label{inverse}
\end{align}
with the initial condition
$$
B_{k,k}=\prod_{i=0}^{k-1}\frac{1}{b_i-b_k}\prod_{i=1}^{N-k}\frac{1}{x_i-b_k}.
$$
As a matter of fact, \eqref{inverse} follows directly from the definition of inverse matrices \eqref{inversedef} as  below:
$$
(A_{n,k})(B_{n,k})=E_{N+1},
$$
where $E_{N+1}$ is  the identity matrix of order $N+1$.
Written out in explicit terms, it means that for $n-k\geq 1$,
\begin{align*}
A_{n,k}B_{k,k}+A_{n,k+1}B_{k+1,k}+\cdots+A_{n,n}B_{n,k}=0.
\end{align*}
Based on this relation, it is not hard to find
\begin{align*}
B_{n,k}=-\frac{A_{n,k}}{A_{n,n}}B_{k,k}-\frac{A_{n,k+1}}{A_{n,n}}B_{k+1,k}
-\cdots-\frac{A_{n,n-1}}{A_{n,n}}B_{n-1,k}.
\end{align*}
Finally, by solving the recurrence relation \eqref{inverse} with the initial condition $B_{k,k}$ by induction on $n-k\geq 0$, we have that
\begin{align}
B_{n,k}=\frac{x_{N-n+1}-b_n} {x_{N-n+1}-b_k}\prod_{i=1}^{N-n}\frac{1} {x_i-b_k}\prod_{i=0,i\neq k}^{n}\frac{1}{b_i-b_k}.\label{280-new}
\end{align}
Thus \eqref{secondcoefficient} follows after a direct substitution of \eqref{280-new} into  \eqref {secondtemp}. The lemma is proved.
\qed

Another ingredient we will use is  a basic fact about the symmetric-difference decomposition for the product of two theta functions. It is the foundation stone for our forthcoming discussions and  has already been proved  in \cite{wangjinpaper} by the first author.

\begin{yl}[{\rm Cf. \cite[Lemma 3.1]{wangjinpaper}}]\label{lemma2} Let $P(x)$ and $Q(x)$ be given by \eqref{PQdef}. Then
\begin{align}
y \theta(x y,x/y;p)=P(x)Q(y)-P(y)Q(x).\label{thetaid}
\end{align}
\end{yl}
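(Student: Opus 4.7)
The plan is to prove \eqref{thetaid} by direct series manipulation using the Jacobi triple product identity \eqref{jacobintheta}. The idea is to expand the left-hand side as a double bilateral sum indexed by $(m,n)\in\mathbb{Z}^2$, and then reorganize it by a parity-sensitive change of variables so that the sum cleanly splits into two pieces matching the two terms on the right.

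First, I would write
\begin{align*}
(p;p)_\infty^2\,\theta(xy,x/y;p)=\sum_{m,n\in\mathbb{Z}}(-1)^{m+n}p^{m(m-1)/2+n(n-1)/2}x^{m+n}y^{m-n},
\end{align*}
and substitute $k=m+n$, $j=m-n$. Since $m,n\in\mathbb{Z}$ forces $k\equiv j\pmod 2$, the sum splits into the even-even case $(k,j)=(2a,2b)$ and the odd-odd case $(k,j)=(2a+1,2b+1)$. A routine computation shows that the exponent of $p$ reduces to $a^2+b^2-a$ in the first case (with sign $+1$) and to $a^2+b^2+b$ in the second (with sign $-1$, since $(-1)^{k}=-1$ there).

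Second, each of the resulting double sums factorizes into a product of independent single sums over $a$ and $b$. I would then recognize, again via the Jacobi triple product, that
\begin{align*}
\sum_{a\in\mathbb{Z}}p^{a(a-1)}x^{2a}=(p^2;p^2)_\infty\theta(-x^2;p^2),\qquad \sum_{a\in\mathbb{Z}}p^{a^2}x^{2a}=(p^2;p^2)_\infty\theta(-px^2;p^2),
\end{align*}
together with their $y$-analogues. The sum $\sum_b p^{b(b+1)}y^{2b}$ arising from the odd case requires the index shift $b\to b-1$, producing a convenient factor $y^{-2}$ that combines with the $xy$ prefactor of that piece. Multiplying through by $y$ then gives
\begin{align*}
y(p;p)_\infty^2\,\theta(xy,x/y;p)=(p^2;p^2)_\infty^2\bigl[y\theta(-x^2;p^2)\theta(-py^2;p^2)-x\theta(-px^2;p^2)\theta(-y^2;p^2)\bigr].
\end{align*}

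Finally, the bracket on the right equals $\bigl(P(x)Q(y)-P(y)Q(x)\bigr)/(-p;p)_\infty^2$ by \eqref{PQdef}, and the elementary factorization $(p;p)_\infty(-p;p)_\infty=(p^2;p^2)_\infty$ cancels all remaining infinite-product prefactors, delivering \eqref{thetaid}. The main obstacle is the combinatorial bookkeeping in the first two steps: verifying the parity split, tracking the competing $p$-exponents $a^2+b^2-a$ versus $a^2+b^2+b$, and making sure the index shift absorbs the factor $y^{-2}$ so that both pieces re-express as genuine theta functions at $-x^2,-y^2,-px^2,-py^2$. Nothing deeper is required beyond two applications of the Jacobi triple product and the identity $(p;p)_\infty(-p;p)_\infty=(p^2;p^2)_\infty$.
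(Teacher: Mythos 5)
Your proof is correct: I checked the parity split (the $p$-exponents $a(a-1)+b^2$ with sign $+1$ and $a^2+b(b+1)$ with sign $-1$ are right), the four triple-product evaluations at $-x^2$, $-px^2$, $-y^2$, $-py^2$ with norm $p^2$, the index shift absorbing $y^{-2}$, and the final cancellation via $(p;p)_\infty(-p;p)_\infty=(p^2;p^2)_\infty$. Note that this paper does not actually prove the lemma — it cites \cite[Lemma 3.1]{wangjinpaper} — so there is no in-paper argument to compare against; your series-splitting derivation is the standard route to this symmetric-difference decomposition and is essentially what the cited reference uses, so you have in effect supplied the omitted proof.
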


At the end of this section, we  list some basic relations for the  elliptic $q,p$-shifted factorials which will be used later.

\begin{yl}[{\rm Cf. \cite[p. 301]{10}}] The following properties hold for theta functions and elliptic $q,p$-shifted factorials.
\begin{enumerate}[(i)]
\item $\theta(x;p)=-x\theta(1/x;p)$,~ $\theta(px;p)=-\displaystyle \frac{1}{x}\,\theta(x;p).$
\item The elliptic binomial coefficient
\begin{align}
\left[\begin{matrix}N\\ k\end{matrix}\right]_{q,p}:=\frac{(q;q,p)_N}{(q;q,p)_k(q;q,p)_{N-k}}
=\frac{q^{kN}}{\tau_q(k)}\frac{(q^{-N};q,p)_k}{(q;q,p)_k}.
\label{added-3}
\end{align}
\item\begin{align}&(ACq^k;q,p)_N=(AC;q,p)_{N}\frac{(ACq^N;q,p)_k}{(AC;q,p)_k},\label{added-1-1}\\
&(Aq^{-k}/C;q,p)_N=(A/C;q,p)_{N}q^{-kN}\frac{(Cq/A;q,p)_k}{(Cq^{-N+1}/A;q,p)_k}.\label{added-2}
\end{align}
\item \begin{align}
\theta(Cxq^{k},Cq^{k}/x;p)=\frac{(Cx,C/x;q,p)_{k+1}}{(Cx,C/x;q,p)_k}.\label{added-1}
\end{align}
\end{enumerate}
\end{yl}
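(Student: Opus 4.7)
The plan is to verify each of the four items by unwinding the defining product formulas and using elementary manipulations of theta functions; nothing here requires the machinery of $(f,g)$-inversion or the Askey--Wilson basis developed earlier.

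For item (i), I would work directly from the definition $\theta(x;p) = (x,p/x;p)_\infty$. Splitting off the first factor in $(1/x;p)_\infty = (1-1/x)(p/x;p)_\infty$ and comparing with $(x;p)_\infty = (1-x)(px;p)_\infty$ gives
\[
\frac{\theta(1/x;p)}{\theta(x;p)} = \frac{(1-1/x)(p/x;p)_\infty (px;p)_\infty}{(1-x)(p/x;p)_\infty (px;p)_\infty} = \frac{1-1/x}{1-x} = -\frac{1}{x},
\]
yielding the reflection formula $\theta(x;p) = -x\,\theta(1/x;p)$. The quasi-periodicity $\theta(px;p) = -x^{-1}\theta(x;p)$ follows from the same kind of shift: write $(px;p)_\infty = (x;p)_\infty/(1-x)$ and $(1/x;p)_\infty = (1-1/x)(p/x;p)_\infty$, then multiply.

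For item (iv), the claim is purely definitional: since $(x;q,p)_n = \prod_{j=0}^{n-1}\theta(xq^j;p)$, the ratio $(Cx,C/x;q,p)_{k+1}/(Cx,C/x;q,p)_k$ telescopes down to the single factor $\theta(Cxq^k;p)\theta(Cq^k/x;p)$, which is $\theta(Cxq^k,Cq^k/x;p)$ by the multi-argument convention. Item (iii) is the analogous telescoping observation at the level of $q$-shifts: to prove $(ACq^k;q,p)_N = (AC;q,p)_N \cdot (ACq^N;q,p)_k/(AC;q,p)_k$, both sides expand as products of $\theta$-factors indexed by consecutive powers of $q$, and a direct comparison of index ranges shows each side equals $\prod_{j=k}^{k+N-1}\theta(ACq^j;p)$. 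The second identity in (iii), involving $Aq^{-k}/C$, is obtained the same way after first applying the reflection formula of (i) to each of the $k$ new factors with negative exponent; this is where the $q^{-kN}$ and the $(Cq/A;q,p)_k/(Cq^{-N+1}/A;q,p)_k$ ratio come from, once the powers of $q$ are collected using $\sum_{j=0}^{N-1}(j-k) = \binom{N}{2} - kN$ and the like.

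The least mechanical item is (ii). The first equality is essentially the definition of the elliptic binomial coefficient, so the work lies in the second equality. I would convert $(q^{-N};q,p)_k = \prod_{j=0}^{k-1}\theta(q^{j-N};p)$ into a product in positive powers of $q$ by applying the reflection formula of (i) to each factor: this produces a prefactor $\prod_{j=0}^{k-1}(-q^{j-N}) = (-1)^k q^{-kN+\binom{k}{2}}$ together with $\prod_{j=0}^{k-1}\theta(q^{N-j};p) = (q;q,p)_N/(q;q,p)_{N-k}$, after re-indexing. Matching this against the elliptic binomial coefficient, absorbing the $(-1)^k q^{\binom{k}{2}}$ into the definition of $\tau_q(k)$, and solving for the ratio gives the stated identity. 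I expect this to be the only step that is at all calculational; the main obstacle is bookkeeping of the exponent of $q$ (keeping $\binom{k}{2}$, $kN$, and the $\tau_q(k)$ convention consistent), and once that is done the rest is immediate.
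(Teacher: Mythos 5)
The paper offers no proof of this lemma: it is stated as a list of standard facts with a citation to Gasper and Rahman, so there is nothing to compare your argument against except the definitions themselves. Your verification is correct and is the standard one — every item reduces to the product definitions of $\theta(x;p)$ and $(x;q,p)_n$ plus the reflection formula $\theta(x;p)=-x\,\theta(1/x;p)$ from item (i), with (iii) and (iv) being pure telescoping. Two bookkeeping points are worth tightening. In the second identity of (iii), reflecting only the $k$ new factors with negative $q$-exponent accounts for the numerator $(Cq/A;q,p)_k$, but to produce the denominator $(Cq^{-N+1}/A;q,p)_k$ you must also reflect the $k$ factors $\theta(Aq^{N-i}/C;p)$, $i=1,\dots,k$, dropped from the top of the product; once both groups are reflected the sign factors $(-A/C)^k$ cancel and the surviving power of $q$ is exactly $q^{-kN}$, as claimed. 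In (ii), note that $\tau_q$ is nowhere defined in the paper; your computation
\begin{align*}
(q^{-N};q,p)_k=(-1)^k q^{\binom{k}{2}-kN}\,\frac{(q;q,p)_N}{(q;q,p)_{N-k}}
\end{align*}
forces the convention $\tau_q(k)=(-1)^k q^{k(k-1)/2}$, which is indeed consistent with the paper's later use of $\tau_q(t)$ in $\prod_{i=0,i\neq t}^{k}\theta(q^{i-t};p)^{-1}=\tau_q(t)q^{t}/\bigl((q;q,p)_t(q;q,p)_{k-t}\bigr)$, so your "absorb the prefactor into $\tau_q$" step is legitimate rather than circular.
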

We will also need Frenkel and Turaev's summation formula which is one of the most fundamental results for
VWP-balanced elliptic  hypergeometric series.

\begin{yl} [Frenkel and Turaev's summation formula. Cf. \cite{frenkel} or \mbox{\rm \cite[Eq. (11.2.25)]{10}}]\label{lemma28} For any integer $n\geq 0$ and complex numbers $a,b,c,d,e$ with $a^2q^{n+1}=bcde$, there holds
\begin{align}
{}_{10}V_9(a;b,c,d,e,q^{-n};q,p)=\frac{(aq,aq/(bc),aq/(bd),aq/(cd);q,p)_n}{(aq/b,aq/c,aq/d,aq/(bcd);q,p)_n}. \label{frenkel-sum}
\end{align}
\end{yl}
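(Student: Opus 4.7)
The plan is to prove Frenkel and Turaev's summation \eqref{frenkel-sum} by induction on $n$, following the classical strategy used for Jackson's ${}_8\phi_7$ sum but adapted to the elliptic setting via the quasi-periodicity of the theta function.

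For $n = 0$ both sides equal $1$, so the base case is immediate. For the inductive step, fix $a, b, c, d$ and regard both sides of \eqref{frenkel-sum} as functions of the free parameter $e = a^2 q^{n+1}/(bcd)$. Using the identity $\theta(px; p) = -x^{-1}\theta(x; p)$ from the paper's list of basic theta-function properties, one checks that the ratio $\text{LHS}/\text{RHS}$ is a meromorphic function of $e$ with no poles and invariant under $e \mapsto pe$. A standard elliptic Liouville argument then forces this ratio to be constant in $e$, and evaluation at a distinguished point where the series degenerates (for instance specializing one of the top parameters to a suitable negative power of $q$, causing the sum to truncate earlier) identifies the constant as $1$ by the inductive hypothesis on $n - 1$.

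An alternative route, more in line with the methods developed in this paper, is to apply the $(f, g)$-expansion formula (Lemma \ref{fglemma}) with $f(x, y) = g(x, y) = y \, \theta(xy, x/y; p)$, i.e.\ the symmetric-difference decomposition of Lemma \ref{lemma2}, together with tailored sequences $\{x_i\}$ and $\{b_i\}$. Expanding a suitable elliptic rational function in the resulting Lagrange-type basis \eqref{13-13} of the space $\mathcal{L}_N(P(x), Q(x))$ and matching coefficients on both sides yields \eqref{frenkel-sum}.

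The principal obstacle is bookkeeping the elliptic balance condition $a^2 q^{n+1} = bcde$ throughout the argument. In the basic ($p = 0$) case one simply compares polynomial degrees in $e$; in the elliptic case one must track the quasi-periodicity type of each theta-function factor and confirm that every manipulation preserves the correct modular weight before invoking the elliptic Liouville step or the interpolation comparison. Once this is under control, the argument proceeds in close analogy with the classical derivation of Jackson's sum.
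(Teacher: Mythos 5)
The paper offers no proof of this statement: Lemma \ref{lemma28} is imported as a known external result (Frenkel--Turaev, or Gasper--Rahman Eq.\ (11.2.25)) and is used as a black box to evaluate the inner sums $\mathbf{S}_{N,t}$ and $\mathbf{T}_{N,t}$ in both proofs of Theorem \ref{mainthm}. So your attempt must stand on its own, and as written it has two genuine gaps.

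First, in the Liouville route you ``fix $a,b,c,d$ and regard both sides as functions of the free parameter $e=a^2q^{n+1}/(bcd)$''; but once $a,b,c,d,n$ are fixed the balance condition determines $e$, so there is no free parameter, and if instead you let $e$ roam freely the series is no longer elliptically balanced, in which case the termwise invariance under $e\mapsto pe$ that your argument needs actually fails --- the balance $(qa_6\cdots a_{10})^2=(a q)^{4}$ is precisely what makes the summand elliptic. The correct setup varies one parameter, say $b$, with $e=a^2q^{n+1}/(bcd)$ tied to it; and then the substance of the proof is exactly the two things you assert without verification, namely that all apparent poles of $\mathrm{LHS}/\mathrm{RHS}$ cancel and that there is a clean degeneration point linking the identity for $n$ to the one for $n-1$ (the natural specializations make both sides of \eqref{frenkel-sum} vanish or become $0/0$, so ``a suitable negative power of $q$'' needs to be exhibited, not gestured at). Second, your alternative route through the paper's interpolation machinery is circular in this context: Theorems \ref{mainthm} and \ref{generialized} are themselves proved by invoking Lemma \ref{lemma28}, and even granting them, what they return (see \eqref{xe33-lz3}) is only the special case of \eqref{frenkel-sum} with $bc=a$ (there $b=C/x$, $c=Cx$, $bc=C^2=a$), not the general five-parameter statement. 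Either cite the lemma as the paper does, or write out the full induction with the pole analysis and the contiguous relation (equivalently, the theta addition formula driving the step $n-1\to n$) made explicit.
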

\section{The proofs  of the main theorems}
\subsection{The proofs of Theorems  \ref{mainthmchenfu} and \ref{mainthmyl-latest}}
As planned, we proceed to prove Theorem \ref{mainthmchenfu} first.

\begin{proof}[The proof of Theorem \ref{mainthmchenfu}]  We proceed as follows. At first, in light of Lemma \ref{polybasistwo}, we  only  need  to consider
\begin{align*}
F(x)=\sum_{k=0}^{N}\lambda_kx^k
\end{align*}
and thereby have the expansion
\begin{align}F(x)=\sum_{k=0}^{N}\Lambda_k(N)
\prod_{i=0}^{k-1}(b_i-x)\prod_{i=1}^{N-k}(x_i-x),\label{28-8-888333-new}
\end{align}
  where $\Lambda_k(N)$ is uniquely given by \eqref{secondcoefficient}. Next,  with the same $P(x)$ and $Q(x)$  given by \eqref{PQdef}, we  make the replacement or transformation of the parameters $x_i,b_i$ and  variable $x$
\begin{align}\label{transformation-new}
\left\{
  \begin{array}{rcl}
    x_i&\to& P(x_i)/Q(x_i)\\
    b_i&\to& P(b_i)/Q(b_i)\\ x&\to& P(x)/Q(x),
  \end{array}
\right.
\end{align}
for   \eqref{28-8-888333-new}, thereby obtaining
\begin{align*}
&F\big(P(x)/Q(x)\big)\\
&=
\sum_{k=0}^{N}\Lambda_k(N)
\prod_{i=0}^{k-1}\frac{P(b_i)Q(x)-Q(b_i)P(x)}{Q(b_i)Q(x)} \prod_{i=1}^{N-k}\frac{P(x_i)Q(x)-Q(x_i)P(x)}{Q(x_i)Q(x)}\\
&=\frac{1}{Q(x)^N}\sum_{k=0}^{N}\Lambda_k(N)\prod_{i=0}^{k-1}\frac{x\theta(b_ix,b_i/x;p)}{Q(b_i)}\prod_{i=1}^{N-k}\frac{x\theta(x_ix,x_i/x;p)}{Q(x_i)}.
\end{align*}
Note that the last equality results from Lemma \ref{lemma2}.
By considering  $$f(x):=Q(x)^NF(P(x)/Q(x))\in \mathcal{L}_N(P(x),Q(x)),$$ we obtain
\begin{align}
f(x)=x^N\sum_{k=0}^{N}\Lambda_k(N)
\prod_{i=0}^{k-1}\frac{\theta(b_ix,b_i/x;p)}{Q(b_i)}
\prod_{i=1}^{N-k}\frac{\theta(x_ix,x_i/x;p)}{Q(x_i)}.\label{RESULKT-new}
\end{align}
Note that, under the same transformation \eqref{transformation-new}, the coefficient $\Lambda_n(N)$ given by \eqref{secondcoefficient} takes the form
\begin{align*}
\Lambda_n(N)
&=\sum_{k=0}^{n}\frac{b_nQ(b_k)\theta(x_{N-n+1}b_n,x_{N-n+1}/b_n;p)}
{b_kQ(b_n)\theta(x_{N-n+1}b_k,x_{N-n+1}/b_k;p)}F\bigg(\frac{P(b_k)}{Q(b_k)}\bigg)\\
&\qquad\qquad\times
\prod_{i=1}^{N-n}\frac{Q(x_i)Q(b_k)}{b_k\theta(x_ib_k,x_i/b_k;p)}\prod_{i=0,i\neq k}^{n}
\frac{Q(b_i)Q(b_k)} {b_k\theta(b_ib_k,b_i/b_k;p)}\\
&=\frac{b_n}{Q(b_n)}\sum_{k=0}^{n}\frac{Q(b_k)}{b_k^{N+1}}\frac{\theta(x_{N-n+1}b_n,x_{N-n+1}/b_n;p)}
{\theta(x_{N-n+1}b_k,x_{N-n+1}/b_k;p)}f(b_k)\\
&\qquad\qquad\times\prod_{i=1}^{N-n}\frac{Q(x_i)}{\theta(x_ib_k,x_i/b_k;p)}\prod_{i=0,i\neq k}^{n}
\frac{Q(b_i)} {\theta(b_ib_k,b_i/b_k;p)}.
\end{align*} In the sequel, we redefine and then evaluate
\begin{align*}
H_n(N)&:=\frac{Q(b_n)\Lambda_n(N)}{b_n\prod_{i=0}^{n}Q(b_i)\prod_{i=1}^{N-n}Q(x_i)}\\
&=\sum_{k=0}^{n}\frac{1}{b_k^{N+1}}\frac{\theta(x_{N-n+1}b_n,x_{N-n+1}/b_n;p)}
{\theta(x_{N-n+1}b_k,x_{N-n+1}/b_k;p)}f(b_k)\prod_{i=1}^{N-n}\frac{1}{\theta(x_ib_k,x_i/b_k;p)}\prod_{i=0,i\neq k}^{n}
\frac{1} {\theta(b_ib_k,b_i/b_k;p)}.
\end{align*}
We have established \eqref{coeffi-chenfu-new}. As a last step, by substituting $H_n(N)$ for $\Lambda_n(N)$ in \eqref{RESULKT-new}, we readily find that
\begin{align*}
f(x)&=x^N\sum_{k=0}^{N}H_k(N)\bigg(b_k\prod_{i=0}^{k-1}Q(b_i)\prod_{i=1}^{N-k}Q(x_i)\bigg)
\prod_{i=0}^{k-1}\frac{\theta(b_ix,b_i/x;p)}{Q(b_i)}\prod_{i=1}^{N-k}\frac{\theta(x_ix,x_i/x;p)}{Q(x_i)}\\
&=x^N\sum_{k=0}^{N}H_k(N)b_k
\prod_{i=0}^{k-1}\theta(b_ix,b_i/x;p)\prod_{i=1}^{N-k}\theta(x_ix,x_i/x;p).
\end{align*}
Hence, we have proven the theorem.
\end{proof}

\begin{remark}
As far as we are aware,  the transformation \eqref{transformation-new} is of value since it may serve as a bridge connecting theta or elliptic function identities  such as \eqref{weierstrass} and polynomial identities like \eqref{tripleknown}. We refer the reader to \cite{wangjinpaper} for more  applications of \eqref{transformation-new}.
\end{remark}

\begin{remark} Analyzing the proofs of Theorems \ref{mainthmyl} and \ref{mainthmchenfu}, we think that any base of the ring
 of polynomials can be used to elliptic interpolation, provided that the expansion coefficients can be found easily via the technique of matrix inversions while the transformation \eqref{transformation-new} is available.
\end{remark}

In regard to  Theorem \ref{mainthmyl}, we remark that in \cite{wangjinpaper},  the author established Theorem \ref{mainthmyl} with the help of  Lemma \ref{polybasis}  and Transformation \eqref{transformation-new} but
 under a prior condition that two sequences $x_i\neq b_j$ for all $1\leq i,j\leq N$. This restriction arises from \eqref{wangcoeff} of Lemma \ref{polybasis}. As a matter of fact, if $x_i=b_j$ for all $1\leq i=j\leq N$, then we have a interpolation formula, viz., Theorem \ref{mainthmyl-latest}. The  proof is given as follows.

\begin{proof}[The proof of Theorem \ref{mainthmyl-latest}] It suffices to note that in \eqref{wangcoeff} of Lemma \ref{polybasis},  when $x_n=b_n$, the limitation
\begin{align*}
\lambda_n&=\lim_{x_n\to b_n}\bigg\{(x_n-b_n)\sum_{k=0}^{n}
f(b_k)\prod_{i=n}^{N}\frac{1} {x_i-b_k}\prod_{i=0,i\neq k}^{n}\frac{1}{b_i-b_k}\bigg\}=
f(b_n)\prod_{i=0,i\neq n}^{N}\frac{1}{b_i-b_n}.
%\label{wangcoeff-latest}
\end{align*}
As such, we rediscover the Lagrange interpolation formula
\begin{align}
f(x)=\sum_{k=0}^{N}f(b_k)\prod_{i=0,i\neq k}^{N}\frac{b_i-x}{b_i-b_k}.\label{assume-new2}
\end{align}
As been done for Theorem  \ref{mainthmchenfu}, we  apply Transformation  \eqref{transformation-new} to both sides of             \eqref{assume-new2},
obtaining
\begin{align*}
&f\big(P(x)/Q(x)\big)\\
&=
\sum_{k=0}^{N}f(P(b_k)/Q(b_k))
\prod_{i=0,i\neq k}^{N}\frac{P(b_i)Q(x)-Q(b_i)P(x)}{Q(b_i)Q(x)} \prod_{i=0,i\neq k}^{N}\frac{Q(b_i)Q(b_k)}{P(b_i)Q(b_k)-Q(b_i)P(b_k)}\\
&=\frac{x^N}{Q(x)^N}\sum_{k=0}^{N}\frac{Q(b_k)^Nf(P(b_k)/Q(b_k))}{b_k^N}\prod_{i=0,i\neq k}^{N}\frac{\theta(b_ix,b_i/x;p)}{\theta(b_ib_k,b_i/b_k)}.
\end{align*}
By abbreviating $Q(x)^Nf\big(P(x)/Q(x)\big)\in \mathcal{L}_N(P(x),Q(x)) $ with $f(x)$, we obtain \eqref{13-13} at once.
\end{proof}

When the base $p=0$ in \eqref{13-13},  we achieve at once an interesting interpolation formula for rational functions at points $b_0,b_1,\ldots,b_N$.
\begin{lz}
For nonnegative integers  $m\leq N$, there holds
\begin{align*}
\left(\frac{1+x^2}{x}\right)^m=\sum_{k=0}^{N}\left(\frac{1+b_k^2}{b_k}\right)^m \prod_{i=0,i\neq k}^{N}
\frac{(1-b_ix)(1-b_i/x)}
{(1-b_ib_k)(1-b_i/b_k)}.
\end{align*}
\end{lz}
\subsection{Two proofs for  Theorem \ref{mainthm}}

Now we are in a good position to  show Theorem \ref{mainthm}  in full details.
As we will see below, it is  implied by   both Theorem  \ref{mainthmyl} and Theorem \ref{mainthmchenfu}.

%Using Theorem \ref{mainthmchenfu} at first, we find that
\begin{proof}[The first proof of Theorem \ref{mainthm}]  It is a direct application of Theorem \ref{mainthmchenfu}. To make this clear, we first  exchange the   order
of summations in \eqref{mathdlchenfu} to get
\begin{align}f(x)=x^N\sum_{t=0}^{N}\frac{f(b_t)}{b_t^{N+1}}\prod_{i=0}^{t-1}\frac{\theta(b_ix,b_i/x;p)}{\theta(b_ib_t,b_i/b_t;p)} \mathbf{S}_{N,t},\label{finale}
\end{align}
where
\begin{align*}
\mathbf{S}_{N,t}:=\sum_{k=t}^{N}b_k\frac{\theta(x_{N-k+1}b_k,x_{N-k+1}/b_k;p)}
{\theta(x_{N-k+1}b_t,x_{N-k+1}/b_t;p)}\prod_{i=1}^{N-k}\frac{\theta(x_ix,x_i/x;p)}{\theta(x_ib_t,x_i/b_t;p)}
\prod_{i=t+1}^{k}\frac{\theta(b_{i-1}x,b_{i-1}/x;p)} {\theta(b_ib_t,b_i/b_t;p)}.
\end{align*} Next we choose $$x_i=Aq^{-i},  b_i=Cq^i.$$ In this case, it is easy to check that
\begin{align*}
&\prod_{i=1}^{N-k}\frac{\theta(x_ix,x_i/x;p)}{\theta(x_ib_t,x_i/b_t;p)}=\frac{(xAq^{k-N},Aq^{k-N}/x;q,p)_{N-k}}
{(ACq^{t+k-N},Aq^{-t+k-N}/C;q,p)_{N-k}},\\
&\prod_{i=t+1}^{k}\frac{\theta(b_{i-1}x,b_{i-1}/x;p)} {\theta(b_ib_t,b_i/b_t;p)}=\frac{(xCq^t,Cq^{t}/x;q,p)_{k-t}}{(q,C^2q^{2t+1};q,p)_{k-t}}.
\end{align*}
Therefore, upon changing the index $k$ to $K$ by the relation $K=k-t$, we are able to compute
\begin{align*}
\mathbf{S}_{N,t}&=Cq^{t}\sum_{K=0}^{N-t}q^{K}\frac{\theta(ACq^{2K+2t-N-1},Aq^{-N-1}/C;p)}
{\theta(ACq^{K+2t-N-1},Aq^{K-N-1}/C;p)}\\
&\times\frac{(xAq^{K+t-N},Aq^{K+t-N}/x;q,p)_{N-t-K}}
{(ACq^{K+2t-N},Aq^{K-N}/C;q,p)_{N-t-K}}\,
\frac{(xCq^t,Cq^{t}/x;q,p)_{K}}{(q,C^2q^{2t+1};q,p)_{K}}.
\end{align*}
Since
\begin{align*}
&\frac{(xAq^{K+t-N},Aq^{K+t-N}/x;q,p)_{N-t-K}}
{(ACq^{K+2t-N},Aq^{K-N}/C;q,p)_{N-t-K}}=\frac{(xAq^{t-N},Aq^{t-N}/x;q,p)_{N-t}}
{(ACq^{2t-N},Aq^{-N}/C;q,p)_{N-t}}\times\frac{(ACq^{2t-N},Aq^{-N}/C;q,p)_{K}}{(xAq^{t-N},Aq^{t-N}/x;q,p)_{K}},
\end{align*}
it is easily found that
\begin{multline*}
 \mathbf{S}_{N,t}=Cq^{t}\frac{(xAq^{t-N},Aq^{t-N}/x;q,p)_{N-t}}
{(ACq^{2t-N},Aq^{-N}/C;q,p)_{N-t}}\\
\times\sum_{K=0}^{N-t}q^{K}\frac{\theta(ACq^{2K+2t-N-1};p)}
{\theta(ACq^{2t-N-1};p)}
\frac{\theta(ACq^{2t-N-1},Aq^{-N-1}/C;p)}
{\theta(ACq^{K+2t-N-1},Aq^{K-N-1}/C;p)}\\
\times\frac{(ACq^{2t-N},Aq^{-N}/C;q,p)_{K}}{(q,C^2q^{2t+1};q,p)_{K}}\,
\frac{(xCq^t,Cq^{t}/x;q,p)_{K}}{(Aq^{t-N}/x,xAq^{t-N};q,p)_{K}}.
\end{multline*}
Using the relation
\begin{align*}
\frac{\theta(ACq^{2t-N-1},Aq^{-N-1}/C;p)}
{\theta(ACq^{K+2t-N-1},Aq^{K-N-1}/C;p)}=\frac{(ACq^{2t-N-1},Aq^{-N-1}/C;q,p)_K}
{(ACq^{2t-N},Aq^{-N}/C;q,p)_K}
\end{align*}
and then recasting the last sum in  standard notation of elliptic hypergeometric series,  we arrive at
\begin{align*}
\mathbf{S}_{N,t}&=Cq^{t}\frac{(xAq^{t-N},Aq^{t-N}/x;q,p)_{N-t}}
{(ACq^{2t-N},Aq^{-N}/C;q,p)_{N-t}}\\
&\times{}_{10}V_9\big(ACq^{2t-N-1};Aq^{-N-1}/C,Cq^tx,Cq^t/x, ACq^{t},q^{-(N-t)};q,p\big).
\end{align*}
Now, by appealing to Frenkel and Turaev's summation formula \eqref{frenkel-sum},
 we readily find that
 \begin{align*}
\mathbf{S}_{N,t}&=Cq^{t}\frac{(xAq^{t-N},Aq^{t-N}/x;q,p)_{N-t}}
{(ACq^{2t-N},Aq^{-N}/C;q,p)_{N-t}}\times\frac{(ACq^{2t-N},Cxq^{t+1},Cq^{t+1}/x,Aq^{-N}/C;q,p)_{N-t}}
{(C^2q^{2t+1},Aq^{t-N}/x,Axq^{t-N},q;q,p)_{N-t}}
\\
&=Cq^{t}\frac{(Cxq^{t+1},Cq^{t+1}/x;q,p)_{N-t}}
{(q,C^2q^{2t+1};q,p)_{N-t}}.
\end{align*}
A substitution of this expression simplifies the preceding expansion \eqref{finale} to
\begin{align}f(x)&=x^N\sum_{t=0}^{N}\frac{f(Cq^{t})}{(Cq^{t})^{N}}\frac{(Cx,C/x;q,p)_t}{(C^2q^t,q^{-t};q,p)_t}\frac{(Cxq^{t+1},Cq^{t+1}/x;q,p)_{N-t}}
{(q,C^2q^{2t+1};q,p)_{N-t}}\nonumber\\
&=\bigg(\frac{x}{C}\bigg)^N\frac{(Cx,C/x;q,p)_{N+1}}{(q;q,p)_{N}}\sum_{t=0}^{N}\left[\begin{matrix}N\\ t\end{matrix}\right]_{q,p}\tau_q(t)q^{t-tN}\frac{\theta(C^2q^{2t};p)f(Cq^t)}
{(C^2q^{t};q,p)_{N+1}\theta(Cxq^{t},Cq^{t}/x;p)}\nonumber\\
&=\bigg(\frac{x}{C}\bigg)^N\frac{(Cx,C/x;q,p)_{N+1}}{(q,C^2q;q,p)_{N}}
\sum_{t=0}^{N}\frac{\theta(C^2q^{2t};p)}
{\theta(C^2;p)}\frac{(q^{-N},C^2;q,p)_{t}}
{(q,C^2q^{N+1};q,p)_{t}}\frac{f(Cq^t)q^{t}}{\theta(Cxq^{t},Cq^{t}/x;p)}.\label{better}
\end{align}
Again, referring to  \eqref{added-1}, it is clear that
$$
\frac{1}{\theta(Cxq^{t},Cq^{t}/x;p)}=\frac{1}{\theta(Cx,C/x;p)}\frac{(C/x,Cx;q,p)_t}{(Cxq,Cq/x;q,p)_t}.
$$
Therefore, we reduce  \eqref{better} to
\begin{align*}f(x)=\bigg(\frac{x}{C}\bigg)^N\frac{(Cxq,Cq/x;q,p)_{N}}{(q,C^2q;q,p)_{N}}
\sum_{t=0}^{N}\frac{\theta(C^2q^{2t};p)}
{\theta(C^2;p)}\frac{(C^2,C/x,Cx,q^{-N};q,p)_{t}}
{(q,Cxq,Cq/x,C^2q^{N+1};q,p)_{t}}f(Cq^t)q^{t}.
%\label{expansionformula}
\end{align*}
This is precisely what we want.  The theorem is proved.
\end{proof}

The next derivation for  Theorem \ref{mainthm} is based on Theorem \ref{mainthmyl}.

\begin{proof}[The second proof of Theorem \ref{mainthm}]  It only needs to specialize Theorem \ref{mainthmyl}  to the case  $$x_i=Bq^{i-1}, b_i=Cq^{i}.$$
Consequently,  the expansion \eqref{13} reduces to
\begin{align*}
\frac{f(x)}{(Bx,B/x;q,p)_N}
=x^N\sum_{k=0}^{N}H_k(N)
\frac{(Cx,C/x;q,p)_{k}}{(Bx,B/x;q,p)_k},
\end{align*}
where the coefficient $H_k(N)$ is given by \eqref{coeffi}, viz.,
\begin{align*}
&H_k(N)=Cq^{k}~\theta(BCq^{2k-1},B/(Cq);p)\\
&\times\sum_{t=0}^{k}\frac{1}{(Cq^t)^{N+1}}
\frac{f(Cq^t)\theta(C^2q^{2t};p)} {(BCq^{t+k-1},Bq^{k-1-t}/C;q,p)_{N-k+1}(C^2q^t;q,p)_{k+1}
}\prod_{i=0,i\neq t}^{k}\frac{1}{\theta(q^{i-t};p)}.
\end{align*}
Note that
$$
\prod_{i=0,i\neq t}^{k}\frac{1}{\theta(q^{i-t};p)}=\frac{\tau_q(t)q^t}{(q;q,p)_t(q;q,p)_{k-t}}.
$$
Therefore,
\begin{align*}
H_k(N)&=\frac{q^{k}}{C^N}~\theta(BCq^{2k-1},B/(Cq);p)\\
&\times\sum_{t=0}^{k}
\frac{f(Cq^t)\theta(C^2q^{2t};p)} {(BCq^{t+k-1},Bq^{k-1-t}/C;q,p)_{N-k+1}(C^2q^t;q,p)_{k+1}}\frac{\tau_q(t)q^{-Nt}}{(q;q,p)_t(q;q,p)_{k-t}}.
\end{align*}
Substituting the above and exchanging the order of summations gives rise to
 \begin{align}
\frac{f(x)}{(Bx,B/x;q,p)_N}=\bigg(\frac{x}{C}\bigg)^N\theta(B/(Cq);p)
\sum_{t=0}^{N}\tau_q(t)q^{-Nt}\frac{\theta(C^2q^{2t};p)}{(q;q,p)_t}f(Cq^t)~\mathbf{T}_{N,t},\label{sumtemp}
\end{align}
where, we write $\mathbf{T}_{N,t}$ for the corresponding inner sum, i.e.,
\begin{align*}
\mathbf{T}_{N,t}&:=
\sum_{k=t}^{N}
\frac{\theta(BCq^{2k-1};p)} {(BCq^{t+k-1},Bq^{k-1-t}/C;q,p)_{N-k+1}(C^2q^t;q,p)_{k+1}
}\\
&\qquad\times \frac{(Cx,C/x;q,p)_{k}}{(Bx,B/x;q,p)_k}\frac{q^k}{(q;q,p)_{k-t}}.
\end{align*}
Further, by changing the index $k$ of summation to $K$ by the relation $K=k-t$, we arrive at
\begin{align*}
\mathbf{T}_{N,t}&=\frac{\theta(BCq^{2t-1};p)} {(BCq^{2t-1},Bq^{-1}/C;q,p)_{N-t+1}(C^2q^t;q,p)_{t+1}}\frac{(Cx,C/x;q,p)_{t}}{(Bx,B/x;q,p)_t}q^t\\
&\times\sum_{K=0}^{N-t}\frac{\theta(BCq^{2K+2t-1};p)}{\theta(BCq^{2t-1};p)}
\frac{(BCq^{2t-1},Bq^{-1}/C;q,p)_{K}} {(q,C^2q^{2t+1};q,p)_{K}}
\frac{(Cxq^{t},Cq^{t}/x;q,p)_{K}}{(Bxq^{t},Bq^{t}/x;q,p)_{K}}q^{K}
\\
&=\frac{\theta(BCq^{2t-1};p)} {(BCq^{2t-1},Bq^{-1}/C;q,p)_{N-t+1}(C^2q^t;q,p)_{t+1}}\frac{(Cx,C/x;q,p)_{t}}{(Bx,B/x;q,p)_t}q^t\\
&\qquad\qquad\times{}_{10}V_9\big(BCq^{2t-1};Bq^{-1}/C,Cxq^t,Cq^t/x, BCq^{t+N},q^{-(N-t)};q,p\big).
\end{align*}
Now, by Frenkel and Turaev's summation formula \eqref{frenkel-sum},
 we can evaluate $\mathbf{T}_{N,t}$ in closed form:
\begin{align*}
\mathbf{T}_{N,t}
&=\frac{\theta(BCq^{2t-1};p)} {(BCq^{2t-1},Bq^{-1}/C;q,p)_{N-t+1}(C^2q^t;q,p)_{t+1}}\frac{(Cx,C/x;q,p)_{t}}{(Bx,B/x;q,p)_t}q^t\\
&\qquad\qquad\times\frac{(B C q^{2 t},C q^{t+1}/x,C x q^{t+1},B/C;q,p)_{N-t}}{(C^2 q^{2 t+1},B q^t/x, B x q^t,q;q,p)_{N-t}}\\
&=\frac{(Cx,C/x;q,p)_{N+1}}{\theta(Bq^{-1}/C;p)(C^2;q,p)_{N+1}(Bx,B/x;q,p)_N}\\
&\qquad\qquad\times\frac{q^t}{(q;q,p)_{N-t}}\frac{(C^2;q,p)_{t}} {\theta(Cxq^t,Cq^t/x;p)(C^2q^{N+1};q,p)_{t}}.
\end{align*}
In the sequel, on substituting this computational result into \eqref{sumtemp}, we have the expansion
\begin{align*}
\frac{f(x)}{(Bx,B/x;q,p)_N}&=\bigg(\frac{x}{C}\bigg)^N\frac{(Cx,C/x;q,p)_{N+1}}{(C^2;q,p)_{N+1}(Bx,B/x;q,p)_N}\\
&\times\sum_{t=0}^{N}\tau_q(t)q^{t-Nt}\frac{f(Cq^t)}{(q;q,p)_t(q;q,p)_{N-t}}
\frac{\theta(C^2q^{2t};p)(C^2;q,p)_{t}} {\theta(Cxq^t,Cq^t/x;p)(C^2q^{N+1};q,p)_{t}}\\
&=\bigg(\frac{x}{C}\bigg)^N\frac{(Cx,C/x;q,p)_{N+1}}{(C^2;q,p)_{N+1}(q,Bx,B/x;q,p)_N}\\
&\times\sum_{t=0}^{N}\left[\begin{matrix}N\\ t\end{matrix}\right]_{q,p}\tau_q(t)q^{t-Nt}\frac{\theta(C^2q^{2t};p)(C^2;q,p)_{t}} {\theta(Cxq^t,Cq^t/x;p)(C^2q^{N+1};q,p)_{t}}f(Cq^t).
\end{align*}
At this stage, we utilize the properties \eqref{added-3} and \eqref{added-1} to simplify the last expansion, obtaining
\begin{align*}
f(x)&=\bigg(\frac{x}{C}\bigg)^N\frac{(Cxq,Cq/x;q,p)_{N}}{(q,C^2q;q,p)_{N}}\\
&\times\sum_{t=0}^{N}\frac{\theta(C^2q^{2t};p)} {\theta(C^2;p)}\frac{(C^2,C/x,Cx,q^{-N};q,p)_{t}} {(q,Cxq,Cq/x,C^2q^{N+1};q,p)_{t}}f(Cq^t)q^{t}.
\end{align*}
 It gives the complete proof of the theorem.
\end{proof}
\begin{remark}
It is worth pointing out that \eqref{xe33-3} is   independent of $A$ and $B$, although we derive \eqref{xe33-3}  by setting either $x_i=Aq^{-i}$ or $x_i=Bq^{i-1}$ .  We believe that other choices  for $x_i$ and $b_i$ also deserve further study, in order to find interpolation formulas in closed form.
\end{remark}
\subsection{The proof of Theorem \ref{generialized} and a characterization of $W_c^N$}
Indeed, a combination of Theorem  \ref{mainthm} with the basic relation \eqref{pqdecomp} or Lemma \ref{lemma2} leads us to a proof of Theorem \ref{generialized}.

\begin{proof}[The proof of Theorem \ref{generialized}] Given such $f(x)\in \mathcal{L}_{N_0}(P(x),Q(x))$, we only need to consider the theta function
\begin{align}
F(x)=f(x)\prod_{i=1}^mx^{N_i}(A_ix,A_i/x;q,p)_{N_i}.\label{Karlsson}
\end{align}
In this case, by Proposition \ref{closed}, we see that  $F(x)\in \mathcal{L}_{N}(P(x),Q(x))$ being of degree $N=N_0+N_1+\cdots+N_m$. Applying Theorem \ref{mainthm} to $F(x)$, we thereby have the expansion
\begin{align*}&f(x)\prod_{i=1}^mx^{N_i}(A_ix,A_i/x;q,p)_{N_i}=\bigg(\frac{x}{C}\bigg)^{N}\frac{(Cxq,Cq/x;q,p)_{N}}{(q,C^2q;q,p)_{N}}\nonumber\\
&\qquad\times\sum_{k=0}^{N}\frac{\theta(C^2q^{2k};p)} {\theta(C^2;p)}\frac{(C^2,C/x,Cx,q^{-N};q,p)_{k}} {(q,Cxq,Cq/x,C^2q^{N+1};q,p)_{k}}q^{k}\\
&\qquad\times f(Cq^k)\big(Cq^k\big)^{N-N_0}\prod_{i=1}^m(A_iCq^k,A_iq^{-k}/C;q,p)_{N_i}.
\end{align*}
By \eqref{added-1-1} and \eqref{added-2}, it is easy to check that
\begin{align*}
&\prod_{i=1}^m(A_iCq^k,A_iq^{-k}/C;q,p)_{N_i}\\
&=q^{-k(N-N_0)}\prod_{i=1}^m(A_iC,A_i/C;q,p)_{N_i}\prod_{i=1}^m
\frac{(A_iCq^{N_i},Cq/A_i;q,p)_k}{(Cq^{-N_i+1}/A_i,A_iC;q,p)_k}.
\end{align*}
In conclusion, we obtain
\begin{align*}&f(x)=\bigg(\frac{x}{C}\bigg)^{N_0}\frac{(Cxq,Cq/x;q,p)_{N}}{(q,C^2q;q,p)_{N}}\prod_{i=1}^m\frac{(A_iC,A_i/C;q,p)_{N_i}}{(A_ix,A_i/x;q,p)_{N_i}}\nonumber\\
&\times\sum_{k=0}^{N}\frac{\theta(C^2q^{2k};p)} {\theta(C^2;p)}\frac{(C^2,C/x,Cx,q^{-N};q,p)_{k}} {(q,Cxq,Cq/x,C^2q^{N+1};q,p)_{k}}q^{k}f(Cq^k)\prod_{i=1}^m\frac{(A_iCq^{N_i},Cq/A_i;q,p)_k}{(Cq^{-N_i+1}/A_i,A_iC;q,p)_k}.
\end{align*}
This completes the proof of the theorem.
\end{proof}

It seems interesting that we are able to characterize,  by making use of Schlosser and Yoo's relevant results and our argument,  the set $W_c^N$  in terms of   $P(x)$ and $Q(x)$.

\begin{proof}[The proof of Theorem \ref{chara}] We begin with proving \eqref{oneoneone}  first. For this, recall that for any integer $N\geq 0$, by Schlosser's conclusion \cite[Lemma 4.1]{schlosseradd}, we have the expansion
\begin{align}
\frac{g_N(x)}{(cx,c/x;q,p)_N}=\sum_{k=0}^{N}f_k\frac{(ax,a/x;q,p)_k}{(cx,c/x;q,p)_k}.\label{5.4}
\end{align}
It can be reformulated as
\begin{align}
g_N(x)=\sum_{k=0}^{N}f_k(ax,a/x;q,p)_k(cxq^k,cq^k/x;q,p)_{N-k}.\label{oneone}
\end{align}
Observe that
\begin{align*}
(ax,a/x;q,p)_k&=\prod_{i=0}^{k-1}\theta(aq^ix,aq^i/x;p)=\frac{1}{x^k}\prod_{i=0}^{k-1}\big(P(aq^i)Q(x)-Q(aq^i)P(x)\big),\\
(cxq^k,cq^k/x;q,p)_{N-k}&=\prod_{i=0}^{N-k-1}\theta(cq^{k+i}x,cq^{k+i}/x;p)\\
&=\frac{1}{x^{N-k}}\prod_{i=0}^{N-k-1}\big(P(cq^{k+i})Q(x)-Q(cq^{k+i})P(x)\big).
\end{align*}
Referring to Definition \ref{PQcondition} and Proposition \ref{closed}, it is easily seen that
$$
x^N(ax,a/x;q,p)_k(cxq^k,cq^k/x;q,p)_{N-k}
$$
is an elliptic Askey-Wilson polynomial of degree $N$. Assume further that
\begin{align}
x^N(ax,a/x;q,p)_k(cxq^k,cq^k/x;q,p)_{N-k}=\sum_{i=0}^{N}\mu_{N,k;i}P(x)^iQ(x)^{N-i},\label{newtwotwo}
\end{align}
where the coefficients  $\mu_{N,k;i}$ are independent of $x$.
Finally we substitute \eqref{newtwotwo} into \eqref{oneone},  arriving at
\begin{align*}
g_N(x)&=\frac{1}{x^N}\sum_{k=0}^{N}f_k\sum_{i=0}^{N}\mu_{N,k;i}P(x)^iQ(x)^{N-i}\\
&=\frac{1}{x^N}\sum_{i=0}^{N}\bigg(\sum_{k=0}^{N}f_k\mu_{N,k;i}\bigg)P(x)^iQ(x)^{N-i}.
\end{align*}
By choosing $\lambda_i=\sum_{k=0}^{N}f_k\mu_{N,k;i}$, we get  \eqref{oneoneone} of our theorem. For the case $m\leq N$, we only need to replace $g_N(x)$ of \eqref{oneoneone} with
$$
g_m(x)\frac{(cx,c/x;q,p)_N}{(cx,c/x;q,p)_m}=g_m(x)(cq^mx,cq^m/x;q,p)_{N-m}
$$
and apply Lemma \ref{lemma2} to the factor  $(cq^mx,cq^m/x;q,p)_{N-m}$.
Then \eqref{threethreethree} follows. This completes the proof of the theorem.\end{proof}

\begin{remark} By \eqref{oneoneone} and the definitions of  $P(x)$ and $Q(x)$, it is  easily verified that
$g_N(x)$  satisfies \eqref{2-2-22}, viz.,
\begin{align*}
\left\{
   \begin{array}{l}
     g_N(x)=g_N(1/x),\\
    g_N(px)=\displaystyle\frac{1}{p^Nx^{2N}}g_N(x).
   \end{array}
 \right.
\end{align*}
Likewise,   we readily find  that the elliptic Askey-Wilson polynomial $f(x)\in \mathcal{L}_N(P(x),Q(x))$ possesses the properties
\begin{align}
     f(1/x)=f(px)=\displaystyle\frac{1}{x^{2N}}f(x).
\end{align}
\end{remark}

\section{Applications}

In this section, we will pursue some specific cases of our interpolation  formulas, viz., Theorems \ref{mainthmyl-latest}, \ref{mainthm}, and \ref{generialized}.  We begin by establishing   a generalization of   Weierstrass' theta identity. It is  an application of   Theorem \ref{mainthmyl-latest}   with  Theorem \ref{mainthmyl} utilized.

\begin{tl}[Generalized Weierstrass theta identity]\label{dl4.7} Let $\{x_n\}_{n\geq 0}$ and $\{b_n\}_{n\geq 0}$ be two sequences such that $b_i\neq b_j, 0\leq i\neq j\leq N$. Then, for integers $N\geq k\geq 0$, there holds
\begin{align}
&\sum_{n=k}^{N}b_n\frac{\theta(x_nb_n,x_n/b_n;p)}{\theta(x_nb_k,x_n/b_k;p)}\prod_{i=n+1}^{N}
\frac{\theta(x_ix,x_i/x;p)}
{\theta(x_ib_k,x_i/b_k;p)}\label{generalizedweier}\\
&\qquad\quad\times\frac{\prod_{i=0}^{n-1}\theta(b_ix,b_i/x;p)}{\prod_{i=0,i\neq k}^{n}\theta(b_ib_k,b_i/b_k;p)}
=b_k\prod_{i=0,i\neq k}^{N}\frac{\theta(b_ix,b_i/x;p)}{\theta(b_ib_k,b_i/b_k;p)}.\nonumber
\end{align}
\end{tl}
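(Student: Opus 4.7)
The plan is to derive \eqref{generalizedweier} by expanding an arbitrary $f(x)\in\mathcal{L}_N(P(x),Q(x))$ in two different ways---once using the series representation of Theorem \ref{mainthmyl} and once using the Lagrange-type formula of Theorem \ref{mainthmyl-latest}---and then comparing the coefficient of $f(b_k)$ in the two resulting expansions. The corollary is, after all, explicitly advertised as ``an application of Theorem \ref{mainthmyl-latest} with Theorem \ref{mainthmyl} utilized.''

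Concretely, I would take an arbitrary $f(x)\in\mathcal{L}_N(P(x),Q(x))$, substitute the double-sum expression \eqref{coeffi} for $H_n(N)$ into the Theorem \ref{mainthmyl} expansion (noting that $\sum_{j=0}^N\lambda_j P(b_k)^j Q(b_k)^{N-j}=f(b_k)$), divide through by $x^N$, and interchange the order of the two summations. After consolidating the factor $\theta(x_nb_k,x_n/b_k;p)$ together with $\prod_{i=n+1}^{N}\theta(x_ib_k,x_i/b_k;p)$ into $\prod_{i=n}^{N}\theta(x_ib_k,x_i/b_k;p)$, and relabelling the outer index as $k$, the result takes the form $f(x)/x^N=\sum_{k=0}^{N}(f(b_k)/b_k^{N+1})\mathbf{L}_k(x)$, where $\mathbf{L}_k(x)$ is precisely the left-hand side of \eqref{generalizedweier}. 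On the other hand, applying Theorem \ref{mainthmyl-latest} to the same $f$ gives $f(x)/x^N=\sum_{k=0}^{N}(f(b_k)/b_k^{N+1})\mathbf{R}_k(x)$, where $\mathbf{R}_k(x)=b_k\prod_{i=0,i\neq k}^{N}\theta(b_ix,b_i/x;p)/\theta(b_ib_k,b_i/b_k;p)$ is the right-hand side of \eqref{generalizedweier}.

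Equating the two expansions yields $\sum_{k=0}^{N}f(b_k)\bigl(\mathbf{L}_k(x)-\mathbf{R}_k(x)\bigr)/b_k^{N+1}=0$. Since $\mathcal{L}_N(P(x),Q(x))$ has dimension $N+1$ and Theorem \ref{mainthmyl-latest} shows that the evaluation map $f\mapsto(f(b_0),\ldots,f(b_N))$ is injective, it is in fact a linear isomorphism onto $\mathbb{C}^{N+1}$, so the values $f(b_k)$ may be prescribed independently. It follows that $\mathbf{L}_k(x)=\mathbf{R}_k(x)$ for every $k$, which is \eqref{generalizedweier}. The main obstacle will be the careful bookkeeping of the summation swap---verifying that the inner sum obtained after swapping reproduces exactly the left-hand side of the statement under the identification $\theta(x_nb_k,x_n/b_k;p)\prod_{i=n+1}^{N}\theta(x_ib_k,x_i/b_k;p)=\prod_{i=n}^{N}\theta(x_ib_k,x_i/b_k;p)$. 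The dimension/injectivity argument that closes the proof is then routine.
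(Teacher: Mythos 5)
Your proposal is correct and follows essentially the same route as the paper: expand an arbitrary $f(x)\in\mathcal{L}_N(P(x),Q(x))$ via Theorem \ref{mainthmyl} with the coefficients \eqref{coeffi}, interchange the two summations, compare the coefficient of $f(b_k)$ with that in the Lagrange-type expansion of Theorem \ref{mainthmyl-latest}, and conclude by the arbitrariness of $f$. Your bookkeeping of the swap and the factor $\theta(x_nb_k,x_n/b_k;p)\prod_{i=n+1}^{N}\theta(x_ib_k,x_i/b_k;p)=\prod_{i=n}^{N}\theta(x_ib_k,x_i/b_k;p)$ matches the paper's $\Omega_{k,N}$ exactly, and your explicit justification of the coefficient-comparison step via the injectivity of the evaluation map is a small but welcome sharpening of the paper's ``arbitrariness'' remark.
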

\begin{proof} It suffices to consider two expansions for any $f(x)\in \mathcal{L}_N(P(x),Q(x))$. At first, according  to Theorem \ref{mainthmyl}, we have the expansion
\begin{align*}\frac{f(x)}{x^N}=
\sum_{n=0}^{N}H_n(N)b_n\theta(x_nb_n,x_n/b_n;p)
\prod_{i=0}^{n-1}\theta(b_ix,b_i/x;p)\prod_{i=n+1}^{N}\theta(x_ix,x_i/x;p),\nonumber
\end{align*}
where the coefficients
\begin{align*}
H_n(N)&:=\sum_{k=0}^{n}\frac{f(b_k)}{b_k^{N+1}}\prod_{i=n}^{N}\frac{1} {\theta(x_ib_k,x_i/b_k;p)}\prod_{i=0,i\neq k}^{n}\frac{1} {\theta(b_ib_k,b_i/b_k;p)}.
\end{align*}
By exchanging the order of summations, we have
\begin{align}
\frac{f(x)}{x^N}=\sum_{k=0}^{N}\frac{f(b_k)}{b_k^{N}}\Omega_{k,N},\label{expansion-one}
\end{align}
where
\begin{align*}
\Omega_{k,N}:=\frac{1}{b_k}\sum_{n=k}^{N}b_n\theta(x_nb_n,x_n/b_n;p)\frac{\prod_{i=0}^{n-1}\theta(b_ix,b_i/x;p)\prod_{i=n+1}^{N}\theta(x_ix,x_i/x;p)}{\prod_{i=n}^{N}\theta(x_ib_k,x_i/b_k;p)\prod_{i=0,i\neq k}^{n}\theta(b_ib_k,b_i/b_k;p)}.
\end{align*}
On the other hand, from Theorem \ref{mainthmyl-latest} it follows that
\begin{align}
\frac{f(x)}{x^N}=\sum_{k=0}^{N}\frac{f(b_k)}{b_k^N}\prod_{i=0,i\neq k}^{N}\frac{\theta(b_ix,b_i/x;p)}{\theta(b_ib_k,b_i/b_k;p)}.\label{expansion-two}
\end{align}
By comparing  \eqref{expansion-one} and \eqref{expansion-two} and taking the arbitrariness of $f(x)$ into account, we have
\begin{align}
\Omega_{k,N}=\prod_{i=0,i\neq k}^{N}\frac{\theta(b_ix,b_i/x;p)}{\theta(b_ib_k,b_i/b_k;p)}.\label{expansion-three}
\end{align}
Written out in full, \eqref{expansion-three}  is identified with  \eqref{generalizedweier}. The proof is finished.
\end{proof}
We should  make some remarks on the implications of Corollary \ref{dl4.7}.
\begin{remark}
 It is worth mentioning that Corollary \ref{dl4.7}  offers a general version of  Weierstrass' theta identity \eqref{weierstrass}.  To make this clear, we only need to specialize   \eqref{generalizedweier} to the case $N=2$ and $k=1$, obtaining
\begin{multline*}
b_1
\frac{\theta(x_2x,x_2/x;p)}
{\theta(x_2b_1,x_2/b_1;p)}\frac{\theta(b_0x,b_0/x;p)}
{\theta(b_0b_1,b_0/b_1;p)}\\+b_2\frac{\theta(x_2b_2,x_2/b_2;p)}{\theta(x_2b_1,x_2/b_1;p)}
\frac{\theta(b_0x,b_0/x;p)\theta(b_1x,b_1/x;p)}
{\theta(b_0b_1,b_0/b_1;p)\theta(b_2b_1,b_2/b_1;p)}\\
=b_1\frac{\theta(b_0x,b_0/x;p)}{\theta(b_0b_1,b_0/b_1;p)}\frac{\theta(b_2x,b_2/x;p)}{\theta(b_2b_1,b_2/b_1;p)}.
\end{multline*}
After some routine simplification, we have
\begin{align*}
\theta(b_2x,b_2/x,x_2b_1,x_2/b_1;p)-\theta(x_2x,x_2/x,b_2b_1,b_2/b_1;p)=\frac{b_2}{b_1} \theta(x_2b_2,x_2/b_2,b_1x,b_1/x;p).
\end{align*}
It is in agreement with \eqref{weierstrass} after relableling the parameters.
\end{remark}

The case $k=0$ of \eqref{generalizedweier} yields another theta identity being equivalent to \eqref{weierstrass}.
\begin{lz}\label{dl4.77}For any integer $N\geq  0$, there holds
\begin{align}
\sum_{n=0}^{N}b_n\frac{\theta(b_0x,b_0/x,x_nb_n,x_n/b_n;p)}{\theta(x_nb_0,x_n/b_0,b_nx,b_n/x;p)}\prod_{i=n+1}^{N}
\frac{\theta(b_{i}b_0,b_{i}/b_0,x_ix,x_i/x;p)}
{\theta(x_ib_0,x_i/b_0,b_{i}x,b_{i}/x;p)}
=b_0.\label{generalizedweier-333}\end{align}
\end{lz}

In regard to applications of Theorem \ref{mainthm},  two easy cases arise naturally.
\begin{lz} For any integer $N\geq 0$, we have
 \begin{align}
 &\bigg(\frac{C}{x}\bigg)^N\frac{(q,C^2q;q,p)_{N}}{(Cxq,Cq/x;q,p)_{N}}
 \theta^N(-x^2;p^2)\label{AAA}\\
&\qquad=\sum_{k=0}^{N}\frac{\theta(C^2q^{2k};p)} {\theta(C^2;p)}\frac{(C^2,Cx,C/x,q^{-N};q,p)_{k}} {(q,Cxq,Cq/x,C^2q^{N+1};q,p)_{k}}\theta^N(-C^2q^{2k};p^2)q^{k},
\nonumber\\
&\frac{(q,C^2q;q,p)_{N}}{(Cxq,Cq/x;q,p)_{N}}\theta^N(-px^2;p^2)
\label{BBB}\\
&\qquad=\sum_{k=0}^{N}\left[\begin{matrix}N\\ k\end{matrix}\right]_{q,p}\frac{\theta(C^2q^{2k};p)} {\theta(C^2;p)}\frac{(C^2,Cx,C/x;q,p)_{k}} {(Cxq,Cq/x,C^2q^{N+1};q,p)_{k}}\theta^N(-pC^2q^{2k};p^2)\tau_q(k)q^{k}.\nonumber
\end{align}
\end{lz}
\pf It suffices to specialize  \eqref{xe33-3} to the cases
$
f(x)=P(x)^{N}=\theta^N(-x^2;p^2)(-p;p)^N_\infty
$
 and $
f(x)=Q(x)^{N}
=(x\theta(-px^2;p^2)(-p;p)_\infty)^N.
$
Then we have  \eqref{AAA} and   \eqref{BBB} correspondingly.
\qed

As a very good illustration of Theorem  \ref{mainthm},  we prefer to reconsider Example 3.5 of \cite{wangjinpaper}. It gives a special result of Frenkel and Turaev's well-known summation formula \eqref{frenkel-sum} given by Lemma \ref{lemma28}.
\begin{lz} For any integer $N\geq 0$, there holds \begin{align}
{}_{10}V_9(C^2;C/x,Cx,Cq/A,ACq^N,q^{-N};q,p)=\frac{(q,C^2q,Ax,A/x;q,p)_N}{(AC,A/C,Cxq,Cq/x;q,p)_N}.\label{xe33-lz3}
\end{align}
\end{lz}
\pf It only needs to apply  Theorem \ref{mainthm}  to
$
f(x)=x^N(Ax,A/x;q,p)_N,
$
since $f(x)\in \mathcal{L}_N(P(x),Q(x))$ as indicated in Remark \ref{remark1.8} (also see  Lemma \ref{lemma2}).
By making use of the relations \eqref{added-1-1} and \eqref{added-2}, we deduce from \eqref{xe33-3}  that
\begin{align*}
&(Ax,A/x;q,p)_N=\frac{(Cxq,Cq/x;q,p)_{N}}{(q,C^2q;q,p)_N}\\
&\quad\times\sum_{k=0}^{N}\frac{\theta(C^2q^{2k};p)} {\theta(C^2;p)}\frac{(C^2,C/x,Cx,q^{-N};q,p)_{k}} {(q,Cxq,Cq/x,C^2q^{N+1};q,p)_{k}}(ACq^k,Aq^{-k}/C;q,p)_Nq^{k+kN}\\
&\quad=\frac{(Cxq,Cq/x,AC,A/C;q,p)_{N}}{(q,C^2q;q,p)_N}\\
&\quad\times\sum_{k=0}^{N}\frac{\theta(C^2q^{2k};p)} {\theta(C^2;p)}\frac{(C^2,C/x,Cx,q^{-N};q,p)_{k}} {(q,Cxq,Cq/x,C^2q^{N+1};q,p)_{k}}\frac{(ACq^N,Cq/A;q,p)_k}{(Cq^{-N+1}/A,AC;q,p)_k}q^{k},
\end{align*}
which  coincides with  \eqref{xe33-lz3}.
\qed

With the help of  Theorem \ref{mainthm}, we can extend the elliptic Karlsson-Minton type identity \cite[Corollary 2.8]{schlosser}  given by Schlosser and Yoo to the following
\begin{dl}[Generalized  elliptic Karlsson-Minton type identity] \label{karlsson-type} For any $f(x)\in \mathcal{L}_{N_0}(P(x),Q(x))$, we have the expansion
\begin{align}&\bigg(\frac{C}{x}\bigg)^{N_0}\frac{(q,C^2q;q,p)_{m+N_0}}{(Cxq,Cq/x;q,p)_{m+N_0}}~f(x)
\prod_{i=1}^m\theta(A_ix,A_i/x;p)\label{new-final}\\
&=\sum_{k=0}^{m+N_0}q^{k(m+1)}\frac{\theta(C^2q^{2k};p)} {\theta(C^2;p)}\frac{(C^2,C/x,Cx,q^{-(m+N_0)};q,p)_{k}} {(q,Cxq,Cq/x,C^2q^{m+N_0+1};q,p)_{k}}f(Cq^k)\prod_{i=1}^m\theta(A_iCq^k,A_iq^{-k}/C;p).\nonumber
\end{align}
\end{dl}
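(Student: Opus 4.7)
The plan is to derive Theorem \ref{karlsson-type} as a direct specialization of the more general interpolation formula Theorem \ref{generialized}. Specifically, I would set $N_i = 1$ for every $i = 1, 2, \ldots, m$, so that the truncation parameter becomes $N = N_0 + m$. This choice is natural because it produces exactly one theta factor per $A_i$ in the elliptic factorials on both sides, matching the shape of the target identity.

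With $N_i = 1$, the left-hand side of \eqref{xe33-3-3} contains the factor $\prod_{i=1}^m (A_ix, A_i/x; q, p)_1/(A_iC, A_i/C; q, p)_1$, which equals $\prod_{i=1}^m \theta(A_ix, A_i/x; p)/\theta(A_iC, A_i/C; p)$. Multiplying both sides of \eqref{xe33-3-3} by $\prod_{i=1}^m \theta(A_iC, A_i/C; p)$ converts this factor into the $\prod_{i=1}^m \theta(A_ix, A_i/x; p)$ appearing in \eqref{new-final}. On the right-hand side, the same substitution gives $\prod_{i=1}^m (A_iCq, Cq/A_i; q, p)_k/(C/A_i, A_iC; q, p)_k$, which simplifies via the standard relation $(xq; q, p)_k/(x; q, p)_k = \theta(xq^k; p)/\theta(x; p)$ into $\prod_{i=1}^m \theta(A_iCq^k, Cq^k/A_i; p)/\theta(A_iC, C/A_i; p)$.

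The main remaining manipulation, and the one step that must be carried out carefully, is to convert $\theta(Cq^k/A_i; p)$ into $\theta(A_i q^{-k}/C; p)$ using the quasi-periodicity identity $\theta(y; p) = -y\,\theta(1/y; p)$. Applying this with $y = Cq^k/A_i$ gives $\theta(Cq^k/A_i; p) = -(Cq^k/A_i)\,\theta(A_i q^{-k}/C; p)$, and applying it with $y = A_i/C$ gives $\theta(A_i/C; p)/\theta(C/A_i; p) = -A_i/C$. Combining these two reflections, each factor in the product contributes a sign, a factor $(Cq^k/A_i)(A_i/C) = q^k$, and exactly the desired $\theta(A_iCq^k, A_iq^{-k}/C; p)$. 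The sign parities cancel perfectly because each factor contributes $(-1)(-1) = +1$, so the $m$-fold product introduces the accumulated scalar $q^{mk}$ multiplied into the summand.

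Finally, combining the $q^{mk}$ produced by this reflection step with the $q^k$ already present in the summand of \eqref{xe33-3-3} yields the exponent $q^{(m+1)k}$ demanded by \eqref{new-final}, while the remaining theta ratio and elliptic factorial prefactors on the left-hand side match verbatim with $N = N_0 + m$ substituted into $(q, C^2q; q, p)_N/(Cxq, Cq/x; q, p)_N$ and $(C^2, C/x, Cx, q^{-N}; q, p)_k/(q, Cxq, Cq/x, C^2q^{N+1}; q, p)_k$. Thus, apart from the bookkeeping involved in tracking the $q^k$-factors arising from the quasi-periodicity of $\theta$, no new ingredient beyond Theorem \ref{generialized} is needed, and the identity \eqref{new-final} follows at once.
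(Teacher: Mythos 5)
Your proof is correct, and it is essentially the paper's own argument entered through a different door: the paper applies Theorem \ref{mainthm} directly to $F(x)=f(x)\prod_{i=1}^m x\,\theta(A_ix,A_i/x;p)$, whereas you specialize Theorem \ref{generialized} to $N_i=1$ — but since Theorem \ref{generialized} is itself obtained by applying Theorem \ref{mainthm} to $f(x)\prod_{i=1}^m x^{N_i}(A_ix,A_i/x;q,p)_{N_i}$, the two routes coincide. Your bookkeeping with the reflections $\theta(y;p)=-y\,\theta(1/y;p)$, producing the factor $q^{mk}$ and hence the exponent $q^{k(m+1)}$, is carried out correctly.
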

\begin{proof} To establish \eqref{new-final}, we only need to apply Theorem \ref{mainthm} to the elliptic Askey-Wilson polynomial
\begin{align*}
F(x)=f(x)\prod_{i=1}^mx\theta(A_ix,A_i/x;p).
\end{align*}
Note that $F(x)$ is of degree $m+N_0$. The expansion corresponding to \eqref{xe33-3}  becomes
\begin{align*}&\bigg(\frac{C}{x}\bigg)^{m+N_0}\frac{(q,C^2q;q,p)_{m+N_0}}{(Cxq,Cq/x;q,p)_{m+N_0}}
~f(x)\prod_{i=1}^mx\theta(A_ix,A_i/x;p)\\
&=\sum_{k=0}^{m+N_0}\frac{\theta(C^2q^{2k};p)} {\theta(C^2;p)}\frac{(C^2,C/x,Cx,q^{-(m+N_0)};q,p)_{k}} {(q,Cxq,Cq/x,C^2q^{m+N_0+1};q,p)_{k}}q^{k}~ f(Cq^k)\big(Cq^k\big)^{m}\prod_{i=1}^m\theta(A_iCq^k,A_iq^{-k}/C;p).
\end{align*}
A bit simplification leads us to \eqref{new-final}.
\end{proof}
Letting  $N_0=0$ and $f(x)=1$ in Theorem \ref{karlsson-type}, we immediately obtain
\begin{tl}[The elliptic Karlsson-Minton type identity. Cf. {\rm \cite[Corollary 2.8]{schlosser}}] For any integer  $m\geq 0$, there holds
\begin{align}&\frac{(q,C^2q;q,p)_{m}}{(Cxq,Cq/x;q,p)_{m}}\prod_{i=1}^m\theta(A_ix,A_i/x;p)\label{new-final-final}\\
&=\sum_{k=0}^{m}q^{k(m+1)}\frac{\theta(C^2q^{2k};p)} {\theta(C^2;p)}\frac{(C^2,C/x,Cx,q^{-m};q,p)_{k}} {(q,Cxq,Cq/x,C^2q^{m+1};q,p)_{k}}\prod_{i=1}^m\theta(A_iCq^k,A_iq^{-k}/C;p).\nonumber
\end{align}
\end{tl}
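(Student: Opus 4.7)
The plan is to obtain this corollary as an immediate specialization of Theorem \ref{karlsson-type}, which has just been established. Specifically, I would set $N_0 = 0$ and take $f(x) = 1$ in the expansion \eqref{new-final}, and then check that every quantity collapses correctly to the claimed identity.

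To justify this choice, one must verify that the constant function $f(x) \equiv 1$ belongs to $\mathcal{L}_0(P(x), Q(x))$. By Definition \ref{PQcondition}, the space $\mathcal{L}_0(P(x), Q(x))$ consists of homogeneous polynomials of degree $0$ in $P(x)$ and $Q(x)$, that is, the sums $\lambda_0 P(x)^0 Q(x)^0 = \lambda_0$ with $\lambda_0 \in \mathbb{C}$. Hence $f \equiv 1$ is admissible, and in particular $f(Cq^k) = 1$ for every $k$, so the $f$-factor on the right-hand side of \eqref{new-final} trivializes.

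With these choices in hand, I would perform a direct substitution into \eqref{new-final}. Several simplifications occur at once: the prefactor $(C/x)^{N_0}$ collapses to $1$; the parameter $m + N_0$ reduces to $m$ throughout, affecting both the $q,p$-shifted factorials of length $m+N_0$ on the left and the indices $q^{-(m+N_0)}$ and $C^2q^{m+N_0+1}$ on the right; and the summand factor $f(Cq^k)$ disappears. What remains is precisely the identity \eqref{new-final-final}.

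The main (indeed, the only) obstacle is the routine bookkeeping of matching index ranges, shifted factorial lengths, and powers of $q$ on both sides of \eqref{new-final} under the substitution $N_0 = 0$. Since Theorem \ref{karlsson-type} was stated in a form designed to absorb this specialization transparently, no genuine difficulty arises, and the corollary drops out in a single step. As a sanity check one could, alternatively, re-derive the identity directly from Frenkel and Turaev's summation formula (Lemma \ref{lemma28}) by recasting the right-hand side as a terminating $_{10}V_9$, but the specialization route above is the most economical path given the results already in place.
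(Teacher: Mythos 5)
Your proposal is correct and coincides with the paper's own proof, which likewise obtains the corollary by setting $N_0=0$ and $f(x)=1$ in Theorem \ref{karlsson-type}. The extra check that $f\equiv 1$ lies in $\mathcal{L}_0(P(x),Q(x))$ is a harmless (and welcome) addition.
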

Next is a direct consequence of Theorem \ref{karlsson-type} with the choice  $f(x)=Q(x)^{N_0}$.
\begin{tl} For any integers $m, N_0\geq 0$, there holds
\begin{align}\frac{(q,C^2q;q,p)_{m+N_0}}{(Cxq,Cq/x;q,p)_{m+N_0}}
\prod_{i=1}^m\theta(A_ix,A_i/x;p)\nonumber&\\
=\sum_{k=0}^{m+N_0}q^{k(m+N_0+1)}\frac{\theta(C^2q^{2k};p)} {\theta(C^2;p)}\left(\frac{\theta(-pC^2q^{2k};p^2)}{\theta(-px^2;p^2)}\right)^{N_0}&\label{new-final-fianl}\\
\times\frac{(C^2,C/x,Cx,q^{-(m+N_0)};q,p)_{k}} {(q,Cxq,Cq/x,C^2q^{m+N_0+1};q,p)_{k}}\prod_{i=1}^m\theta(A_iCq^k,A_iq^{-k}/C;p).&\nonumber
\end{align}
\end{tl}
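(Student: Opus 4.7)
The plan is to obtain this corollary as an immediate specialization of Theorem \ref{karlsson-type} by choosing the elliptic Askey-Wilson polynomial to be $f(x)=Q(x)^{N_0}$. First I would verify the admissibility of this choice: since $Q(x)\in \mathcal{L}_1(P(x),Q(x))$ by definition, iterated application of Proposition \ref{closed} gives $Q(x)^{N_0}\in \mathcal{L}_{N_0}(P(x),Q(x))$, so Theorem \ref{karlsson-type} applies with $f(x)=Q(x)^{N_0}$ and $f(Cq^k)=Q(Cq^k)^{N_0}$.

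Next I would substitute into \eqref{new-final} and unpack both sides using the explicit form $Q(x)=x\,\theta(-px^2;p^2)(-p;p)_\infty$ given in \eqref{PQdef}. On the left, the factor $(C/x)^{N_0}$ from \eqref{new-final} combines with $Q(x)^{N_0}=x^{N_0}\theta(-px^2;p^2)^{N_0}(-p;p)_\infty^{N_0}$ to produce $C^{N_0}\theta(-px^2;p^2)^{N_0}(-p;p)_\infty^{N_0}$. On the right, each summand acquires a factor $Q(Cq^k)^{N_0}=(Cq^k)^{N_0}\theta(-pC^2q^{2k};p^2)^{N_0}(-p;p)_\infty^{N_0}$.

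The bookkeeping step is then routine: divide both sides by the common factor $C^{N_0}\theta(-px^2;p^2)^{N_0}(-p;p)_\infty^{N_0}$. The $(-p;p)_\infty^{N_0}$ cancels entirely, the $C^{N_0}$ also cancels after pulling the $C^{N_0}$ out of $(Cq^k)^{N_0}$ on the right, the remaining $q^{kN_0}$ merges with the existing $q^{k(m+1)}$ to give $q^{k(m+N_0+1)}$, and the surviving theta ratio collapses to $\bigl(\theta(-pC^2q^{2k};p^2)/\theta(-px^2;p^2)\bigr)^{N_0}$. After these elementary simplifications the identity \eqref{new-final-fianl} emerges exactly as stated.

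There is no genuine obstacle here; the only care required is in tracking the various $x^{N_0}$, $C^{N_0}$, and $q^{kN_0}$ factors generated by $Q(x)^{N_0}$ and $Q(Cq^k)^{N_0}$, and ensuring that the normalization constants $(-p;p)_\infty^{N_0}$ coming from $Q$ cancel symmetrically on both sides so that no trace of them remains in the final formula.
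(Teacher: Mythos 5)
Your proposal is correct and coincides with the paper's own proof, which likewise obtains the corollary by substituting $f(x)=Q(x)^{N_0}$ into Theorem \ref{karlsson-type}; your bookkeeping of the $C^{N_0}$, $q^{kN_0}$, and $(-p;p)_\infty^{N_0}$ factors is exactly the simplification the paper leaves implicit.
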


We conclude our paper with the following elliptic analogue of  Gasper's summation formula for VWP ${}_{6+2m}\phi_{5+2m}$ series. The reader may consult  \cite{gasper} or \cite[Exercise  2.33(i)]{10} for details and  Rosengren and  Warnaar's survey \cite[Eq. (1.3.7)]{warnaar} for its multivariate version.
\begin{tl} For any integers $m\geq 1, N_i\geq 0, N=\sum_{i=1}^{m}N_i$, there holds
\begin{align}
&{}_{8+2m}V_{7+2m}(C^2;C/x,Cx,q^{-N},A_1Cq^{N_1},\ldots,A_mCq^{N_m},Cq/A_1,\ldots,Cq/A_m;q,p)\nonumber\\
&\quad=\frac{(q,C^2q;q,p)_{N}}{(Cxq,Cq/x;q,p)_{N}}
\prod_{i=1}^m\frac{(A_ix,A_i/x;q,p)_{N_i}}{(A_iC,A_i/C;q,p)_{N_i}}.\label{xe33-tl3}
\end{align}
\end{tl}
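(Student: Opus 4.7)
The plan is to read this corollary as a direct specialization of Theorem \ref{generialized}. Since $\mathcal{L}_0(P(x),Q(x))$ consists of the constants, I take $N_0 = 0$ and $f(x) \equiv 1$ in \eqref{xe33-3-3}. With this choice the prefactor $(C/x)^{N_0}$ on the left-hand side disappears, the total degree $N = \sum_{i=0}^m N_i$ in Theorem \ref{generialized} reduces to $N = \sum_{i=1}^m N_i$, and the left-hand side of \eqref{xe33-3-3} becomes literally the left-hand side of \eqref{xe33-tl3}.

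The substantive step is then to rewrite the sum
\begin{align*}
\sum_{k=0}^{N}q^{k}\frac{\theta(C^2q^{2k};p)}{\theta(C^2;p)}\frac{(C^2,C/x,Cx,q^{-N};q,p)_{k}}{(q,Cxq,Cq/x,C^2q^{N+1};q,p)_{k}}\prod_{i=1}^m\frac{(A_iCq^{N_i},Cq/A_i;q,p)_k}{(Cq^{1-N_i}/A_i,A_iC;q,p)_k}
\end{align*}
as a VWP elliptic hypergeometric series in the sense of \eqref{fpfbasic}. I set $a_1 = C^2$ and read off the free upper parameters $C/x, Cx, q^{-N}, A_1Cq^{N_1},\ldots,A_mCq^{N_m}, Cq/A_1,\ldots,Cq/A_m$, a list of length $3+2m$. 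A direct check shows that their complementary parameters $a_1 q / a_i$ coincide with $Cxq, Cq/x, C^2 q^{N+1}, Cq^{1-N_i}/A_i$ and $A_iC$, which are precisely the entries of the denominator. The four suppressed VWP parameters $qC, -qC, q(C^2/p)^{1/2}, -q(C^2/p)^{1/2}$ account, via the standard theta-ratio identity, for the factor $\theta(C^2q^{2k};p)/\theta(C^2;p)$, and the argument $x=1$ in \eqref{fpfbasic} produces the $q^k$ in the summand. The overall upper-parameter count is $5 + (3+2m) = 8+2m$, so the series is exactly ${}_{8+2m}V_{7+2m}$ with the parameters named in the corollary.

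The main (and essentially the only) obstacle is bookkeeping: one must carefully confirm that every ``denominator entry'' produced by Theorem \ref{generialized} matches $a_1 q / a_i$ for some numerator entry, and that the exponent of $q$ and the theta-quotient assemble into the normalization prescribed by \eqref{fpfbasic}. Once that verification is carried out termwise, the corollary is immediate from Theorem \ref{generialized} without any further computation or appeal to Frenkel--Turaev or other elliptic summations.
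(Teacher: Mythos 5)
Your proposal is correct and is exactly the paper's proof: the authors likewise obtain \eqref{xe33-tl3} by setting $N_0=0$ and $f(x)=1$ in Theorem \ref{generialized} and recognizing the resulting sum as the ${}_{8+2m}V_{7+2m}$ series (the paper leaves the parameter bookkeeping implicit, which you spell out correctly). The only nitpick is that with this specialization the left-hand side of \eqref{xe33-3-3} becomes the \emph{right}-hand side of \eqref{xe33-tl3} and vice versa, but this is immaterial.
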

\pf It is a direct consequence of Theorem \ref{generialized} by taking   $f(x)=1$ (i.e., $N_0=0$) in  \eqref{xe33-3-3}.
\qed
%Note that by putting $m=2$ in \eqref{xe33-tl3}, we obtain Corollary 2.7 in  \cite{schlosser} by   Schlosser and Yoo.
%In addition, the special case that $m=1$ of \eqref{xe33-3-3} in  Theorem \ref{generialized}  is clearly different from  Schlosser and Yoo's interpolation formula \eqref{schlosseryoo}.

%%%%%%%%%%%%%%%%%%%%%%%%%%%%%%%%%%%%%%%%%%%%%%%%%%%%%%%%%%%%%%%%%%%
\end{document}